\newtheorem{Theorem}{Theorem}[section]
\newtheorem{Assumption}[Theorem]{Assumption}
\newtheorem{Lemma}[Theorem]{Lemma}
\newtheorem{Corollary}[Theorem]{Corollary}
\newtheorem{Remark}[Theorem]{Remark}
\theoremstyle{thmstylethree}%
\begin{document}

\title[ ]{Strong convergence of a fully discrete scheme for stochastic Burgers equation with fractional-type noise}


\author[1]{\fnm{Yibo} \sur{Wang}}\email{wangyb@seu.edu.cn}

\author*[1]{\fnm{Wanrong} \sur{Cao}}\email{wrcao@seu.edu.cn}

\affil[1]{\orgdiv{School of Mathematics}, \orgname{Southeast University}, \orgaddress{ \city{Nanjing}, \postcode{210096}, \country{People's Republic of China}}}


\abstract{We investigate numerical approximations for the stochastic Burgers equation driven by an additive cylindrical fractional Brownian motion with Hurst parameter $H \in (\frac{1}{2}, 1)$. To discretize the continuous problem in space, a spectral Galerkin method is employed, followed by the presentation of a nonlinear-tamed accelerated exponential Euler method to yield a fully discrete scheme. By showing the exponential integrability of the stochastic convolution of the fractional Brownian motion, we present the boundedness of moments of semi-discrete and full-discrete approximations. Building upon these results and the convergence of the fully discrete scheme in probability proved by a stopping time technique, we derive the strong convergence of the proposed scheme. }

\keywords{stochastic Burgers equation, fractional Brownian motion, tamed exponential Euler method, strong convergence, non-globally monotone nonlinearity }


\pacs[MSC Classification]{60H35, 65C30, 60H15}

\maketitle

\section{Introduction}	
The fractional Brownian motion (fBm)  is a centered Gaussian process characterized by stationary increments and self-similarity with Hurst parameter $H \in (0,1)$ \cite{Mandelbrot1968,Biagini2008,Duncan2002}. Due to its capability to depict correlated fluctuations,  the fBm finds appreciable applications in various fields like finance, biology, and communication engineering. In particular, it was introduced by Kolmogorov \cite{Kolmogorov1940} to model turbulence in liquids, and 
it was later incorporated in a general formalism to describe turbulent wave-front phase \cite{Perez2004}.  When $\frac{1}{2} < H < \frac{5}{6}$, the formalism extends to non-Kolmogorov turbulence. Meanwhile, the stochastic Burgers equation (SBE) was discovered to have statistical characteristics similar to experimental investigations of turbulence \cite{Chekhlov1995,Jeng1969}. 
The inclusion of the adjustable Hurst parameter in the SBE endows it with a powerful ability to model complex turbulence phenomena, and thus, the SBE driven by fBm  has aroused widespread research interest.

Denote $\mathcal{D}:= (0,1)$ and $U := L^2(\mathcal{D})$. In this paper, we focus on numerical methods for the following SBE driven by a cylindrical fBm
\begin{equation}\label{Burgers}
	\left\{\begin{aligned}
		&d u(t) = \left( -Au(t) + f(u(t)) \right) dt + d B^H(t), \quad t \in (0, T], \\
		&u(0) = u_0 \in U, 
	\end{aligned}\right.
\end{equation}
where $-A: \text{dom}(A) \subset U \rightarrow U$ is the Laplacian with homogeneous Dirichlet boundary conditions, $f: L^4(\mathcal{D}) \rightarrow H^{-1}(\mathcal{D})$ is a deterministic mapping given by $f(u)(x) := u(x) \frac{\partial}{\partial x} u(x)$, and $B^H(t)$ is a standard cylindrical fBm on the stochastic basis $(\Omega, \mathcal{F}, \mathcal{F}_t, \mathbb{P})$, defined by a formal series
\begin{equation}\label{fBm}
	B^H(t) := \sum_{k=1}^{\infty} w_k^H(t) \phi_k (x) , \quad t \in [0,T].
\end{equation}
In \eqref{fBm}, $\{ w_k^H(t) \}_{k\in\mathbb{N}}$ is a sequence of independent real-valued standard fBms with the same Hurst parameter $H \in (\frac{1}{2}, 1)$,  and $\{\phi_{k}\}_{k \in \mathbb{N}}$ is an orthonormal basis of the Hilbert space $L^2(\mathcal{D})$. Note that the scalar fBm $w_k^H(t)$ with $H > \frac{1}{2}$ is neither a Markov process	nor a semi-martingale, but a centered Gaussian process with continuous sample paths and covariance function
$R(t,s) = \frac{1}{2} \left(s^{2H} + t^{2H} - |t-s|^{2H} \right)$. 
If taking $H=\frac{1}{2}$, the cylindrical fBm $B^H(t)$ becomes the cylindrical Brownian motion that has been widely used to describe uncorrelated random perturbations.

The well-posedness has been established for the SBE with cylindrical Brownian motion \cite{Prato1995,Prato1994} and with cylindrical fBm \cite{Wang2010}. However, the solution of the SBE is a stochastic field that can hardly be given explicitly,  so it is crucial to develop an effective numerical method to solve \eqref{Burgers}.  Various numerical methods such as the finite difference method \cite{Hairer2011,Alabert2006} and the Galerkin approximation \cite{Blomker2013-SIAM,Blomker2013-IMA,Khan2021} have been considered in recent years for SBEs driven by cylindrical Brownian motion. Nevertheless, the convergence in the aforementioned works was analyzed in the pathwise sense. In general, convergence in the strong sense is a more powerful and natural measure to express the convergence properties of numerical solutions by averaging the pathwise error over all sample paths. To the best of our knowledge, there has been little work on the strong convergence of numerical methods for SBEs.  In previous studies, the strong convergence of a truncated exponential Euler-type scheme and a Wong--Zakai approximation for the SBE driven by cylindrical Brownian motion were proposed in \cite{Jentzen2017} and \cite{Gugg2002}, respectively. Additionally, for the SBEs with additive and multiplicative trace-class noises, the exponential integrability and strong convergence rates of some fully discrete schemes were obtained \cite{Hutzenthaler2019,Hutzenthaler2022}.

The global monotonicity condition is vital to the strong convergence analysis for numerical schemes. Several fruitful results have been achieved so far on numerical methods for stochastic partial differential equations (SPDEs) with monotone drift. 
On this topic, we refer to \cite{Becker2023,Becker2019,Brehier2019,Cai2021,Wang2020,Liu2021} and references therein. Unfortunately, the nonlinear term $\displaystyle uu_x$ in the SBE, and also in other SPDEs such as the stochastic Kuramoto--Sivashinsky equation \cite{Hutzenthaler2018,Wu2018} and the stochastic Navier--Stokes equation \cite{Carelli2012,Kotelenez1995}, does not fall into this category. This factor poses significant challenges to the strong convergence analysis of the numerical methods.  

Given significant applications in turbulence modeling, this paper aims to construct an explicit fully discrete scheme for the SBE driven by a cylindrical fBm \eqref{Burgers} with a particular focus on the strong convergence of the numerical method. Specifically, the spectral Galerkin method is utilized for spatial discretization and a tamed accelerated exponential Euler method is used for temporal discretization. To prove the strong convergence of the proposed scheme, the convergence in probability is first examined using the stopping time technique. Then, proving the strong convergence is converted to show the moment boundedness of numerical solutions. The regularity properties of the stochastic convolution of cylindrical fBm are presented in Section \ref{Sec:semi-discrete}, drawing inspiration from the Kolmogorov test in \cite{Prato2014}. This, together with Fernique's theorem \cite{Stroock2011}, is used to develop the exponential integrability of the stochastic convolution and the moment boundedness of numerical solutions.


The paper is structured as follows. In Section \ref{Sec:preliminary}, we introduce some important notions and estimates that will be used later. Additionally, we provide some pathwise estimates for the mild solution of the SBE. Section \ref{Sec:semi-discrete} presents the spatial discretization of the continuous problem using the spectral Galerkin method. Furthermore, we discuss the regularity properties of the stochastic convolution, in particular, the exponential integrability. The strong convergence of the semi-discretization is also proved in this section, followed by the strong convergence of the full discretization in Section \ref{Sec:full-discrete}. We present a numerical experiment in Section \ref{Sec:numerical example} to verify the theoretical results. Lastly, we will summarize and discuss the findings in Section \ref{Sec:conclusion}.

\section{Preliminary}\label{Sec:preliminary}
Recall that $\mathcal{D}:= (0,1)$. 
We denote by $\left( L^p(\mathcal{D}; \mathbb{R}), \|\cdot\|_{L^p(\mathcal{D})} \right)$, $1 \leq p \leq \infty$, or shortly, $\left( L^p(\mathcal{D}), \|\cdot\|_{L^p} \right)$, the Banach space consisting of all $p$-times integrable functions. 
When $p=\infty$, $L^\infty(\mathcal{D})$ contains all functions which are essentially bounded.  
When $p=2$, we denote by $U$ the Hilbert space $L^2(\mathcal{D})$ with the scalar product $\langle \cdot, \cdot \rangle$ and the norm $\|\cdot\|$. 
For a separable Hilbert space $\left( \mathcal{V}, \langle \cdot , \cdot \rangle_{\mathcal{V}}, \|\cdot\|_{\mathcal{V}} \right)$, we denote by $\mathcal{L}(\mathcal{V})$ the space consisting of all bounded linear operators on $\mathcal{V}$ endowed with the induced operator norm $\|\cdot\|_{\mathcal{L}(\mathcal{V})}$. 
A bounded linear operator $T \in \mathcal{L}(\mathcal{V})$ is called Hilbert--Schmidt if $\sum_{k \in \mathbb{N}} \| T \psi_k \|^2_{\mathcal{V}} < \infty$, where $\{\psi_k\}_{k \in \mathbb{N}}$ is an arbitrary orthonormal basis of $\mathcal{V}$. 
The space of all Hilbert--Schmidt operators on $\mathcal{V}$, denoted by $\mathcal{L}_2(\mathcal{V})$, equipped with the norm $\|T\|_{\mathcal{L}_2(\mathcal{V})} := \left( \sum_{k \in \mathbb{N}} \| T \psi_{k} \|^2_{\mathcal{V}} \right)^{\frac{1}{2}}$, is a separable Hilbert space with the scalar product $\langle T_1, T_2 \rangle_{\mathcal{L}_2(\mathcal{V})} := \sum_{k \in \mathbb{N}} \langle T_1 \psi_{k} , T_2 \psi_{k} \rangle_{\mathcal{V}}$. 

In what follows by $C$ we denote the generic positive constant which may be different from line to line but is always independent of the discretization parameters. Sometimes we use $C(\cdot)$ to emphasize its dependence on the parameters in brackets. 

There exist an increasing sequence of positive numbers $\lambda_{k}$, such that $A \phi_k = \lambda_k \phi_k$, where $\{\phi_{k}\}_{k \in \mathbb{N}}$ form an orthonormal basis of $U$. 
By using the fractional power of $A$, we introduce the Hilbert space $\dot{H}^\gamma$, $\gamma \in \mathbb{R}$, endowed with the inner product $\langle \cdot , \cdot \rangle_{\gamma} := \langle A^{\frac{\gamma}{2}} \cdot , A^{\frac{\gamma}{2}} \cdot \rangle$ and the norm $\| \cdot \|_{\dot{H}^\gamma} := \langle \cdot , \cdot \rangle_{\gamma}^{\frac{1}{2}}$. 
The operator $-A$ generates an analytic semigroup $S(t) = \exp(-tA)$, which satisfies 
\begin{equation}\label{Est:semigroup 1}
	\begin{aligned}	
		\left\| A^{\gamma} S(t) \right\|_{\mathcal{L}(U)} &\leq C t^{-\gamma} , \quad t > 0, \ \gamma \geq 0,  \\
		\left\| A^{-\rho} (I - S(t)) \right\|_{\mathcal{L}(U)} &\leq C t^{\rho} , \quad t > 0, \ \rho \in [0,1]. 
	\end{aligned}
\end{equation}
In addition, for any $s_1 \leq s_2$ in $\mathbb{R}$ and $r \geq 1$, it holds that 
\begin{equation}\label{Est:semigroup 2}
	\| S(t)u \|_{W^{s_2 , r}} \leq C t^{\frac{s_1-s_2}{2}} \| u \|_{W^{s_1 , r}},  \quad \text{for all} \  u \in W^{s_1 , r}. 
\end{equation}
The constant $C$ in \eqref{Est:semigroup 2} depends only on $s_1$, $s_2$, and $r$ (see, e.g., \cite{Rothe1984,Prato1994}). For $\zeta \in (0, \frac{1}{2})$, by the semigroup property of $S(t)$ and the Sobolev embedding inequality $W^{\frac{1}{2}, 1}(\mathcal{D}) \hookrightarrow L^2(\mathcal{D})$, taking $\gamma = \frac{\zeta}{2}$ in \eqref{Est:semigroup 1} and $s_1 = -1$, $s_2 = \frac{1}{2}$, $r = 1$ in \eqref{Est:semigroup 2}, we have
\begin{equation}\label{Est:S B 1}
	\|S(t)u\|_{\dot{H}^{\zeta}} 
	= \left\|A^{\frac{\zeta}{2}} S\left(\tfrac{t}{2}\right) S\left(\tfrac{t}{2}\right) u \right\| 
	\leq C(\zeta) t^{-\frac{\zeta}{2}} \left\| S\left(\tfrac{t}{2}\right) u \right\|_{W^{\frac{1}{2}, 1}} \leq C(\zeta) t^{-\frac{\zeta}{2}-\frac{3}{4}} \left\| u \right\|_{W^{-1, 1}}. 
\end{equation} 
Similarly, taking $\gamma = \frac{\zeta}{2}$ in \eqref{Est:semigroup 1} and $s_1 = -1$, $s_2 = 0$, $r = 2$ in \eqref{Est:semigroup 2}, we get
\begin{equation}\label{Est:S B 2}
	\|S(t)u\|_{\dot{H}^{\zeta}} 
	= \left\|A^{\frac{\zeta}{2}} S\left(\tfrac{t}{2}\right) S\left(\tfrac{t}{2}\right) u \right\| 
	\leq C(\zeta) t^{-\frac{\zeta}{2}} \left\| S\left(\tfrac{t}{2}\right) u \right\|
	\leq C(\zeta) t^{-\frac{\zeta}{2} -\frac{1}{2}} \left\| u \right\|_{W^{-1, 2}}.  
\end{equation} 
By denoting $\phi (y) := H(2H-1) |y|^{2H-2}$, $ y \in \mathbb{R}$, we quote from \cite[Lemma 3.6]{Wang2017} the following lemma related to the regularity property of the semigroup $S(t)$. 
\begin{Lemma}\label{Le:S(t)}
	There exists a constant $C$ only depending on the parameter $H$, such that  
	\begin{equation*}
		\int_{s}^{t} \int_{s}^{t} \left\langle A^{\vartheta} S(t-\mu) \psi , A^{\vartheta} S(t-\nu) \psi \right\rangle \phi(\mu-\nu) d\mu d\nu 
		\leq C (t-s)^{2(H-\vartheta)} \|\psi\|^2 , \quad \vartheta \in [0,H]  
	\end{equation*}
	holds for $0 < s \leq t$ and $\psi \in U$. 
\end{Lemma}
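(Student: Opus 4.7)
The plan is to diagonalize in the eigenbasis $\{\phi_k\}_{k\in\mathbb{N}}$ of $A$ and reduce the estimate to a scalar bound for each spectral mode. Writing $\psi=\sum_k\psi_k\phi_k$ with $\|\psi\|^2=\sum_k\psi_k^2$ and using $A^{\vartheta}S(t-\mu)\phi_k=\lambda_k^{\vartheta}e^{-(t-\mu)\lambda_k}\phi_k$, the inner product in the integrand becomes
\begin{equation*}
\bigl\langle A^{\vartheta}S(t-\mu)\psi,\,A^{\vartheta}S(t-\nu)\psi\bigr\rangle=\sum_{k\in\mathbb{N}}\psi_k^{2}\,\lambda_k^{2\vartheta}\,e^{-\lambda_k(2t-\mu-\nu)}.
\end{equation*}
After the substitution $\tau=t-\mu$, $\sigma=t-\nu$ (which maps $[s,t]^2$ onto $[0,T]^2$ with $T:=t-s$), the left-hand side of the lemma equals $\sum_k\psi_k^{2}\,I_k(T)$, where
\begin{equation*}
I_k(T):=\lambda_k^{2\vartheta}\int_0^{T}\!\!\int_0^{T} e^{-\lambda_k(\tau+\sigma)}\,\phi(\tau-\sigma)\,d\tau\,d\sigma .
\end{equation*}
It therefore suffices to prove $I_k(T)\leq C\,T^{2(H-\vartheta)}$ uniformly in $k$, since summation against $\psi_k^{2}$ then completes the bound with $C$ independent of $\psi$.

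Next I would rescale. Setting $\tau=T\tilde\tau,\,\sigma=T\tilde\sigma$ and using the $(2H-2)$-homogeneity $\phi(T\cdot)=T^{2H-2}\phi(\cdot)$, with $\alpha:=T\lambda_k\geq 0$ one obtains
\begin{equation*}
I_k(T)=T^{2(H-\vartheta)}\,g(\alpha),\qquad g(\alpha):=\alpha^{2\vartheta}J(\alpha),\qquad J(\alpha):=\int_0^{1}\!\!\int_0^{1} e^{-\alpha(\tilde\tau+\tilde\sigma)}\phi(\tilde\tau-\tilde\sigma)\,d\tilde\tau\,d\tilde\sigma.
\end{equation*}
The whole problem is thus reduced to showing that $g(\alpha)$ is bounded on $[0,\infty)$, uniformly in $\vartheta\in[0,H]$. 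On any bounded $\alpha$-interval this is immediate: $\phi\in L^{1}([-1,1])$ because $2H-2>-1$, so $J(\alpha)\leq J(0)<\infty$ and $g$ is continuous with $g(0)=0$ when $\vartheta>0$ and $g(0)=J(0)$ when $\vartheta=0$.

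For large $\alpha$ I would establish the sharp decay $J(\alpha)\leq C\alpha^{-2H}$, which together with the constraint $\vartheta\leq H$ yields $g(\alpha)\leq C\alpha^{2(\vartheta-H)}\leq C$. Exploiting the symmetry of the integrand in $(\tilde\tau,\tilde\sigma)$ and setting $r=\tilde\tau-\tilde\sigma$,
\begin{equation*}
J(\alpha)=2H(2H-1)\int_0^{1} e^{-2\alpha\tilde\tau}\int_0^{\tilde\tau} e^{\alpha r}\,r^{2H-2}\,dr\,d\tilde\tau .
\end{equation*}
The inner integral is then bounded by $C\alpha^{1-2H}$ uniformly in $\tilde\tau\in[0,1]$: I would split at the natural length scale $r=1/\alpha$, estimating the singular part by $\int_0^{1/\alpha} r^{2H-2}\,dr\lesssim \alpha^{1-2H}$ (with $e^{\alpha r}\leq e$), and the regular part by an integration by parts which pulls out one factor of $\alpha^{-1}$ from $e^{\alpha r}$ and leaves a term of the same order at the boundary $r=1/\alpha$. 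Integrating the resulting uniform bound against $\int_0^{\infty} e^{-2\alpha\tilde\tau}\,d\tilde\tau=(2\alpha)^{-1}$ produces an extra $\alpha^{-1}$, giving the claimed $J(\alpha)\leq C\alpha^{-2H}$.

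The main obstacle will be extracting the correct exponent $\alpha^{-2H}$ from the inner integral: the singularity $r^{2H-2}$ at the origin combined with the growing factor $e^{\alpha r}$ rules out any crude domination, so the split at the scale $r\sim 1/\alpha$ is essential and must be performed with care to track constants depending only on $H$. Everything else is routine: diagonalization, rescaling, and the observation that the hypothesis $\vartheta\leq H$ is precisely what makes $\alpha^{2\vartheta-2H}$ non-increasing for $\alpha\geq 1$. Once $I_k(T)\leq CT^{2(H-\vartheta)}$ has been verified with $C=C(H)$ independent of $k$, summing against $\psi_k^{2}$ delivers the lemma.
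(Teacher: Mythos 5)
The paper does not prove this lemma at all --- it is quoted verbatim from Lemma 3.6 of \cite{Wang2017} --- so your proposal supplies an argument where the paper offers only a citation. Your overall architecture is sound: the diagonalization in the eigenbasis, the reduction to the scalar quantities $I_k(T)$, the scaling identity $I_k(T)=T^{2(H-\vartheta)}g(\alpha)$ with $\alpha=T\lambda_k$, and the observation that the hypothesis $\vartheta\le H$ is exactly what keeps $\alpha^{2\vartheta}J(\alpha)$ bounded for large $\alpha$ are all correct and do lead to the stated result.

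However, the key estimate is wrong as written. You claim that the inner integral $\int_0^{\tilde\tau}e^{\alpha r}r^{2H-2}\,dr$ is bounded by $C\alpha^{1-2H}$ uniformly in $\tilde\tau\in[0,1]$. This is false: for $\tilde\tau\gg 1/\alpha$ the contribution from $r$ near $\tilde\tau$ is of order $\alpha^{-1}e^{\alpha\tilde\tau}\tilde\tau^{2H-2}$, which is exponentially large in $\alpha$ (take $\tilde\tau=1$). Your integration by parts does produce a harmless boundary term of order $\alpha^{1-2H}$ at $r=1/\alpha$, but it also produces the boundary term $\alpha^{-1}e^{\alpha\tilde\tau}\tilde\tau^{2H-2}$ at the upper limit $r=\tilde\tau$, which you have dropped and which cannot be absorbed into $C\alpha^{1-2H}$. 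The repair is to \emph{not} decouple the inner integral from the damping factor $e^{-2\alpha\tilde\tau}$ of the outer one: since $0\le r\le\tilde\tau$ on the domain of integration, $e^{-2\alpha\tilde\tau}e^{\alpha r}\le e^{-\alpha\tilde\tau}$, whence
\begin{equation*}
J(\alpha)\le 2H(2H-1)\int_0^1 e^{-\alpha\tilde\tau}\int_0^{\tilde\tau}r^{2H-2}\,dr\,d\tilde\tau
=2H\int_0^1 e^{-\alpha\tilde\tau}\,\tilde\tau^{2H-1}\,d\tilde\tau
\le 2H\,\Gamma(2H)\,\alpha^{-2H},
\end{equation*}
which is exactly the decay you need. (Equivalently, integrate in the other order, or bound $\int_0^{1-\tilde\sigma}e^{-\alpha r}r^{2H-2}\,dr\le\Gamma(2H-1)\alpha^{1-2H}$ after substituting $r=\tilde\tau-\tilde\sigma$ with $\tilde\sigma$ held fixed.) With this one correction the rest of your argument goes through and yields the lemma with $C=C(H)$.
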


Next, we introduce the definition of stochastic integral with respect to the cylindrical fBm \eqref{fBm} that was used in \cite{Wang2017,Duncan2002}.
Assume that $g(\cdot) \psi \in L^p(0,T;U)$, $\forall \, \psi \in U$, $p \geq 2$, and 
\begin{equation*}
	\int_{0}^{T} \int_{0}^{T} \|g(u)\|_{\mathcal{L}_2(U)}  \|g(v)\|_{\mathcal{L}_2(U)}   \phi(u-v) du dv < \infty. 
\end{equation*}
The stochastic integral is defined by 
\begin{equation}\label{def:int fBm}
	\int_{0}^{T} g(s) d B^H(s) := \sum_{k=1}^{\infty} \int_{0}^{T} g(s) \phi_k d w_k^H(s), 
\end{equation}
where the summation is  taken in the mean-square sense. 
The series in \eqref{def:int fBm} is a zero mean, $U$-valued Gaussian random variable and satisfies the following It\^o's isometry: 
\begin{equation*}
	\mathbb{E} \left[\left\| \int_{0}^{T} g(s) dB^H(s) \right\|^2\right] = \int_{0}^{T} \int_{0}^{T} \left\langle g(u) , g(v) \right\rangle_{\mathcal{L}_2(U)} \phi(u-v) du dv. 
\end{equation*}
We denote by $\mathcal{O}_t$, $t \in [0,T]$ the stochastic convolution of the cylindrical fBm, given by 
\begin{equation*}
	\mathcal{O}_t := \int_{0}^{t} S(t-s) d B^H(s).  
\end{equation*}
\begin{Assumption}\label{Asp:initial value}
	Let the initial value $u_0$ be an $\mathcal{F}_0/\mathcal{B(U)}$-measurable random variable. Assume that for $\frac{1}{4} \leq \varrho \leq \frac{3}{8}$, $\varrho - 2\theta \geq \frac{1}{4}$, $\theta < \frac{1}{8}$, and sufficiently large positive integer $p$, 
	\begin{equation*}
		\mathbb{E} \left[\|u_0\|^p\right] + \mathbb{E} \left[\|u_0\|^2_{\dot{H}^{\varrho}}\right] + \mathbb{E} \left[\|u_0\|^8_{\dot{H}^{1/4}}\right] + \mathbb{E} \left[\|u_0\|^4_{\dot{H}^{1/4+4\theta}}\right] < \infty. 
	\end{equation*}
\end{Assumption}
The following theorem, shown in \cite{Wang2010}, presents the well-posedness of the SBE \eqref{Burgers}. 
\begin{Theorem}
	Under Assumption \ref{Asp:initial value}, the SBE \eqref{Burgers} possesses a unique mild solution $u \in C([0,T]; L^p(\mathcal{D}))$, $p \geq 2$, determined by 
	\begin{equation}\label{Eq:mild solution u1}
		u(t) = S(t) u_0 + \int_{0}^{t} S(t-s) f(u(s)) ds + \mathcal{O}_t, \quad \mathbb{P}\text{-a.s.}. 
	\end{equation}
\end{Theorem}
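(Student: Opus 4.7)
My plan is to follow the Da Prato--Debussche style decomposition, writing $u = v + \mathcal{O}$ with $\mathcal{O}_t = \int_0^t S(t-s) dB^H(s)$, and reducing the problem to a pathwise analysis of the random PDE
\begin{equation*}
  v(t) = S(t) u_0 + \int_0^t S(t-s) f\bigl(v(s) + \mathcal{O}_s\bigr)\, ds.
\end{equation*}
Once the regularity of $\mathcal{O}$ is known, everything else is deterministic in $\omega$, which is essential because no classical It\^o calculus is available for the driving noise when $H > \tfrac{1}{2}$.

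The first step is to establish the sample-path regularity of $\mathcal{O}$. Applying the It\^o-type isometry for integrals against the cylindrical fBm mode by mode and invoking Lemma~\ref{Le:S(t)} with $\vartheta$ slightly less than $H$, together with the eigenvalue decay $\lambda_k \sim k^2$ of $A$ to ensure convergence of the modal sum, one obtains $\mathbb{E}\|\mathcal{O}_t - \mathcal{O}_s\|_{\dot H^\gamma}^2 \leq C(t-s)^{2(H-\gamma-\varepsilon)}$ for a suitable $\gamma$. Because $\mathcal{O}_t$ is Gaussian this upgrades to all $p$-th moments by Fernique, and Kolmogorov's continuity criterion \cite{Prato2014} then yields a continuous version of $\mathcal{O}$ with values in $\dot H^\gamma$; by Sobolev embedding $\mathcal{O} \in C([0,T]; L^p(\mathcal{D}))$ for arbitrarily large $p$.

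Next I would solve the random PDE for $v$ pathwise. Using $f(w) = \tfrac{1}{2}\partial_x(w^2)$ and the smoothing estimates \eqref{Est:S B 1}--\eqref{Est:S B 2}, one controls
\begin{equation*}
  \left\| \int_0^t S(t-s)\, f(w(s))\, ds \right\|_{L^p} \leq C \int_0^t (t-s)^{-\alpha} \|w(s)\|_{L^{2p}}^2\, ds
\end{equation*}
with some $\alpha < 1$. A Banach fixed-point argument on a small ball of $C([0,\tau]; L^p(\mathcal{D}))$ then yields a local mild solution for each $\omega$. To rule out blow-up and extend it to all of $[0,T]$, I would derive an a priori $L^p$ bound on $v$: multiplying the formal equation for $v$ by $|v|^{p-2}v$, the pure Burgers contribution $\int_\mathcal{D} v\partial_x v \cdot |v|^{p-2}v$ vanishes by integration by parts under Dirichlet boundary conditions, and the cross terms involving $\mathcal{O}$ are absorbed via the regularity established in the previous step. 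Uniqueness then follows from a similar Gronwall estimate for the difference $z = u^{(1)}-u^{(2)}$, writing $f(u^{(1)})-f(u^{(2)}) = \tfrac{1}{2}\partial_x\bigl((u^{(1)}+u^{(2)})z\bigr)$ and again exploiting the $S(t-s)\partial_x$ smoothing.

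\textbf{The main obstacle} is the nonlinearity: $f$ is neither globally Lipschitz nor one-sided Lipschitz, so Picard iteration cannot close on all of $[0,T]$ without an independent a priori bound, and the cancellation $\int_\mathcal{D} w\partial_x w \cdot |w|^{p-2}w = 0$ carries the entire global existence argument. The added difficulty here is that, because the driving noise is fractional rather than Wiener, all moment control on $\mathcal{O}$ must be obtained through the fBm isometry combined with Lemma~\ref{Le:S(t)}, rather than the martingale inequalities used in the Brownian case.
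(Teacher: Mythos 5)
The paper does not actually prove this statement: it is imported verbatim from the reference [Wang, Zeng, Guo, J.\ Math.\ Anal.\ Appl.\ 371 (2010)], cited as \cite{Wang2010} immediately before the theorem. So there is no in-paper proof to compare against line by line. That said, your outline is essentially the argument of that reference (and of Da Prato--Debussche--Temam \cite{Prato1994} in the Wiener case), and it is consistent with the machinery the paper itself deploys elsewhere: the decomposition $u=\tilde u+\mathcal{O}$ and the $L^2$ energy estimate exploiting $\int_{\mathcal{D}}\tilde u^2\partial_x\tilde u\,dx=0$ appear in Section 2 as \eqref{Est:u eta}, and the sample-path regularity of $\mathcal{O}$ via the fBm isometry, Lemma \ref{Le:S(t)} and a Sobolev/Kolmogorov argument is exactly Lemma \ref{Le:O eta}. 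Your plan is therefore the right one and not a genuinely different route.

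Two small points of care if you were to execute it. First, the upgrade from second to $p$-th moments of the Gaussian process $\mathcal{O}$ is Gaussian moment equivalence (Kahane--Khintchine), not Fernique's theorem, which the paper reserves for \emph{exponential} integrability of $\|\mathbb{O}^{N,\eta}_t\|_{L^\infty}^2$; and to get $\mathcal{O}_t$ into $L^\infty(\mathcal{D})$ you should either check that the admissible Sobolev index satisfies $\gamma<2H-\tfrac12$ (which exceeds $\tfrac12$ precisely because $H>\tfrac12$) or argue through $W^{\beta,p}$ with $\beta p>1$ as in Lemma \ref{Le:O eta}. Second, in the $L^p$ a priori bound for $v$ the pure Burgers term vanishes but the cross terms $\partial_x(v\mathcal{O}_t)$ and $\partial_x(\mathcal{O}_t^2)$ do not; they must be absorbed into the dissipation using $\|\mathcal{O}_t\|_{L^\infty}$ and Gronwall, which is the structure of \eqref{Est:u eta} for $p=2$ and is what makes the resulting bound pathwise rather than in expectation. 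You acknowledge both issues, so the plan stands; it is a sketch rather than a complete proof, but the skeleton matches the cited source.
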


In the remainder of this section, we provide pathwise estimates for the mild solution of the SBE. 
For $\eta \geq 0$, we denote  
\begin{equation}\label{Eq:O eta}
	\mathbb{O}^{\eta}_t := \int_{0}^{t} S^{\eta}(t-s) d B^H(s) = \int_{0}^{t} e^{-(t-s) (A+\eta)} d B^H(s), 
\end{equation}
where $S^{\eta}(t)$ is the semigroup generated by $-A-\eta$. 
For a given $\eta \geq 0$, the SBE \eqref{Burgers} is equivalent to 
\begin{equation*}
	d u(t) = \left( (-A-\eta) u(t) + f(u(t)) + \eta u(t) \right) dt + d B^H(t), 
\end{equation*}
which possesses a mild solution determined by  
\begin{equation*}
	u(t) = S^{\eta}(t) u_0 + \int_{0}^{t} S^{\eta}(t-s) f(u(s)) ds + \eta \int_{0}^{t} S^{\eta}(t-s) u(s) ds + \mathbb{O}^{\eta}_t. 
\end{equation*}
By introducing $\tilde{u}_{\eta}(t) := u(t) - \mathbb{O}^{\eta}_t$, we obtain 
\begin{equation*}
	\tilde{u}_\eta (t) = S^{\eta}(t) u_0 + \int_{0}^{t} S^{\eta}(t-s) f(\tilde{u}_\eta(s) + \mathbb{O}^{\eta}_s) ds + \eta \int_{0}^{t} S^{\eta}(t-s) \left( \tilde{u}_\eta(s) + \mathbb{O}^{\eta}_s \right) ds, 
\end{equation*}
which means that $\tilde{u}_\eta(t): [0,T] \times \Omega \rightarrow U$ pathwisely solves
\begin{equation}\label{Eq:u tilde eta}
	\begin{aligned}
		\frac{d}{dt} \tilde{u}_\eta(t) &= (-A-\eta) \tilde{u}_\eta(t) + f(\tilde{u}_\eta(t) + \mathbb{O}^{\eta}_t) + \eta \left( \tilde{u}_\eta(t) + \mathbb{O}^{\eta}_t \right) \\
		&= -A \tilde{u}_\eta(t) + f(\tilde{u}_\eta(t) + \mathbb{O}^{\eta}_t) + \eta \mathbb{O}^{\eta}_t. 
	\end{aligned}
\end{equation}
\begin{Remark}
	We note that the equation \eqref{Eq:u tilde eta} is only a formal way of writing, since $-A \tilde{u}_\eta$ has no meaning. In fact, replacing by a smooth approximation, i.e. by a smooth process of $\mathbb{O}^{\eta}_t$ in \eqref{Eq:u tilde eta}, then this equation, and the subsequent derivations are meaningful for any smooth approximations and therefore for $\tilde{u}_\eta(t)$ itself by taking limits. This density argument can be found in, e.g. \cite{Prato1994} and references therein. Instead, here we omit this lengthy step for convenience. 
	
	That we additionally introduce a stochastic convolution $\mathbb{O}^{\eta}_t$, i.e. \eqref{Eq:O eta}, is because the exponential moment of $\mathbb{O}^{\eta}_t$ can be estimated for a proper $\eta \geq 0$ (see Lemma \ref{Le:exponential int O}) while the estimate of exponential moment of the stochastic convolution $\mathcal{O}_t$ is not an easy work. 
\end{Remark}
By using the identity $\frac{1}{2} \frac{d}{dt} \| \tilde{u}_\eta(t) \|^2 = \langle \tilde{u}_\eta(t) , \frac{d}{dt} \tilde{u}_\eta(t) \rangle$, the integral by parts,  $\int_{\mathcal{D}} \tilde{u}_\eta^2 \frac{\partial}{\partial x} \tilde{u}_\eta dx = 0$, $ab \leq a^2 + \frac{1}{4}b^2$ and $ab \leq \frac{1}{2} a^2 + \frac{1}{2} b^2$, it is easy to verify 
\begin{equation*}
	\frac{d}{dt} \| \tilde{u}_\eta(t) \|^2 + \dfrac{1}{2} \| \tilde{u}_\eta(t) \|_{\dot{H}^1}^2
	\leq \left(\| \mathbb{O}^{\eta}_t \|_{L^\infty}^2 + 1 \right)  \| \tilde{u}_\eta(t)\|^2  
	+ \frac{1}{2} \| \mathbb{O}^{\eta}_t \|_{L^4}^4 
	+ \eta^2 \| \mathbb{O}^{\eta}_t \|^2. 
\end{equation*}
By using Gronwall's inequality and noting that $\tilde{u}_\eta(0) = u_0$, we obtain 
\begin{equation}\label{Est:u eta}
	\| \tilde{u}_\eta(t) \|^2 
	\leq \left( \|u_0\|^2 + \frac{1}{2} \int_{0}^{t} \| \mathbb{O}^{\eta}_s \|_{L^4}^4 ds
	+ \eta^2 \int_{0}^{t} \| \mathbb{O}^{\eta}_s \|^2 ds \right)
	e^{\int_{0}^{t} \left(\| \mathbb{O}^{\eta}_s \|_{L^\infty}^2 + 1 \right) ds} . 
\end{equation}
As a by-product, we can derive 
\begin{equation}\label{Est:int u eta}
	\frac{1}{2} \int_{0}^{t} \| \tilde{u}_\eta(s) \|_{\dot{H}^1}^2 ds
	\leq \|u_0\|^2 + \int_{0}^{t} \left(\| \mathbb{O}^{\eta}_s \|_{L^\infty}^2 + 1 \right)  \| \tilde{u}_\eta(s)\|^2  ds 
	+ \frac{1}{2} \int_{0}^{t} \| \mathbb{O}^{\eta}_s \|_{L^4}^4 ds
	+ \eta^2 \int_{0}^{t} \|\mathbb{O}^{\eta}_s\|^2 ds. 
\end{equation}

\section{Spatial discretization}\label{Sec:semi-discrete}
In this section we conduct the spatial discretization by using the spectral Galerkin method. 
Introduce the spectral Galerkin projection $P_N: U \rightarrow U^N := \text{span} \{ \phi_1 , \phi_2 , ... , \phi_N \}$ and the operator $A_N := P_N A$, $N \in \mathbb{N}$. 
It is easy to check that 
\begin{equation}\label{projection}
	\|(P_N-I)\psi\| \leq \lambda_{N}^{-\frac{\alpha}{2}} \|\psi\|_{\dot{H}^{\alpha}} , \quad \forall \, \psi \in \dot{H}^{\alpha} , \ \alpha \geq 0. 
\end{equation}
The aim of spatial discretization is to construct $u^N : [0,T] \times \Omega \rightarrow U^N$ solving the following finite dimensional equation $\mathbb{P}$-a.s.. 
\begin{equation}\label{Eq:semi discrete}
	d u^N(t) = \left( -A_N u^N(t) + P_N f(u^N(t)) \right) dt + P_N dB^H(t). 
\end{equation}
The equation \eqref{Eq:semi discrete} possesses a unique mild solution, given by 
\begin{equation}\label{Eq:mild solution uN}
	u^N(t) = S_N(t) P_N u_0 + \int_{0}^{t} S_N(t-s) P_N f(u^N(s)) ds + \int_{0}^{t} S_N(t-s) P_N d B^H(s), 
\end{equation}
where $S_N(t) := \exp(-t A_N)$ is the semigroup generated by $-A_N$. 
Similarly to the continuous problem, for $\eta \geq 0$, we can rewrite \eqref{Eq:semi discrete} as 
\begin{align*}
	d u^N(t) = \left( (-A_N-\eta) u^N(t) + P_N f(u^N(t)) + \eta u^N(t) \right) dt + P_N d B^H(t), 
\end{align*}
and pass to the mild solution   
\begin{align*}
	u^N(t) = S_N^\eta(t) P_N u_0 + \int_{0}^{t} S_N^\eta(t-s) \left( P_N f(u^N(s)) + \eta u^N(s) \right) ds + \int_{0}^{t} S_N^\eta(t-s) P_N dB^H(s),  
\end{align*}
where $S_N^\eta(t) :=  \exp(-t(A_N+\eta))$ is the semigroup generated by $-A_N - \eta$. 
We denote 
$$\mathcal{O}^N_t := P_N \mathcal{O}_t := \int_{0}^{t} S_N(t-s) P_N d B^H(s), $$
and for $\eta \geq 0$, 
$$\mathbb{O}^{N, \eta}_t := \int_{0}^{t} S_N^\eta(t-s) P_N d B^H(s) = \int_{0}^{t} e^{-(t-s) (A_N+\eta)} P_N d B^H(s). $$
Introduce $\tilde{u}^N_{\eta}(t) := u^N(t) - \mathbb{O}^{N, \eta}_t$, which solves 
\begin{equation*}
	\frac{d}{dt} \tilde{u}^N_\eta(t) 
	= -A_N \tilde{u}^N_\eta(t) + f(\tilde{u}^N_\eta(t) + \mathbb{O}^{N, \eta}_t) + \eta \mathbb{O}^{N, \eta}_t. 
\end{equation*}
An analogous estimate of $\tilde{u}_\eta$, i.e. \eqref{Est:u eta}, leads to the following pathwise estimate of $\tilde{u}^N_\eta(t)$: 
\begin{equation}\label{Est:u eta N}
	\| \tilde{u}^N_\eta(t) \|^2 
	\leq \left( \|P_N u_0\|^2 + \frac{1}{2} \int_{0}^{t} \| \mathbb{O}^{N, \eta}_s \|_{L^4}^4 ds
	+ \eta^2 \int_{0}^{t} \| \mathbb{O}^{N, \eta}_s \|^2 ds \right)
	e^{\int_{0}^{t} \left(\| \mathbb{O}^{N, \eta}_s \|_{L^\infty}^2 + 1 \right) ds} . 
\end{equation}

\subsection{Estimate of the stochastic convolution}
In the following we give some properties of stochastic convolutions. 
We consider the general case, i.e., to estimate $\mathbb{O}^{\eta}_t$ for $\eta \geq 0$. 
Other special cases are included, e.g., the estimates of $\mathcal{O}_t$ and $\mathcal{O}^N_t$ follow from the case of $\eta = 0$ and the finite $N$-terms summation.  

\begin{Lemma}\label{Le:O eta}
	Let $\beta \in (0,\frac{1}{4})$, $p \in (\frac{1}{\beta} , \infty)$ and $H \in (\frac{1}{2}, 1)$. For $\eta \geq 0$, it holds that  
	\begin{equation*}
		\left(\mathbb{E} \left[\| \mathbb{O}^{\eta}_t \|_{L^\infty}^p\right] \right)^{\frac{1}{p}}
		\leq C(p, \beta, H) \left( \sum_{k=1}^{\infty} \frac{ k^{4\beta} }{(\lambda_k+\eta)^{2H}} \right)^{\frac{1}{2}}. 
	\end{equation*}
\end{Lemma}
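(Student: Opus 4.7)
The plan is to reduce the $L^\infty$-moment to the variance of a scalar Gaussian via a one-dimensional Sobolev embedding, and then extract the claimed weight $k^{4\beta}/(\lambda_k+\eta)^{2H}$ from the spectral expansion combined with Lemma \ref{Le:S(t)}. The hypothesis $p>1/\beta$ is precisely what makes the embedding $W^{\beta,p}(\mathcal{D})\hookrightarrow L^\infty(\mathcal{D})$ available in dimension one, so I would start from
\begin{equation*}
\mathbb{E}\bigl[\|\mathbb{O}^\eta_t\|_{L^\infty}^p\bigr]\le C\,\mathbb{E}\bigl[\|\mathbb{O}^\eta_t\|_{L^p}^p\bigr]+C\int_{\mathcal{D}}\!\int_{\mathcal{D}}\frac{\mathbb{E}\bigl[|\mathbb{O}^\eta_t(x)-\mathbb{O}^\eta_t(y)|^p\bigr]}{|x-y|^{1+\beta p}}\,dx\,dy,
\end{equation*}
with the Slobodeckij double integral produced by Fubini after passing the expectation inside.

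Because $\mathbb{O}^\eta_t$ is a convergent series of Gaussian stochastic integrals, every real-valued spatial increment $\mathbb{O}^\eta_t(x)-\mathbb{O}^\eta_t(y)$ is a centered scalar Gaussian variable, and the classical moment equivalence $\mathbb{E}[|Z|^p]\le C_p(\mathbb{E}|Z|^2)^{p/2}$ reduces the task to estimating its variance. Expanding in the Dirichlet eigenbasis and using independence of the scalar fBms together with the It\^o isometry recalled before the statement, I get
\begin{equation*}
\mathbb{E}\bigl[|\mathbb{O}^\eta_t(x)-\mathbb{O}^\eta_t(y)|^2\bigr]=\sum_{k=1}^{\infty}(\phi_k(x)-\phi_k(y))^2\,\sigma_k^2(\eta),
\end{equation*}
where $\sigma_k^2(\eta):=\int_0^t\!\int_0^t e^{-(2t-\mu-\nu)(\lambda_k+\eta)}\phi(\mu-\nu)\,d\mu\,d\nu$. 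Applying Lemma \ref{Le:S(t)} with the shifted generator $A+\eta$ (whose $k$-th eigenvalue is $\lambda_k+\eta$) and $\vartheta=H$ yields $\sigma_k^2(\eta)\le C(H)(\lambda_k+\eta)^{-2H}$, while interpolating the trivial $L^\infty$ bound on $\phi_k$ with the uniform Lipschitz estimate $|\phi_k(x)-\phi_k(y)|\le Ck|x-y|$ supplies $|\phi_k(x)-\phi_k(y)|^2\le Ck^{4\beta}|x-y|^{4\beta}$, legal because $2\beta\in(0,1/2)$.

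Combining these ingredients and reapplying Gaussian moment equivalence gives
\begin{equation*}
\mathbb{E}\bigl[|\mathbb{O}^\eta_t(x)-\mathbb{O}^\eta_t(y)|^p\bigr]\le C_p\,|x-y|^{2p\beta}\biggl(\sum_{k=1}^{\infty}\frac{k^{4\beta}}{(\lambda_k+\eta)^{2H}}\biggr)^{p/2},
\end{equation*}
so the Slobodeckij integrand reduces to $|x-y|^{p\beta-1}$, integrable on $\mathcal{D}\times\mathcal{D}$ since $p\beta>1$. The $\mathbb{E}[\|\mathbb{O}^\eta_t\|_{L^p}^p]$ term is handled identically using $\|\phi_k\|_{L^\infty}\le\sqrt{2}$ in place of the basis-increment estimate, and produces a multiple of the same sum; taking $p$-th roots finishes the argument. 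The delicate step is the simultaneous choice of interpolation exponent: it must exceed $\beta$ so that the Slobodeckij weight $|x-y|^{-1-\beta p}$ is absorbed by the H\"older gain, yet each unit of extra interpolation costs a matching power of $k$ in the final sum, so the choice $2\beta$ is the clean one that both beats the integrability threshold and produces exactly the $k^{4\beta}$ weight in the statement.
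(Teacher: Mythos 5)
Your proposal is correct and follows essentially the same route as the paper: the Sobolev embedding $W^{\beta,p}\hookrightarrow L^\infty$ with the Slobodeckij seminorm, Gaussian moment equivalence, the spectral expansion with It\^o's isometry and Lemma \ref{Le:S(t)} applied to the shifted generator, and the interpolation $|\phi_k(x)-\phi_k(y)|^2\le Ck^{4\beta}|x-y|^{4\beta}$ yielding the integrable exponent $\beta p-1$. No substantive differences.
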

\begin{proof}
	By using the Sobolev embedding inequality $W^{\beta , p}(\mathcal{D}) \hookrightarrow L^\infty(\mathcal{D})$, where $\beta p > 1$ and $p \in (\frac{1}{\beta} , \infty)$, we have 
	\begin{equation}\label{Est:Sobolev of O}
		\mathbb{E} \left[\| \mathbb{O}^{\eta}_t \|_{L^\infty}^p\right] \leq C \mathbb{E} \left[\| \mathbb{O}^{\eta}_t \|_{W^{\beta , p}}^p\right].  
	\end{equation}
	Through the intrinsic norm of $W^{\beta , p}(\mathcal{D})$, and the fact that $\mathbb{O}^{\eta}_t$, $t \in [0,T]$, is a $U$-valued Gaussian random variable, we have 
	\begin{align*}
		\mathbb{E} \left[\| \mathbb{O}^{\eta}_t \|_{W^{\beta, p}}^p \right]
		&= \mathbb{E} \left[  \int_{0}^{1} | \mathbb{O}^{\eta}_t(x) |^p dx  +  \int_{0}^{1} \int_{0}^{1} \frac{| \mathbb{O}^{\eta}_t(x) - \mathbb{O}^{\eta}_t(y) |^p}{|x-y|^{1+\beta p}} dx dy \right] \\
		&= \mathbb{E} \left[|Y|^p\right] \int_{0}^{1} \left( \mathbb{E} \left[|\mathbb{O}^{\eta}_t(x) |^2\right] \right)^{\frac{p}{2}} dx  
		+ \mathbb{E} \left[|Y|^p\right] \int_{0}^{1} \int_{0}^{1} \frac{\left(\mathbb{E} \left[| \mathbb{O}^{\eta}_t(x) - \mathbb{O}^{\eta}_t(y) |^2\right]\right)^{\frac{p}{2}}}{|x-y|^{1+\beta p}} dx dy, 
	\end{align*}
	where $Y$ stands for the standard Gaussian  random variable. By the independence of $\{w_k^H\}_{k \in \mathbb{N}}$, It\^o's isometry, and Lemma \ref{Le:S(t)} with $\vartheta = H$, we have 
	\begin{align*}
		\mathbb{E} \left[|\mathbb{O}^{\eta}_t(x) |^2\right] 
		&= \mathbb{E} \left[\left|  \sum_{k=1}^{\infty} \int_{0}^{t} e^{-(\lambda_k+\eta)(t-s)} \phi_k(x) dw_k^H(s) \right|^2\right] \\
		&= \sum_{k=1}^{\infty} |\phi_k(x)|^2 \mathbb{E} \left[\left| \int_{0}^{t} e^{-(\lambda_k+\eta)(t-s)}  dw_k^H(s) \right|^2\right] \\
		&= \sum_{k=1}^{\infty} |\phi_k(x)|^2 \int_{0}^{t} \int_{0}^{t} e^{-(\lambda_k+\eta)(t-u)} e^{-(\lambda_k+\eta)(t-v)} \phi(u-v) du dv \\
		&= \sum_{k=1}^{\infty} \frac{|\phi_k(x)|^2}{(\lambda_k+\eta)^{2H}} \int_{0}^{t} \int_{0}^{t} \left\langle (A+\eta)^H S^{\eta}(t-u) \phi_k ,  (A+\eta)^H S^{\eta}(t-v) \phi_k \right\rangle \phi(u-v) du dv  \\
		&\leq \sum_{k=1}^{\infty} \frac{|\phi_k(x)|^2}{(\lambda_k+\eta)^{2H}} C(H) \|\phi_k\|^2 
		\leq C(H) \sum_{k=1}^{\infty} \frac{1}{(\lambda_k+\eta)^{2H}}, 
	\end{align*}
	which implies that 
	\begin{equation*}
		\int_{0}^{1} \left( \mathbb{E} \left[|\mathbb{O}^{\eta}_t(x) |^2\right] \right)^{\frac{p}{2}} dx \leq (C(H))^{\frac{p}{2}} \left( \sum_{k=1}^{\infty} \frac{1}{(\lambda_k+\eta)^{2H}} \right)^{\frac{p}{2}}. 
	\end{equation*}
	Similarly, we have 
	\begin{equation*}
		\mathbb{E} \left[| \mathbb{O}^{\eta}_t(x) - \mathbb{O}^{\eta}_t(y) |^2 \right]
		\leq C(H) \sum_{k=1}^{\infty} \frac{|\phi_k(x) - \phi_k(y)|^2 }{(\lambda_k+\eta)^{2H}}. 
	\end{equation*}
	Through $|\sin(x) - \sin(y)| \leq |x-y|$ and $|\sin(x)| \leq 1$, then recalling $\phi_k(x) =  \sqrt{2} \sin (k \pi x)$, we have 
	\begin{equation*}
		|\phi_k(x) - \phi_k(y)|^2 = 2 |\sin (k \pi x) - \sin (k \pi y)|^{2-4\beta}  |\sin (k \pi x) - \sin (k \pi y)|^{4\beta} \leq 2^{3-4\beta} (k \pi)^{4\beta} |x-y|^{4\beta}. 
	\end{equation*}
	So, we obtain 
	\begin{equation*}
		\mathbb{E} \left[|\mathbb{O}^{\eta}_t(x) - \mathbb{O}^{\eta}_t(y) |^2\right] 
		\leq C(H) \sum_{k=1}^{\infty} \frac{2^{3-4\beta} (k \pi)^{4\beta} |x-y|^{4\beta}}{(\lambda_k+\eta)^{2H}},  
	\end{equation*}
	which means that 
	\begin{align*}
		&\int_{0}^{1} \int_{0}^{1} \frac{\left(\mathbb{E} \left[| \mathbb{O}^{\eta}_t(x) - \mathbb{O}^{\eta}_t(y) |^2\right]\right)^{\frac{p}{2}}}{|x-y|^{1+\beta p}} dx dy
		\leq  \int_{0}^{1} \int_{0}^{1} \frac{\left( C(H) \sum_{k=1}^{\infty} \frac{2^{3-4\beta} (k \pi)^{4\beta} |x-y|^{4\beta}}{(\lambda_k+\eta)^{2H}} \right)^{\frac{p}{2}}}{|x-y|^{1+\beta p}} dx dy \\
		\leq \ &  (C(H))^{\frac{p}{2}}  2^{\frac{3p}{2} - 2 \beta p} \pi^{2\beta p}  \int_{0}^{1} \int_{0}^{1} \frac{\left( \sum_{k=1}^{\infty} \frac{ k^{4\beta} }{(\lambda_k+\eta)^{2H}} \right)^{\frac{p}{2}}}{|x-y|^{1-\beta p}} dx dy \qquad \qquad (\text{note that } \beta p > 1) \\
		\leq \ &  (C(H))^{\frac{p}{2}}  2^{\frac{3p}{2} - 2 \beta p} \pi^{2\beta p}  \left( \sum_{k=1}^{\infty} \frac{ k^{4\beta} }{(\lambda_k+\eta)^{2H}} \right)^{\frac{p}{2}}. 
	\end{align*}
	These, together with \eqref{Est:Sobolev of O}, lead to  
	\begin{equation*}
		\mathbb{E} \left[\| \mathbb{O}^{\eta}_t \|_{L^\infty}^p\right] 
		\leq C(p, H) \mathbb{E} \left[|Y|^p\right] \left( \sum_{k=1}^{\infty} \frac{1}{(\lambda_k+\eta)^{2H}} \right)^{\frac{p}{2}} + C(p, \beta, H) \mathbb{E} \left[|Y|^p\right] \left( \sum_{k=1}^{\infty} \frac{ k^{4\beta} }{(\lambda_k+\eta)^{2H}} \right)^{\frac{p}{2}}. 
	\end{equation*}
	Finally, through the well-known result $\left(\mathbb{E} \left[|Y|^p\right] \right)^{\frac{1}{p}} \leq p$, we obtain the desired conclusion. 
\end{proof}

We note that $\mathbb{O}^{\eta}_t = \mathcal{O}_t$ when $\eta=0$. The above estimations are thus valid for, e.g. $\mathcal{O}_t$,  $\mathbb{O}^{N, \eta}_t$, and $\mathcal{O}^N_t$. Following Lemma \ref{Le:O eta}, the convergence between $\mathcal{O}_t$ and $\mathcal{O}^N_t$ can be obtained. 
\begin{Corollary}\label{Cor:O - ON}
	For any $p \geq 2$ and $N \in \mathbb{N}$, it holds that  
	\begin{equation*}
		\left( \mathbb{E} \left[\| \mathcal{O}_t - \mathcal{O}^N_t \|_{L^\infty}^p\right] \right)^{\frac{1}{p}}
		\leq C(p, \beta, H) \lambda_{N+1}^{\beta-H+\frac{1}{4}}, 
	\end{equation*}
	where $\beta \in (0, \frac{1}{4})$ and $H \in (\frac{1}{2} , 1)$. 
\end{Corollary}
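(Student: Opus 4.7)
The key observation is that
\[
\mathcal{O}_t - \mathcal{O}^N_t = (I - P_N)\mathcal{O}_t = \sum_{k=N+1}^{\infty} \int_0^t e^{-\lambda_k(t-s)} \phi_k\, dw_k^H(s),
\]
so the difference is itself a stochastic convolution, identical in structure to $\mathbb{O}^{\eta}_t$ with $\eta=0$ but with the summation restricted to the tail indices $k \geq N+1$. The plan is therefore to rerun the Sobolev--Gaussian argument used in the proof of Lemma \ref{Le:O eta} verbatim on this truncated object, and then estimate the resulting tail series.

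First, I would fix some $p^{*} > \max\{p, 1/\beta\}$ so that the embedding $W^{\beta,p^{*}}(\mathcal{D})\hookrightarrow L^{\infty}(\mathcal{D})$ is available, and reproduce the computation of Lemma \ref{Le:O eta} for $\mathcal{O}_t-\mathcal{O}^N_t$ with $\eta=0$. Since the eigenfunction expansions and the invocation of Lemma \ref{Le:S(t)} (with $\vartheta = H$) depend only on which indices are summed, the argument yields
\[
\bigl(\mathbb{E}\bigl[\|\mathcal{O}_t-\mathcal{O}^N_t\|_{L^{\infty}}^{p^{*}}\bigr]\bigr)^{1/p^{*}}
\leq C(p^{*},\beta,H)\left( \sum_{k=N+1}^{\infty} \frac{k^{4\beta}}{\lambda_k^{2H}}\right)^{1/2}.
\]
Jensen's (Lyapunov's) inequality in the probability space $\Omega$ then upgrades this to the same bound for the $L^p$-moment for any $p\in[2,p^{*}]$, absorbing $p^{*}$ into the constant $C(p,\beta,H)$.

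Next, I would estimate the tail sum. Because $-A$ is the Dirichlet Laplacian on $(0,1)$, we have $\lambda_k = k^2\pi^2$, so the summand is of order $k^{4\beta-4H}$. Since $\beta<\tfrac14$ and $H>\tfrac12$, the exponent satisfies $4\beta-4H<-1$, so comparing the series to the integral $\int_N^{\infty} x^{4\beta-4H}\,dx$ gives
\[
\sum_{k=N+1}^{\infty} \frac{k^{4\beta}}{\lambda_k^{2H}} \leq C(\beta,H)\, N^{4\beta-4H+1}
\leq C(\beta,H)\,\lambda_{N+1}^{2\beta-2H+\frac12},
\]
where the last step uses $\lambda_{N+1}\asymp (N+1)^2\asymp N^2$. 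Taking square roots converts the exponent into $\beta-H+\tfrac14$ and yields the claimed estimate.

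The only step requiring care is the interaction between the threshold $p > 1/\beta$ needed for Sobolev embedding and the statement's range $p\geq 2$; this is handled cleanly by the initial lift to $p^{*}$ followed by Jensen, so I expect no genuine obstacle. The tail-sum comparison is routine, and everything else is an application of the already-established Lemma \ref{Le:O eta} with the index set restricted to $k\geq N+1$.
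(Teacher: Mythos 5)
Your proposal is correct and follows essentially the same route as the paper: the paper likewise applies Lemma \ref{Le:O eta} with $\eta=0$ to the tail $\sum_{k=N+1}^{\infty}$, bounds the resulting series by $\lambda_{N+1}^{2\beta-2H+\frac12}$ using $\lambda_k=k^2\pi^2$, and handles the range $p\in[2,\frac1\beta]$ by H\"older's inequality, which is the same move as your Jensen/Lyapunov lift. No gaps.
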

\begin{proof}
	For $p \in (\frac{1}{\beta}, \infty)$, taking $\eta=0$ in Lemma \ref{Le:O eta} and recalling $\lambda_{k} = k^2 \pi^2$, we obtain 
	\begin{equation*}
		\left( \mathbb{E} \left[\| \mathcal{O}_t - \mathcal{O}^N_t \|_{L^\infty}^p\right] \right)^{\frac{1}{p}}
		\leq C(p, \beta, H) \left( \sum_{k=N+1}^{\infty} \frac{ k^{4\beta} }{\lambda_k^{2H}} \right)^{\frac{1}{2}}
		\leq C(p, \beta, H) \lambda_{N+1}^{\beta-H+\frac{1}{4}}. 
	\end{equation*}
	By using H\"older's inequality, it is easy to yield the desired assertions for $p \in [2, \frac{1}{\beta}]$. 
\end{proof}

\begin{Lemma}\label{Le:eta}
	For $a \in \mathbb{R}$ and $b \in (a+1, \infty)$, it holds that 
		$\lim\limits_{\eta \rightarrow \infty}\left(  \sum_{k=1}^{\infty} \frac{k^a}{k^b+\eta}  \right) = 0$. 
\end{Lemma}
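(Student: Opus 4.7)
The plan is to deduce the limit by dominated convergence on the counting measure. For each fixed $k \in \mathbb{N}$ the summand $k^a/(k^b+\eta)$ tends to $0$ as $\eta \to \infty$, and for every $\eta \geq 0$ one has the $\eta$-independent pointwise bound
$$\frac{k^a}{k^b+\eta} \leq \frac{k^a}{k^b} = k^{a-b}.$$
The hypothesis $b > a+1$ is exactly the $p$-series criterion ensuring $\sum_{k=1}^{\infty} k^{a-b} < \infty$, which furnishes a summable dominating sequence. The dominated convergence theorem then allows the limit to be taken inside the sum, giving the claim.

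If one prefers an entirely elementary argument, the same conclusion follows from an $\varepsilon$-$K$ splitting. Given $\varepsilon > 0$, first choose $K$ so large that the tail $\sum_{k > K} k^{a-b} < \varepsilon/2$, which is possible by the same convergence of the $p$-series. Then, with $K$ now fixed, choose $\eta$ large enough that
$$\sum_{k=1}^{K} \frac{k^a}{k^b+\eta} \leq \frac{1}{\eta} \sum_{k=1}^{K} k^a < \frac{\varepsilon}{2},$$
which is possible because the finite sum $\sum_{k=1}^{K} k^a$ is a constant independent of $\eta$. Adding the two contributions gives $\sum_{k=1}^{\infty} k^a/(k^b+\eta) < \varepsilon$ for all sufficiently large $\eta$.

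There is no real obstacle in this lemma; both approaches are short. The only point worth highlighting is that the hypothesis $b > a+1$ is used exactly once, to guarantee summability of the dominating majorant $k^{a-b}$, and no finer information about $a$, $b$ (such as positivity) is required.
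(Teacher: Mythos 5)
Your proposal is correct and follows the same route as the paper, which simply notes that the lemma follows from Lebesgue's dominated convergence theorem (applied to the counting measure, with majorant $k^{a-b}$ summable precisely because $b>a+1$) without writing out the details. Your additional elementary $\varepsilon$--$K$ splitting is also valid but is not needed beyond the dominated convergence argument the paper invokes.
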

Lemma \ref{Le:eta} can be proved by using Lebesgue's theorem of dominated convergence (see, e.g., \cite{Jentzen2017}). 
Next, we quote Fernique's theorem, which can be found in \cite{Jentzen2017} and \cite[Theorem 8.2.1]{Stroock2011}. It provides the criteria to obtain the boundedness of exponential moments of a stochastic process. 
\begin{Theorem}\label{Th:Fernique}
	Let $(V, \|\cdot\|_V)$ be a separable $\mathbb{R}$-Banach space. Assume that $X : \Omega \rightarrow V$ is a mapping which satisfies that $\forall \, \varphi \in V'$, $\varphi \circ X : \Omega \rightarrow \mathbb{R}$ is a centered Gaussian random variable. If $\mathbb{P}(\|X\|_V^2 > m) \leq \frac{1}{10}$ holds for $m>0$, then 
	\begin{equation*}
		\mathbb{E} \left[ \exp\left( \frac{\|X\|_V^2}{18m} \right) \right] \leq \sqrt{e} + \sum_{k=0}^{\infty} \left( \frac{e}{3} \right)^{2^k} < \infty. 
	\end{equation*}
\end{Theorem}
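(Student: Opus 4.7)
The plan is to prove this classical Fernique estimate by symmetrization, deriving super-exponential tail decay of $\|X\|_V$ from the given smallness of $\mathbb{P}(\|X\|_V^2 > m)$, and then decomposing the exponential moment over a geometrically increasing sequence of tail levels.

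First I would exploit the Gaussianity of $X$: the assumption that $\varphi \circ X$ is centered Gaussian for every $\varphi \in V'$ identifies $X$ as a centered Gaussian random variable in the separable Banach space $V$. Consequently, if $Y$ is an independent copy of $X$, then the pair $((X+Y)/\sqrt{2},\,(X-Y)/\sqrt{2})$ has the same law as $(X,Y)$; this can be verified by evaluating the joint characteristic functional on $V'\times V'$ and observing that the orthogonal substitution preserves the Gaussian law. Writing $X = (U+V)/\sqrt{2}$, $Y = (U-V)/\sqrt{2}$ with $U, V$ iid copies of $X$, the two reverse triangle inequalities $\bigl|\|U+V\| - \|U-V\|\bigr| \leq \|(U+V)\pm(U-V)\| = 2\|U\|$ or $2\|V\|$ give $\bigl|\|U+V\| - \|U-V\|\bigr| \leq 2\min(\|U\|,\|V\|)$. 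Hence on $\{\|X\|_V \leq s\}\cap\{\|Y\|_V > t\}$ one has $\min(\|U\|_V, \|V\|_V) > (t-s)/\sqrt{2}$, and combining with the independence of $U, V$ produces the central symmetrization inequality
\begin{equation*}
\mathbb{P}(\|X\|_V \leq s)\,\mathbb{P}(\|X\|_V > t) \leq \mathbb{P}\bigl(\|X\|_V > \tfrac{t-s}{\sqrt{2}}\bigr)^2, \qquad 0 < s < t.
\end{equation*}

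Second I would choose $s = \sqrt{m}$, so that the hypothesis yields $\mathbb{P}(\|X\|_V \leq \sqrt{m}) \geq 9/10$, and define the threshold sequence $t_0 = \sqrt{m}$, $t_{n+1} = \sqrt{m} + \sqrt{2}\,t_n$. The symmetrization inequality then iterates as $\mathbb{P}(\|X\|_V > t_{n+1}) \leq \tfrac{10}{9}\,\mathbb{P}(\|X\|_V > t_n)^2$. Setting $u_n := \tfrac{10}{9}\mathbb{P}(\|X\|_V > t_n)$ reduces this to $u_{n+1} \leq u_n^2$ with $u_0 \leq 1/9$, giving the doubly exponential tail $\mathbb{P}(\|X\|_V > t_n) \leq \tfrac{9}{10}\cdot 9^{-2^n}$. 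The closed-form solution $t_n = \sqrt{m}\bigl((\sqrt{2})^{n+1}-1\bigr)/(\sqrt{2}-1)$ provides the explicit growth $t_n^2 = O(m\cdot 2^n)$. Finally I would split
\begin{equation*}
\mathbb{E}\!\left[\exp\!\left(\tfrac{\|X\|_V^2}{18m}\right)\right]
\leq \exp\!\left(\tfrac{t_0^2}{18m}\right) + \sum_{n=0}^{\infty} \exp\!\left(\tfrac{t_{n+1}^2}{18m}\right)\mathbb{P}(\|X\|_V > t_n),
\end{equation*}
where the first term is bounded by $\exp(1/18) \leq \sqrt{e}$, and the quotient $\exp(t_{n+1}^2/(18m))\cdot 9^{-2^n}$ is estimated by $(e/3)^{2^n}$ after tracking the prefactors, producing the stated bound since $e/3 < 1$ makes the doubly exponential series convergent.

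The main obstacle is the first step: carrying out the Gaussian symmetrization in a general separable Banach space, where rotation invariance must be justified via the joint characteristic functional on $V'$ rather than an inner product argument, and where the crucial geometric inequality $\bigl|\|U+V\|-\|U-V\|\bigr|\leq 2\min(\|U\|,\|V\|)$ must be derived from the Banach-space triangle inequality alone. Once the symmetrization inequality is in place, the remainder is a scalar quadratic induction, an elementary estimate of $t_n^2$, and summation of a doubly exponentially convergent series.
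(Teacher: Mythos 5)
The paper does not prove Theorem \ref{Th:Fernique}; it quotes it from \cite{Jentzen2017} and Theorem 8.2.1 of \cite{Stroock2011}, so there is no in-paper proof to compare against. Your proposal reconstructs the standard Fernique argument from the cited source --- rotation invariance of an independent pair via the characteristic functional, the symmetrization inequality $\mathbb{P}(\|X\|_V\leq s)\,\mathbb{P}(\|X\|_V>t)\leq \mathbb{P}(\|X\|_V>\tfrac{t-s}{\sqrt{2}})^2$, the doubly exponential tail along $t_{n+1}=\sqrt{m}+\sqrt{2}\,t_n$, and the layered expansion of the exponential moment --- and the constants check out: $u_0\leq\tfrac{1}{9}$ gives $\mathbb{P}(\|X\|_V>t_n)\leq\tfrac{9}{10}9^{-2^n}$, while $t_{n+1}^2\leq m\,2^{n+2}(\sqrt{2}+1)^2$ yields $\exp(t_{n+1}^2/(18m))\,9^{-2^n}\leq(e^{4(\sqrt2+1)^2/18}/9)^{2^n}\leq(e/3)^{2^k}$ and $\exp(t_0^2/(18m))=e^{1/18}\leq\sqrt{e}$. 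The proof is correct.
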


By using Theorem \ref{Th:Fernique}, we can get the exponential integrability of $\mathbb{O}^{N, \eta}_t$ in the following lemma. 
\begin{Lemma}\label{Le:exponential int O}
	For any $\tilde{c}>0$ and $N \in \mathbb{N}$, there exists $\eta \geq 0$ such that 
	\begin{equation*}
		\mathbb{E}\left[  e^{ \tilde{c} \int_{0}^{T} \left(\| \mathbb{O}^{N, \eta}_t \|_{L^\infty}^2 + 1 \right) dt} \right]
		< \infty.  
	\end{equation*}
\end{Lemma}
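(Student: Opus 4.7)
The plan is to cast $\mathbb{O}^{N,\eta}$ as a centered Gaussian random variable in a suitable separable $\mathbb{R}$-Banach space $V$, show via Chebyshev and the moment bound of Lemma \ref{Le:O eta} that $\mathbb{P}(\|\mathbb{O}^{N,\eta}\|_V^2>m)\le\frac{1}{10}$ for $m:=1/(18\tilde{c})$ provided $\eta$ is chosen large enough, and then invoke Fernique's theorem (Theorem \ref{Th:Fernique}). The constant $1$ inside the exponent is handled separately by the trivial factor $e^{\tilde{c}T}$.

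\noindent\textbf{Step 1 (Banach space setup).} I take $V:=L^{2}\bigl(0,T;C(\overline{\mathcal{D}})\bigr)$, which is a separable $\mathbb{R}$-Banach space, and note that $\mathbb{O}^{N,\eta}_t$ almost surely takes values in the finite-dimensional subspace $U^N\subset C(\overline{\mathcal{D}})$, so $\mathbb{O}^{N,\eta}\in V$ a.s. Since $\mathbb{O}^{N,\eta}$ is a linear combination of the finitely many independent scalar fBms $w_1^H,\dots,w_N^H$, every bounded linear functional $\varphi\in V'$ applied to $\mathbb{O}^{N,\eta}$ yields a real-valued centered Gaussian random variable, verifying the hypothesis of Theorem \ref{Th:Fernique}. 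Since $\|\psi\|_{L^\infty}=\|\psi\|_{C(\overline{\mathcal{D}})}$ for $\psi\in U^N$, we have
\begin{equation*}
\int_0^T \|\mathbb{O}^{N,\eta}_t\|_{L^\infty}^2\,dt = \|\mathbb{O}^{N,\eta}\|_V^2,\quad\mathbb{P}\text{-a.s.}
\end{equation*}

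\noindent\textbf{Step 2 (Smallness of the second moment).} By Fubini, Jensen (applied with some $p>\max\{2,1/\beta\}$), and Lemma \ref{Le:O eta} for a fixed $\beta\in\bigl(0,\min\{\tfrac14,H-\tfrac14\}\bigr)$,
\begin{equation*}
\mathbb{E}\bigl[\|\mathbb{O}^{N,\eta}\|_V^{2}\bigr]
=\int_{0}^{T}\mathbb{E}\bigl[\|\mathbb{O}^{N,\eta}_t\|_{L^\infty}^{2}\bigr]dt
\le T\,C(p,\beta,H)^{2}\sum_{k=1}^{\infty}\frac{k^{4\beta}}{(\lambda_k+\eta)^{2H}}.
\end{equation*}
Since $\lambda_k=k^2\pi^2$ and $4H>1+4\beta$, Lemma \ref{Le:eta} (with $a=4\beta$, $b=4H$) gives that $\sum_{k\ge 1}k^{4\beta}/(\lambda_k+\eta)^{2H}\to 0$ as $\eta\to\infty$. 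Hence one can choose $\eta=\eta(\tilde{c},T,p,\beta,H)\ge 0$ so large that
\begin{equation*}
\mathbb{E}\bigl[\|\mathbb{O}^{N,\eta}\|_V^{2}\bigr]\le \frac{m}{10}=\frac{1}{180\,\tilde{c}}.
\end{equation*}

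\noindent\textbf{Step 3 (Chebyshev and Fernique).} By Markov's inequality, $\mathbb{P}(\|\mathbb{O}^{N,\eta}\|_V^{2}>m)\le \mathbb{E}[\|\mathbb{O}^{N,\eta}\|_V^{2}]/m\le \tfrac{1}{10}$. Theorem \ref{Th:Fernique} then yields
\begin{equation*}
\mathbb{E}\Bigl[\exp\bigl(\tilde{c}\,\|\mathbb{O}^{N,\eta}\|_V^{2}\bigr)\Bigr]
=\mathbb{E}\Bigl[\exp\bigl(\|\mathbb{O}^{N,\eta}\|_V^{2}/(18m)\bigr)\Bigr]<\infty,
\end{equation*}
and multiplying by the deterministic factor $e^{\tilde{c}T}$ (which accounts for the constant $1$ in the integrand) produces the claimed bound.

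\noindent\textbf{Main obstacle.} The delicate point is Step 2: one must exploit Lemma \ref{Le:O eta} to see that the single-time $L^\infty$-moment can be made uniformly small in $t\in[0,T]$ by sending $\eta\to\infty$, which requires the gap $4H>4\beta+1$ (equivalently $\beta<H-\tfrac14$) in order to apply Lemma \ref{Le:eta}. Everything else is routine once the Banach-space framework of Step 1 is in place; the finite-dimensionality of $U^N$ makes the verification of Fernique's Gaussian hypothesis essentially automatic.
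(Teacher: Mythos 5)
Your proof is correct, and it rests on exactly the same ingredients as the paper's: the moment bound of Lemma \ref{Le:O eta}, the vanishing of the eigenvalue sum as $\eta\to\infty$ (Lemma \ref{Le:eta}), Markov's inequality to verify the smallness hypothesis, and Fernique's theorem (Theorem \ref{Th:Fernique}). The one structural difference is where Fernique is applied. The paper applies it pointwise in time, to the random variable $\mathbb{O}^{N,\eta}_t$ for each fixed $t$, obtaining $\sup_{t\in[0,T]}\mathbb{E}\bigl[\exp\bigl(\tilde{c}\|\mathbb{O}^{N,\eta}_t\|_{L^\infty}^2\bigr)\bigr]<\infty$, and then uses Jensen's inequality to move the exponential inside the time integral. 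You instead apply Fernique once, to the whole path regarded as an element of the Bochner space $V=L^2\bigl(0,T;C(\overline{\mathcal{D}})\bigr)$, which directly controls $\exp\bigl(\tilde{c}\int_0^T\|\mathbb{O}^{N,\eta}_t\|_{L^\infty}^2\,dt\bigr)$ with no Jensen step. This buys a small amount of robustness: the paper's Jensen step, as written, should really read $\exp\bigl(\int_0^T h\,dt\bigr)=\exp\bigl(\tfrac1T\int_0^T Th\,dt\bigr)\le\tfrac1T\int_0^T e^{Th}\,dt$, i.e.\ a factor $T$ appears in the exponent (harmless, since one can run the argument with $T\tilde{c}$ in place of $\tilde{c}$ and enlarge $\eta$ accordingly, but it is a genuine slip); your path-space formulation sidesteps this entirely. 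The cost is the extra (routine, and correctly handled) verification that $\mathbb{O}^{N,\eta}$ is a centered $V$-valued Gaussian, which is automatic here because only the finitely many modes $w_1^H,\dots,w_N^H$ enter. Two cosmetic remarks: your restriction $\beta<H-\tfrac14$ is vacuous, since $H>\tfrac12$ and $\beta<\tfrac14$ already give $4H>2>4\beta+1$; and, as in the paper, only the finite sum $\sum_{k=1}^N$ actually appears for $\mathbb{O}^{N,\eta}$, so Lemma \ref{Le:eta} is not even strictly needed — the finite sum tends to $0$ as $\eta\to\infty$ term by term.
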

\begin{proof}
	For any $\tilde{c}>0$, $\beta \in (0, \frac{1}{4})$ and $H \in (\frac{1}{2} , 1)$, through Lemma \ref{Le:eta}, there exists $\eta$ such that 
	\begin{equation*}
		18 \tilde{c} C(\beta, H) \left( \sum_{k=1}^{N} \frac{ k^{4\beta} }{(\lambda_k+\eta)^{2H}} \right)
	\leq \frac{1}{10}. 
	\end{equation*}
	By Markov's inequality and Lemma \ref{Le:O eta}, we have 
	\begin{equation*}
		\mathbb{P} \left( \| \mathbb{O}^{N, \eta}_t \|_{L^\infty}^2 \geq \frac{1}{18 \tilde{c}} \right) 
		\leq 18 \tilde{c} \ \mathbb{E} \left[\| \mathbb{O}^{N, \eta}_t \|_{L^\infty}^2\right]
		\leq 18 \tilde{c} C(\beta, H) \left( \sum_{k=1}^{N} \frac{ k^{4\beta} }{(\lambda_k+\eta)^{2H}} \right)
		\leq \frac{1}{10}, 
	\end{equation*}
	which satisfies the conditions of Theorem \ref{Th:Fernique}, thus $\mathbb{E} \left[ \exp\left( \tilde{c} \| \mathbb{O}^{N, \eta}_t \|_{L^\infty}^2 \right) \right]  < \infty$. 
	Finally, by Jensen's inequality, we obtain
	\begin{equation*}
		\mathbb{E} \left[ e^{\int_{0}^{T} \tilde{c} \left(\| \mathbb{O}^{N, \eta}_t \|_{L^\infty}^2 + 1 \right) dt} \right] 
		\leq 
		\mathbb{E} \left[ \int_{0}^{T} e^{ \tilde{c} \left(\| \mathbb{O}^{N, \eta}_t \|_{L^\infty}^2 + 1 \right)} dt \right]
		= e^{\tilde{c}} \int_{0}^{T} \mathbb{E} \left[e^{ \tilde{c} \| \mathbb{O}^{N, \eta}_t \|_{L^\infty}^2 }\right] dt
		< \infty, 
	\end{equation*}
	which completes the proof. 
\end{proof}

\subsection{Convergence of the spatial discretization}
Introduce $\tilde{u}(t) := u(t) - \mathcal{O}_t$ and $\tilde{u}^N(t) := u^N(t) - \mathcal{O}^N_t$, which respectively solve 
\begin{align}
	\tilde{u}(t) &= S(t) u_0 + \int_{0}^{t} S(t-s) f(\tilde{u}(s) + \mathcal{O}_s) ds, \label{Eq:mild solution u2} \\ 
	\tilde{u}^N(t) &= S_N(t) P_N u_0 + \int_{0}^{t} S_N(t-s) P_N f(\tilde{u}^N(s) + \mathcal{O}^N_s) ds. \label{Eq:mild solution uN2}
\end{align}
In order to determine the convergence of $\tilde{u}^N(t)$ to $\tilde{u}(t)$ when the discretization parameter $N$ tends to $\infty$, an auxiliary process $P_N \tilde{u}(t)$ is needed, given by 
\begin{equation*}
	P_N \tilde{u}(t) = S_N(t) P_N u_0 + \int_{0}^{t} S_N(t-s) P_N f(\tilde{u}(s) + \mathcal{O}_s) ds. 
\end{equation*}

\begin{Theorem}\label{Th:convergence in probability}
	Under Assumption \ref{Asp:initial value}, for any $a > 0$, it holds that 
	\begin{equation*}
		\lim\limits_{N \rightarrow \infty} \mathbb{P} \left( \sup\limits_{t \in [0,T]}  \| \tilde{u}(t) - \tilde{u}^N(t) \|^2 > a \right) = 0,  
	\end{equation*}
	where $\tilde{u}(t)$ and $\tilde{u}^N(t)$ are given by \eqref{Eq:mild solution u2} and \eqref{Eq:mild solution uN2}, respectively.
\end{Theorem}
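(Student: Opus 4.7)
The plan is to split $\tilde u-\tilde u^N$ into a spectral projection part plus a ``matched--dimension'' remainder, and to control the remainder by a Gronwall inequality localized via a stopping time, as the phrase ``stopping time technique'' in the introduction anticipates.

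First, I would write
\[\tilde u(t)-\tilde u^N(t) \;=\; (I-P_N)\tilde u(t) \;+\; \bigl(P_N\tilde u(t)-\tilde u^N(t)\bigr) \;=:\; J_1^N(t)+J_2^N(t).\]
For $J_1^N$, the projection estimate \eqref{projection} reduces the task to showing $\sup_{t\le T}\|\tilde u(t)\|_{\dot H^\alpha}<\infty$ almost surely for some $\alpha>0$; this follows from Assumption \ref{Asp:initial value}, the mild identity \eqref{Eq:mild solution u2}, and the smoothing estimates \eqref{Est:semigroup 1} together with \eqref{Est:S B 1}--\eqref{Est:S B 2}. Consequently $\sup_{t\le T}\|J_1^N(t)\|\le \lambda_{N+1}^{-\alpha/2}\sup_{t\le T}\|\tilde u(t)\|_{\dot H^\alpha}\to 0$ almost surely, hence in probability.

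For $J_2^N$, subtracting the defining mild equations for $P_N\tilde u$ and $\tilde u^N$ gives
\[J_2^N(t)=\int_0^t S_N(t-s)P_N\bigl(f(u(s))-f(u^N(s))\bigr)\,ds,\qquad u:=\tilde u+\mathcal O,\ u^N:=\tilde u^N+\mathcal O^N.\]
Using the Burgers identity $f(u)-f(u^N)=\tfrac{1}{2}\partial_x\bigl((u-u^N)(u+u^N)\bigr)$ and the smoothing bound $\|S_N(t-s)P_N\partial_x\psi\|\le C(t-s)^{-3/4}\|\psi\|_{L^1}$ (a consequence of \eqref{Est:semigroup 1}--\eqref{Est:semigroup 2} with $s_1=-1,\ s_2=0,\ r=1$ composed with the semigroup half-step), Cauchy--Schwarz in space yields
\[\|J_2^N(t)\|\le C\int_0^t(t-s)^{-3/4}\|u(s)-u^N(s)\|\cdot\|u(s)+u^N(s)\|\,ds.\]
Substituting $u-u^N=J_1^N+J_2^N+(\mathcal O-\mathcal O^N)$ isolates $J_2^N$ on the left and exhibits the projection error $J_1^N$ and the convolution difference $\mathcal O-\mathcal O^N$ as source terms on the right.

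The quadratic factor $\|u+u^N\|$ is not a priori bounded, so the Gronwall loop is closed only after localizing by the stopping time
\[\tau_{N,R}:=\inf\Bigl\{t\in[0,T]:\|u(t)\|\vee\|u^N(t)\|\vee\|\mathcal O_t\|_{L^\infty}\vee\|\mathcal O^N_t\|_{L^\infty}\ge R\Bigr\}\wedge T.\]
On $\{t\le\tau_{N,R}\}$ the factor $\|u+u^N\|$ is bounded by $2R$, and the singular Gronwall lemma for the weakly singular kernel $(t-s)^{-3/4}$ (Henry's inequality) gives
\[\sup_{t\le\tau_{N,R}}\|\tilde u(t)-\tilde u^N(t)\|\le C(R,T)\Bigl(\sup_{t\le T}\|J_1^N(t)\|+\sup_{t\le T}\|\mathcal O_t-\mathcal O^N_t\|\Bigr),\]
whose right-hand side tends to $0$ in probability by the first step and Corollary \ref{Cor:O - ON}. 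The splitting
\[\mathbb P\Bigl(\sup_{t\le T}\|\tilde u-\tilde u^N\|^2>a\Bigr)\le \mathbb P(\tau_{N,R}<T)+\mathbb P\Bigl(\sup_{t\le\tau_{N,R}}\|\tilde u-\tilde u^N\|^2>a\Bigr)\]
then reduces the theorem to showing $\lim_{R\to\infty}\sup_{N}\mathbb P(\tau_{N,R}<T)=0$, which is the main technical obstacle. This uniform tightness is obtained by combining the pathwise estimates \eqref{Est:u eta}--\eqref{Est:int u eta} for $\tilde u$ and their semi-discrete analogue \eqref{Est:u eta N} for $\tilde u^N$ with the moment bounds of Lemma \ref{Le:O eta} and the exponential integrability of the shifted convolutions $\mathbb O^{\eta}_\cdot$ and $\mathbb O^{N,\eta}_\cdot$ from Lemma \ref{Le:exponential int O} (with $\eta>0$ chosen as in the proof of that lemma). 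These inputs yield $N$-uniform moments of the norms defining $\tau_{N,R}$, so Markov's inequality produces a bound on $\mathbb P(\tau_{N,R}<T)$ that is uniform in $N$ and vanishes as $R\to\infty$. Choosing first $R$ large and then $N$ large completes the proof.
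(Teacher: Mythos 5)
Your overall architecture coincides with the paper's: the same splitting into $(I-P_N)\tilde u$ and $P_N\tilde u-\tilde u^N$, a localized Gronwall argument on the remainder, and the order of limits ``choose $\mathcal R$ first, then $N$''. The one genuine methodological difference is that you estimate the remainder through the mild formulation and a singular Gronwall (Henry) inequality with kernel $(t-s)^{-3/4}$, which lets you close the loop using only the $L^2$ bound $\|u+u^N\|\le 2R$; the paper instead differentiates $\|e^N(t)\|^2$ and runs a standard Gronwall whose exponent is $\int_0^t\|g(s)\|_{L^\infty}^2\,ds$, which is why its stopping times must control $\int_0^t\|\tilde u(s)\|_{L^\infty}^2\,ds$ and $\int_0^t\|\tilde u^N(s)\|_{L^\infty}^2\,ds$. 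Your variant is arguably cleaner at that step. Two small repairs are needed, though: Corollary \ref{Cor:O - ON} controls $\sup_t\mathbb E[\|\mathcal O_t-\mathcal O^N_t\|_{L^\infty}^p]$, not $\mathbb E[\sup_t\|\mathcal O_t-\mathcal O^N_t\|^p]$, so you should keep the convolution difference inside the time integral produced by Henry's inequality (as the paper does in \eqref{Eq:u-uN}) rather than pulling out a supremum; and the terms $\|\mathcal O_t\|_{L^\infty}$, $\|\mathcal O^N_t\|_{L^\infty}$ in your stopping time are superfluous for your Gronwall and should be dropped or replaced by time integrals, since Lemma \ref{Le:O eta} gives no control of the running supremum.

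The one place where your sketch has a real gap is the uniform tightness $\lim_{R\to\infty}\sup_N\mathbb P(\tau_{N,R}<T)=0$. You assert that the pathwise estimates together with Lemmas \ref{Le:O eta} and \ref{Le:exponential int O} yield ``$N$-uniform moments of the norms defining $\tau_{N,R}$'', so that plain Markov applies. But second moments of $\|u(t)\|$ and $\|u^N(t)\|$ are not available at this stage: the pathwise bounds \eqref{Est:u eta} and \eqref{Est:u eta N} carry the factor $\exp\left(\int_0^t(\|\mathbb O^{\eta}_s\|_{L^\infty}^2+1)\,ds\right)$, and Lemma \ref{Le:exponential int O} is stated only for the finite-dimensional convolution $\mathbb O^{N,\eta}$, with no version for the exact solution's convolution. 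The paper circumvents exactly this obstruction by applying Markov's inequality to $\log\|\tilde u(\cdot)\|^2$ and to $\log\bigl(\tfrac12\int_0^t\|\tilde u(s)\|_{L^\infty}^2\,ds\bigr)$, so that the exponential factor becomes an additive term and only polynomial moments of the stochastic convolution are required; the resulting bound is of order $1/\log\mathcal R$ and is uniform in $N$ because the moments of $\mathcal O^N$ are dominated by those of $\mathcal O$. To complete your argument you must either adopt that logarithmic Markov device, or first prove the infinite-dimensional analogue of Lemma \ref{Le:exponential int O} (which does hold, via Lemma \ref{Le:eta}, since $\sum_{k\ge1}k^{4\beta}(\lambda_k+\eta)^{-2H}\to0$ as $\eta\to\infty$) and then take expectations in \eqref{Est:u eta}; as written, the tightness step is not justified by the lemmas you cite.
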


\begin{proof}
	For $\alpha \in (0, \frac{1}{4})$, using the triangle inequality and \eqref{projection},  we have  
	\begin{equation}\label{Est:u2 - uN2}
			\| \tilde{u}(t) - \tilde{u}^N(t) \| 
			\leq \| \tilde{u}(t) - P_N \tilde{u}(t) \| + \| P_N \tilde{u}(t) - \tilde{u}^N(t) \| \leq \lambda_{N+1}^{-\frac{\alpha}{2}}  \| \tilde{u}(t) \|_{\dot{H}^{\alpha}} + \|e^N(t) \|, 
	\end{equation}
	in which $e^N(t) := P_N \tilde{u}(t) - \tilde{u}^N(t)$ solves 
	\begin{align*}
		\frac{d}{dt} e^N(t) 
		= -A_N e^N(t) + P_N f(\tilde{u}(t) + \mathcal{O}_t) - P_N f(\tilde{u}^N(t) + \mathcal{O}^N_t), \qquad e^N(0)=0. 
	\end{align*}
	For sake of simplicity, we denote 
	$$g(t) := \tilde{u}(t) + \mathcal{O}_t + \tilde{u}^N(t) + \mathcal{O}^N_t.$$ 
	Then it follows from the identity $\frac{1}{2} \frac{d}{dt} \| e^N(t) \|^2 = \langle e^N(t) , \frac{d}{dt} e^N(t) \rangle$, the integral by parts, and Young's inequality that 
	\begin{align*}
		\frac{1}{2} \frac{d}{dt} \| e^N(t) \|^2 
		&= - \| e^N(t) \|_{\dot{H}^1}^2 - \frac{1}{2} \left\langle \frac{\partial}{\partial x} e^N(t) , (\tilde{u}(t) + \mathcal{O}_t)^2 - (\tilde{u}^N(t) + \mathcal{O}^N_t)^2 \right\rangle  \\
		&\leq - \| e^N(t) \|_{\dot{H}^1}^2 + \frac{1}{4} \| e^N(t) \|_{\dot{H}^1}^2 + \frac{1}{2} \| g(t) \left( \tilde{u}(t) - \tilde{u}^N(t) \right) \|^2 
		+ \frac{1}{2} \| g(t) \left( \mathcal{O}_t - \mathcal{O}^N_t \right) \|^2 \\
		&\leq \frac{1}{2} \| g(t) \|_{L^\infty}^2 \| \tilde{u}(t) - \tilde{u}^N(t) \|^2 + \frac{1}{2} \| g(t) \|^2 \| \mathcal{O}_t - \mathcal{O}^N_t \|_{L^\infty}^2, 
	\end{align*}
	which, together with $e^N(0) = 0$, implies that 
	\begin{equation*}
		\| e^N(t) \|^2
		\leq 
		\int_{0}^{t} \| g(s) \|_{L^\infty}^2 \| \tilde{u}(s) - \tilde{u}^N(s) \|^2 ds 
		+ \int_{0}^{t} \| g(s) \|^2 \| \mathcal{O}_s - \mathcal{O}^N_s \|_{L^\infty}^2 ds. 
	\end{equation*}
	Substituting the above estimate into \eqref{Est:u2 - uN2}, we obtain 
	\begin{equation*}
		\| \tilde{u}(t) - \tilde{u}^N(t) \|^2 
		\leq 2 \lambda_{N+1}^{-\alpha}  \| \tilde{u}(t) \|_{\dot{H}^{\alpha}}^2 
		+ 2 \int_{0}^{t} \| g(s) \|_{L^\infty}^2 \| \tilde{u}(s) - \tilde{u}^N(s) \|^2 ds 
		+ 2 \int_{0}^{t} \| g(s) \|^2 \| \mathcal{O}_s - \mathcal{O}^N_s \|_{L^\infty}^2 ds.  
	\end{equation*}
	Then, it follows from Gronwall's inequality and H\"older's inequality that  
	\begin{equation}\label{Eq:u-uN}
		\begin{aligned}
			\| \tilde{u}(t) - \tilde{u}^N(t) \|^2 
			&\leq \left( 2 \lambda_{N+1}^{-\alpha}  \| \tilde{u}(t) \|_{\dot{H}^{\alpha}}^2 
			+ 2 \int_{0}^{t} \| g(s) \|^2 \| \mathcal{O}_s - \mathcal{O}^N_s \|_{L^\infty}^2 ds \right) e^{2\int_{0}^{t} \| g(s) \|_{L^\infty}^2 ds} \\ 
			&\leq \Bigg[ 2 \lambda_{N+1}^{-\alpha}  \| \tilde{u}(t) \|_{\dot{H}^{\alpha}}^2 
			+ 8 \int_{0}^{t} \left( \| \tilde{u}(s) \|^2 + \| \tilde{u}^N(s) \|^2 \right)\| \mathcal{O}_s - \mathcal{O}^N_s \|_{L^\infty}^2 ds \\
			&\quad + 16 \left( \int_{0}^{t} \| \mathcal{O}_s \|^4 ds \right)^{\frac{1}{2}} \left( \int_{0}^{t} \| \mathcal{O}_s - \mathcal{O}^N_s \|_{L^\infty}^4 ds \right)^{\frac{1}{2}} \Bigg]  e^{2\int_{0}^{t} \| g(s) \|_{L^\infty}^2 ds} . 
		\end{aligned}
	\end{equation}
	
	Next we adopt the stopping time technique to verify the desired conclusion. 
	For $N \in \mathbb{N}$, $\alpha \in (0, \frac{1}{4})$ and a positive real number $\mathcal{R}$, we introduce a sequence of stopping times as follows: 
	\begin{align*}
		\tau_1 &:= T \wedge \inf \{ t \geq 0: \int_{0}^{t} \|\tilde{u}(s)\|_{L^\infty}^2 ds \geq \mathcal{R} \}, \\
		\tau_2 &:= T \wedge \inf \{ t \geq 0: \|\tilde{u}(t)\|^2 \geq \mathcal{R} \}, \\
		\tau_3 &:= T \wedge \inf \{ t \geq 0: \int_{0}^{t} \|\mathcal{O}_s\|_{L^\infty}^{\frac{14+4\alpha}{1-2\alpha}} ds \geq \mathcal{R} \}, \\
		\tau^N_1 &:= T \wedge \inf \{ t \geq 0: \int_{0}^{t} \|\tilde{u}^N(s)\|_{L^\infty}^2 ds \geq \mathcal{R} \}, \\
		\tau^N_2 &:= T \wedge \inf \{ t \geq 0: \|\tilde{u}^N(t)\|^2 \geq \mathcal{R} \},\\
		\tau^N_3 &:= T \wedge \inf \{ t \geq 0: \int_{0}^{t} \|\mathcal{O}^N_s\|_{L^\infty}^2 ds \geq \mathcal{R} \},
	\end{align*}
	where we set $\inf \varnothing = + \infty$. 
	Denote 
	$$\tau := \tau_{N,\mathcal{R}}(\omega) := \min \left\{ \tau_1, \tau_2, \tau_3, \tau^N_1, \tau^N_2, \tau_3^N \right\}. $$
	Obviously, the stopping time $\tau: \Omega \rightarrow [0,T]$ depends on parameters $N$ and $\mathcal{R}$. 
	For $\tilde{u}(t)$ given by \eqref{Eq:mild solution u2}, taking $\zeta=\alpha$ in \eqref{Est:S B 1}, then using H\"older's inequality, we get 
	\begin{align*}
		\|\tilde{u}(t)\|_{\dot{H}^\alpha} 
		&\leq \|S(t) u_0\|_{\dot{H}^\alpha} + \frac{1}{2} \int_{0}^{t} \left\| S(t-s) \frac{\partial}{\partial x} \left( \tilde{u}(s) + \mathcal{O}_s \right)^2 \right\|_{\dot{H}^\alpha} ds \\
		&\leq \|u_0\|_{\dot{H}^\alpha} + C(\alpha) \int_{0}^{t} \left(t-s\right)^{-\frac{\alpha}{2}-\frac{3}{4}}  \left\| \frac{\partial}{\partial x} \left( \tilde{u}(s) + \mathcal{O}_s \right)^2 \right\|_{W^{-1 , 1}} ds \\ 
		&\leq \|u_0\|_{\dot{H}^\alpha} + 2 C(\alpha) \int_{0}^{t}  \left(t-s\right)^{-\frac{\alpha}{2}-\frac{3}{4}} \| \tilde{u}(s) \|^2 ds
		+ 2 C(\alpha) \int_{0}^{t}  \left(t-s\right)^{-\frac{\alpha}{2}-\frac{3}{4}} \| \mathcal{O}_s \|^2 ds \\
		&\leq \|u_0\|_{\dot{H}^\alpha} + 2C(\alpha) \int_{0}^{t}  \left(t-s\right)^{-\frac{\alpha}{2}-\frac{3}{4}} \| \tilde{u}(s) \|^2 ds \\
		&\quad + 2C(\alpha) \left( \int_{0}^{t}  \left(t-s\right)^{-\frac{7+2\alpha}{8} } ds \right)^{\frac{6+4\alpha}{7+2\alpha}} \left( \int_{0}^{t} \| \mathcal{O}_s \|^{\frac{14+4\alpha}{1-2\alpha}} ds \right)^{\frac{1-2\alpha}{7+2\alpha}}. 
	\end{align*}
	In view of the above estimate and the definition of $\tau$, for $\alpha \in (0, \frac{1}{4})$, we have 
	\begin{equation}\label{Eq:u alpha}
		\begin{aligned}
			\sup\limits_{t \leq \tau} \| \tilde{u}(t) \|_{\dot{H}^{\alpha}}^2 
			&\leq \sup\limits_{t \leq \tau} \Bigg\{ 3 \|u_0\|_{\dot{H}^\alpha}^2 + 12 (C(\alpha))^2 \left( \int_{0}^{t}  \left(t-s\right)^{-\frac{\alpha}{2}-\frac{3}{4}  } \| \tilde{u}(s) \|^2 ds \right)^2 \\
			&\quad + 12 (C(\alpha))^2 \left( \int_{0}^{t}  \left(t-s\right)^{-\frac{7+2\alpha}{8} } ds \right)^{\frac{12+8\alpha}{7+2\alpha}} \left( \int_{0}^{t} \| \mathcal{O}_s \|^{\frac{14+4\alpha}{1-2\alpha}} ds \right)^{\frac{2-4\alpha}{7+2\alpha}}  \Bigg\} \\
			&\leq 3 \|u_0\|_{\dot{H}^\alpha}^2 + C(\mathcal{R}, T, \alpha). 
		\end{aligned}
	\end{equation}
	Hence, from \eqref{Eq:u-uN} and \eqref{Eq:u alpha}, then by the definition of $\tau$ and H\"older's inequality, we have 
	\begin{equation}\label{Est:u2 - uN2 2}
		\sup\limits_{t \leq \tau}  \| \tilde{u}(t) - \tilde{u}^N(t) \|^2  
		\leq C(\mathcal{R}, T, \alpha) \left( \lambda_{N+1}^{-\alpha} \left(\| u_0 \|_{\dot{H}^{\alpha}}^2 + 1\right)
		+ \left( \int_{0}^{T} \| \mathcal{O}_s - \mathcal{O}^N_s \|_{L^\infty}^4 ds \right)^{\frac{1}{2}} \right). 
	\end{equation}
	We claim that  
	\begin{equation}\label{tau<T}
		\lim\limits_{\mathcal{R} \rightarrow \infty} \mathbb{P} (\tau < T) = 0 \quad  \text{holds for any } N \in \mathbb{N} . 
	\end{equation}
	Obviously, it suffices to prove \eqref{tau<T} for each stopping times $\tau_i$ and $\tau_i^N$, $i = 1, 2, 3$.

	For $\tau_1$, by $\dot{H}^1 \hookrightarrow L^\infty(\mathcal{D})$, and taking $\eta = 0$ in the estimates \eqref{Est:u eta} and \eqref{Est:int u eta}, we have 
	\begin{align*} 
		&\frac{1}{2} \int_{0}^{t} \| \tilde{u}(s) \|_{L^\infty}^2 ds 
		\leq \frac{1}{2} \int_{0}^{t} \| \tilde{u}(s) \|_{\dot{H}^1}^2 ds \\
		\leq \ & \|u_0\|^2 + \int_{0}^{t} \left(\| \mathcal{O}_s \|_{L^\infty}^2 + 1 \right)  \| \tilde{u}(s)\|^2  ds
		+ \frac{1}{2} \int_{0}^{t} \| \mathcal{O}_s \|_{L^4}^4 ds \\
		\leq \ &  \|u_0\|^2
		+ \frac{1}{2} \int_{0}^{t} \| \mathcal{O}_s \|_{L^4}^4 ds 
		+ \int_{0}^{t} \left(\| \mathcal{O}_s \|_{L^\infty}^2 + 1 \right) \left( \|u_0\|^2 + \frac{1}{2} \int_{0}^{s} \| \mathcal{O}_r \|_{L^4}^4 dr \right)
		e^{\int_{0}^{s} \left(\| \mathcal{O}_r \|_{L^\infty}^2 + 1 \right) dr}  ds \\
		\leq \ &  \|u_0\|^2
		+ \frac{1}{2} \int_{0}^{t} \| \mathcal{O}_s \|_{L^4}^4 ds 
		+ \left(\int_{0}^{t} \left(\| \mathcal{O}_s \|_{L^\infty}^2 + 1 \right) ds \right) \left( \|u_0\|^2 + \frac{1}{2} \int_{0}^{t} \| \mathcal{O}_s \|_{L^4}^4 ds \right)
		e^{\int_{0}^{t} \left(\| \mathcal{O}_s \|_{L^\infty}^2 + 1 \right) ds} \\
		\leq \ &  \Bigg[ \|u_0\|^2
		+ \frac{1}{2} \left( \int_{0}^{t} \| \mathcal{O}_s \|_{L^4}^4 ds \right) \left( \int_{0}^{t} \| \mathcal{O}_s \|_{L^\infty}^2 ds + 1 + t \right) \\ 
		& + \|u_0\|^2 \int_{0}^{t} \left(\| \mathcal{O}_s \|_{L^\infty}^2 + 1 \right) ds  \Bigg]
		e^{\int_{0}^{t} \left(\| \mathcal{O}_s \|_{L^\infty}^2 + 1 \right) ds}. 
	\end{align*}
	It follows from Jensen's inequality, $ab \leq \frac{1}{2} a^2 + \frac{1}{2} b^2$, and H\"older's inequality that  
	\begin{align*} 
		&\mathbb{E} \left[ \log \left( \dfrac{1}{2} \int_{0}^{t} \| \tilde{u}(s) \|_{L^\infty}^2 ds \right) \right] \\
		\leq \ &  \mathbb{E} \Bigg[ \log \Bigg\{ \|u_0\|^2
		+ \frac{1}{2} \left( \int_{0}^{t} \| \mathcal{O}_s \|_{L^4}^4 ds \right) \left( \int_{0}^{t} \| \mathcal{O}_s \|_{L^\infty}^2 ds + 1 + t \right) \\
		& + \|u_0\|^2 \int_{0}^{t} \left(\| \mathcal{O}_s \|_{L^\infty}^2 + 1 \right) ds  \Bigg\} + \int_{0}^{t} \left(\| \mathcal{O}_s \|_{L^\infty}^2 + 1 \right) ds \Bigg] \\
		\leq \ &  \log \Bigg\{ \mathbb{E} \left[\|u_0\|^2\right]
		+ \frac{1}{2} \mathbb{E} \left[ \left( \int_{0}^{t} \| \mathcal{O}_s \|_{L^4}^4 ds \right) \left( \int_{0}^{t} \| \mathcal{O}_s \|_{L^\infty}^2 ds + 1 + t \right) \right] \\
		& + \mathbb{E} \left[ \|u_0\|^2 \int_{0}^{t} \left(\| \mathcal{O}_s \|_{L^\infty}^2 + 1 \right) ds \right] \Bigg\} + \int_{0}^{t} \mathbb{E} \left[\| \mathcal{O}_s \|_{L^\infty}^2 + 1 \right] ds \\
		\leq \ &  \log \Bigg\{ \mathbb{E} \left[\|u_0\|^2\right]
		+ \frac{t}{4} \int_{0}^{t} \mathbb{E} \left[\| \mathcal{O}_s \|_{L^4}^8\right] ds  + \frac{3}{4}  \left( t \int_{0}^{t} \mathbb{E} \left[\|\mathcal{O}_s \|_{L^\infty}^4\right] ds + 1 + t^2 \right)  \\ 
		& + \frac{1}{2} \mathbb{E} \left[\|u_0\|^4\right] + t \int_{0}^{t} \mathbb{E} \left[\|\mathcal{O}_s \|_{L^\infty}^2\right] ds + t^2 \Bigg\} + \int_{0}^{t} \mathbb{E} \left[\| \mathcal{O}_s \|_{L^\infty}^2\right] ds + t. 
	\end{align*}
	By Markov's inequality and the above estimate, we obtain 
	\begin{align*}
		& \mathbb{P}(\tau_1 < T) 
		= \mathbb{P}\left( \log \Big( \frac{1}{2} \int_{0}^{T} \|\tilde{u}(s)\|_{L^\infty}^2 ds \Big) \geq \log \left( \tfrac{\mathcal{R}}{2} \right) \right) \\
		\leq \ &  \frac{1}{\ \log \left( \tfrac{\mathcal{R}}{2} \right) \ } \log \Bigg\{ \mathbb{E} \left[\|u_0\|^2\right]
		+ \frac{T}{4} \int_{0}^{T} \mathbb{E} \left[\| \mathcal{O}_s \|_{L^4}^8\right] ds  + \frac{3T}{4}  \int_{0}^{T} \mathbb{E} \left[\| \mathcal{O}_s \|_{L^\infty}^4\right] ds + \frac{3}{4} \left( 1 + T^2 \right)  \\ 
		& + \frac{1}{2} \mathbb{E} \left[\|u_0\|^4\right] + T \int_{0}^{T} \mathbb{E} \left[\| \mathcal{O}_s \|_{L^\infty}^2\right] ds + T^2 \Bigg\} + \frac{1}{\ \log \left( \tfrac{\mathcal{R}}{2} \right) \ } \left( \int_{0}^{T} \mathbb{E} \left[\| \mathcal{O}_s \|_{L^\infty}^2\right] ds + T \right). 
	\end{align*}
	This verifies $\mathbb{P}(\tau_1 < T) \rightarrow 0$ when $\mathcal{R} \rightarrow \infty$ due to the boundedness of moments of the stochastic convolution shown in Lemma \ref{Le:O eta} with $\eta = 0$. Similar estimations can be applied to derive that: for any $N \in \mathbb{N}$, $\mathbb{P}(\tau_1^N < T) \rightarrow 0$ when $\mathcal{R} \rightarrow \infty$. 
	
	For $\tau_2$, since the characteristic function $\chi_{\{\tau_2 < T\}}\leq1$ and
	$$\log \left(\sup_{t\in[0,T]}\|\tilde{u}(t)\|^2\right) \leq \log \left(\sup_{t\in[0,T]}\|\tilde{u}(t)\|^2+1\right),$$
	we can apply \eqref{Est:u eta} with $\eta=0$ and Jensen's inequality to obtain
	\begin{align*}
		\mathbb{P}(\tau_2 < T) 
		&= \mathbb{E}\left[ \chi_{\{\tau_2 < T\}} \right]
		\leq \mathbb{E}\left[ \chi_{\{\tau_2 < T\}} \frac{\log \left(\|\tilde{u}(\tau_2)\|^2\right)}{\log \mathcal{R}} \right]
		\leq \mathbb{E}\left[ \chi_{\{\tau_2 < T\}} \frac{ \log \left(\sup_{t \in [0,T]} \|\tilde{u}(t)\|^2 \right)}{\log \mathcal{R}} \right] \\
		& \leq \mathbb{E}\left[ \frac{ \log \left(\sup_{t \in [0,T]} \|\tilde{u}(t)\|^2 + 1 \right)}{\log \mathcal{R}} \right] \\
		&\leq \frac{1}{\log \mathcal{R}} \mathbb{E}\left[ \log \left\{ \left( \|u_0\|^2 + \frac{1}{2} \int_{0}^{T} \| \mathcal{O}_t \|_{L^4}^4 dt + 1 \right)
		e^{\int_{0}^{T} \left(\| \mathcal{O}_t \|_{L^\infty}^2 + 1 \right) dt} \right\} \right] \\
		&\leq \frac{1}{\log \mathcal{R}} \left( \log \left\{ \mathbb{E} \left[\|u_0\|^2\right] + \frac{1}{2} \int_{0}^{T} \mathbb{E} \left[\| \mathcal{O}_t \|_{L^4}^4\right] dt + 1 \right\} + \int_{0}^{T} \left( \mathbb{E} \left[\| \mathcal{O}_t \|_{L^\infty}^2\right] + 1 \right) dt \right), 
	\end{align*}
	which, combined with Lemma \ref{Le:O eta} with $\eta = 0$, implies that $\mathbb{P}(\tau_2 < T) \rightarrow 0$ when $\mathcal{R} \rightarrow \infty$. 
	A similar result for $\tau^N_2$ can be proved analogously.

	For $\tau_3$, since $t \mapsto \int_{0}^{t} \|\mathcal{O}_s\|_{L^\infty}^p ds$ is continuous and monotonously increasing, through Markov's inequality and taking $p = \frac{14+4\alpha}{1-2\alpha}$, we have 
	\begin{equation*}
		\mathbb{P}(\tau_3 < T) = \mathbb{P} \left( \int_{0}^{T} \|\mathcal{O}_s\|_{L^\infty}^p ds > \mathcal{R} \right) \leq \frac{\mathbb{E} \left[\int_{0}^{T} \|\mathcal{O}_s\|_{L^\infty}^p ds\right]}{\mathcal{R}} \xrightarrow{\mathcal{R} \rightarrow \infty} 0, 
	\end{equation*} 
	in which the boundedness of $\sup_{t \in [0,T]}\mathbb{E} \left[\|\mathcal{O}_t\|_{L^\infty}^p\right]$ proved in Lemma \ref{Le:O eta} with $\eta = 0$ is used. 
	The case of $\tau^N_3$ is the analogous consequence. Consequently, we verify the claim \eqref{tau<T}. 
	
	For any $a>0$, we have 
	\begin{align*}
		&\mathbb{P} \left( \sup\limits_{t \in [0,T]}  \| \tilde{u}(t) - \tilde{u}^N(t) \|^2 > a \right) \\
		= \ & \mathbb{P} \left( \sup\limits_{t \in [0,T]}  \| \tilde{u}(t) - \tilde{u}^N(t) \|^2 > a \ \text{and} \ \tau = T \right) 
		+ \mathbb{P} \left( \sup\limits_{t \in [0,T]}  \| \tilde{u}(t) - \tilde{u}^N(t) \|^2 > a \ \text{and} \ \tau < T \right) \\
		\leq \ & \mathbb{P} \left( \sup\limits_{t \in [0,T]} \| \tilde{u}(t) - \tilde{u}^N(t) \|^2 > a \ \text{and} \ \tau = T \right) 
		+ \mathbb{P} \left( \tau < T \right). 
	\end{align*}
	For the first probability, replacing $\tau$ by $T$ in \eqref{Est:u2 - uN2 2}, then by Markov's inequality and Corollary \ref{Cor:O - ON}, we obtain 
	\begin{align*}
		&\mathbb{P} \left( \sup\limits_{t \in [0,T]}  \| \tilde{u}(t) - \tilde{u}^N(t) \|^2 > a \ \text{and} \ \tau = T \right) \\ 
		\leq \ & \frac{C(\mathcal{R}, T, \alpha)}{a} \left[ \lambda_{N+1}^{-\alpha} \left( \mathbb{E}\left[\|u_0\|_{\dot{H}^{\alpha}}^2\right] + 1\right)
		+ \left( \int_{0}^{T} \mathbb{E}\left[\| \mathcal{O}_s - \mathcal{O}^N_s \|_{L^\infty}^4\right] ds \right)^{\frac{1}{2}}\right] \\ 
		\leq \ & \frac{C(\mathcal{R}, H,T,\alpha)}{a} \left( \lambda_{N+1}^{-\alpha}
		+ \lambda_{N+1}^{2\alpha-2H+\frac{1}{2}} \right), 
	\end{align*}
	where $\alpha \in (0, \frac{1}{4})$ and $H \in (\frac{1}{2}, 1)$. 
	From \eqref{tau<T}, for any $\epsilon > 0$ and $N \in \mathbb{N}$, there exists $\mathcal{R}$ independent of $N$ such that 
	\begin{equation*}
		\mathbb{P} \left( \tau < T \right) < \frac{\epsilon}{2}. 
	\end{equation*}
	For the fixed $\mathcal{R}$ and for any $\epsilon>0$, there exists $N \in \mathbb{N}$ depending on $\epsilon$ and $\mathcal{R}$ such that 
	\begin{equation*}
		\mathbb{P} \left( \sup\limits_{t \in [0,T]}  \| \tilde{u}(t) - \tilde{u}^N(t) \|^2 > a \ \text{and} \ \tau = T \right) < \frac{\epsilon}{2}. 
	\end{equation*}
	Thus, the theorem is completely proved. 
\end{proof}

The following lemma means that the convergence in probability and the boundedness of moments of numerical solutions are enough to get the convergence in the strong sense. 
\begin{Lemma}[Proposition 4.1 in \cite{Hutzenthaler2018}]\label{Le:probability to expectation}
	Suppose that $I$ is a non-empty set and $U$ is a separable normed $\mathbb{R}$-vector space. 
	For $X^n: I \times \Omega \rightarrow U$, $n \in \mathbb{N} \cup \{0\}$ satisfying the following conditions, 
	\begin{align*}
		\limsup_{n \rightarrow \infty} \sup_{i \in I} \mathbb{P} \left( \|X_i^n-X_i^0\|_{U} \geq \epsilon \right) = 0, \quad \forall \, \epsilon > 0, \\	
		\limsup_{n \rightarrow \infty} \sup_{i \in I} \mathbb{E} \left[\|X_i^n\|_{U}^p\right] < \infty, \quad p \in (0,\infty), 
	\end{align*}
	it holds that for any $q \in (0,p)$, 
	\begin{align*}
		\limsup_{n \rightarrow \infty} \sup_{i \in I} \mathbb{E} \left[\|X_i^n - X_i^0\|_{U}^q\right] = 0 \quad  \text{and} \quad \sup_{i \in I} \mathbb{E} \left[\|X_i^0\|_{U}^q\right] < \infty. 
	\end{align*}
\end{Lemma}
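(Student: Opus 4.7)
The plan is to combine the two hypotheses via a standard uniform-integrability splitting: where $\|X_i^n - X_i^0\|_U$ is small the integrand is controlled pointwise, and where it is large the probability vanishes uniformly in $i$, with the $L^p$ moment absorbing the lack of pointwise control there. Throughout, uniformity in $i \in I$ will be maintained because each hypothesis is already stated as a supremum over $i$.

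First, I would upgrade the hypothesis to a moment bound for the limit $X_i^0$. Fix $i \in I$. Convergence in probability yields a (sub)sequence $(n_k)$, possibly depending on $i$, along which $X_i^{n_k} \to X_i^0$ almost surely. Fatou's lemma then gives
\begin{equation*}
\mathbb{E}\bigl[\|X_i^0\|_U^p\bigr] \leq \liminf_{k \to \infty}\mathbb{E}\bigl[\|X_i^{n_k}\|_U^p\bigr] \leq \limsup_{n \to \infty} \sup_{j \in I}\mathbb{E}\bigl[\|X_j^n\|_U^p\bigr] =: L < \infty.
\end{equation*}
Taking $\sup_{i \in I}$ on the left shows $\sup_i \mathbb{E}[\|X_i^0\|_U^p] \leq L$, and the inequality $\mathbb{E}[\|X_i^0\|_U^q] \leq (\mathbb{E}[\|X_i^0\|_U^p])^{q/p}$ (Jensen, since $q < p$) yields the second conclusion. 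Combining this with a triangle-type inequality $\|x-y\|_U^p \leq C_p(\|x\|_U^p + \|y\|_U^p)$ (valid with $C_p = 2^{p-1}$ for $p \geq 1$, and $C_p = 1$ otherwise) produces constants $M < \infty$ and $N_0 \in \mathbb{N}$ with $\sup_{i \in I}\mathbb{E}[\|X_i^n - X_i^0\|_U^p] \leq M$ for all $n \geq N_0$.

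Second, for arbitrary $\epsilon > 0$ and $n \geq N_0$, I would split
\begin{equation*}
\mathbb{E}\bigl[\|X_i^n - X_i^0\|_U^q\bigr] = \mathbb{E}\bigl[\|X_i^n - X_i^0\|_U^q \chi_{\{\|X_i^n - X_i^0\|_U < \epsilon\}}\bigr] + \mathbb{E}\bigl[\|X_i^n - X_i^0\|_U^q \chi_{\{\|X_i^n - X_i^0\|_U \geq \epsilon\}}\bigr].
\end{equation*}
The first summand is at most $\epsilon^q$. For the second, Hölder's inequality with conjugate exponents $p/q$ and $p/(p-q)$ (applicable because $q < p$) gives the bound $M^{q/p} \bigl(\sup_{j\in I}\mathbb{P}(\|X_j^n - X_j^0\|_U \geq \epsilon)\bigr)^{(p-q)/p}$. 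Taking $\sup_i$ and then $\limsup_n$ on both sides, and invoking the first hypothesis to kill the probability factor, one obtains $\limsup_n \sup_i \mathbb{E}[\|X_i^n - X_i^0\|_U^q] \leq \epsilon^q$ for every $\epsilon > 0$. Letting $\epsilon \downarrow 0$ delivers the first conclusion.

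The argument is essentially a textbook uniform-integrability proof, so no serious analytical obstacle arises. The one point requiring care, and what I would treat as the main subtlety, is that the hypothesis on moments is only a $\limsup$ rather than an outright supremum over $n$; this forces the argument to be run for $n \geq N_0$ only, which is however sufficient since the conclusion is likewise formulated as a $\limsup$.
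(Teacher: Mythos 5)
Your proof is correct. Note, however, that the paper itself offers no proof of this lemma: it is imported verbatim as Proposition 4.1 of the cited reference \cite{Hutzenthaler2018}, so there is no in-paper argument to compare against. Your argument is the standard uniform-integrability proof and is essentially the one used in that reference: Fatou along an $i$-dependent a.s.-convergent subsequence to transfer the moment bound to $X^0$ (the $i$-dependence is harmless since the supremum over $i$ is taken only after the bound is established pointwise in $i$), Jensen with exponent $q/p<1$ for the second conclusion, and the split of $\mathbb{E}\left[\|X_i^n-X_i^0\|_U^q\right]$ over $\{\|X_i^n-X_i^0\|_U<\epsilon\}$ and its complement with H\"older exponents $p/q$ and $p/(p-q)$ on the latter. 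You also correctly flag the only genuinely delicate points: the moment hypothesis is a $\limsup$ in $n$, so the uniform bound $M$ is only available for $n\geq N_0$ (which suffices), and the constant in $\|x-y\|_U^p\leq C_p\left(\|x\|_U^p+\|y\|_U^p\right)$ must be adjusted according to whether $p\geq 1$. No gaps.
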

By virtue of above preparations, the strong convergence of the spatial semi-discretization is presented as follows.  
\begin{Theorem}\label{thm:convergence of  semi-discrete}
	Under Assumption \ref{Asp:initial value}, it holds that for $p>0$, 
	\begin{equation*}
		\sup\limits_{t \in [0,T]} \mathbb{E} \left[\| u(t) - u^N(t) \|^p\right] \rightarrow 0, \quad \text{as } N \rightarrow \infty, 
	\end{equation*} 
	where $u(t)$ and $u^N(t)$ are given by \eqref{Eq:mild solution u1} and \eqref{Eq:mild solution uN}, respectively. 
\end{Theorem}

\begin{proof}
	It suffices to verify the conditions of Lemma \ref{Le:probability to expectation}. 
	On the one hand, by Theorem \ref{Th:convergence in probability}, 
	\begin{equation*}
		\lim\limits_{N \rightarrow \infty} \mathbb{P} 	\left( \sup\limits_{t \in [0,T]}  \| \tilde{u}(t) - \tilde{u}^N(t) \|^p > a \right) = 0, \quad \forall \, a > 0. 
	\end{equation*}
	On the other hand, it follows from \eqref{Est:u eta N}, H\"older's inequality, Lemma \ref{Le:exponential int O}, and Lemma \ref{Le:O eta} that 
	\begin{equation*}
		\lim\limits_{N \rightarrow \infty} \mathbb{E} \left[\| \tilde{u}^N_\eta(t) \|^p\right] < \infty. 
	\end{equation*}
	In addition, recalling $\tilde{u}^N_{\eta}(t) = u^N(t) - \mathbb{O}^{N, \eta}_t$ and $\tilde{u}^N(t) = u^N(t) - \mathcal{O}^N_t$, which means that $\tilde{u}^N(t) = \tilde{u}^N_{\eta}(t) + \mathbb{O}^{N, \eta}_t - \mathcal{O}^N_t$, and thus 
	\begin{equation*}
		\lim\limits_{N \rightarrow \infty} \mathbb{E} \left[\| \tilde{u}^N(t) \|^p \right]
		\leq \lim\limits_{N \rightarrow \infty} 3^{p-1} \left( \mathbb{E} \left[\|\tilde{u}^N_{\eta}(t) \|^p\right]
		+ \mathbb{E} \left[\|\mathbb{O}^{N, \eta}_t\|^p\right]
		+ \mathbb{E} \left[\|\mathcal{O}^N_t\|^p\right] \right)
		< \infty. 
	\end{equation*} 
	Consequently, through Lemma \ref{Le:probability to expectation}, we obtain 
	\begin{equation*}
		\sup\limits_{t \in [0,T]} \mathbb{E} \left[\|\tilde{u}(t) - \tilde{u}^N(t)\|^p\right] \rightarrow 0, \quad \text{as } N \rightarrow \infty. 
	\end{equation*}
	Finally, by using Corollary \ref{Cor:O - ON}, we immediately obtain the following desired conclusion: 
	\begin{align*}
		&\sup\limits_{t \in [0,T]} \mathbb{E} \left[\|u(t) - u^N(t)\|^p\right]  \\
		= \ & \sup\limits_{t \in [0,T]} \mathbb{E}  \left[\| \tilde{u}(t) + \mathcal{O}_t - \tilde{u}^N(t) - \mathcal{O}^N_t \|^p\right] \\
		\leq \ & 2^{p-1} \sup\limits_{t \in [0,T]} \mathbb{E}  \left[\| \tilde{u}(t) - \tilde{u}^N(t)\|^p\right] + 2^{p-1} \sup\limits_{t \in [0,T]} \mathbb{E} \left[\| \mathcal{O}_t - \mathcal{O}^N_t \|^p\right]  
		\rightarrow 0, \quad \text{as } N \rightarrow \infty, 
	\end{align*}
	which completes the proof. 
\end{proof}

\section{Full discretization}\label{Sec:full-discrete}
In this section we construct a fully space-time discrete scheme to numerically solve the SBE at $t_m$, $m = 0, 1, ..., M$, $M \in \mathbb{N}$, where $t_m = m \tau$ are gridpoints of a uniform partition of $[0,T]$ and $\tau = \frac{T}{M}$ is the temporal step size. 
Based on the spatial discretization presented in Section \ref{Sec:semi-discrete}, we introduce a tamed accelerated exponential Euler scheme, given by $v_{t_0} = P_N u_0$ and for $m = 0, 1, ..., M-1$, 
\begin{equation*}
	v_{t_{m+1}} = S_N(\tau) v_{t_{m}} + \int_{t_{m}}^{t_{m+1}} \frac{S_N(t_{m+1}-s) P_N f(v_{t_{m}})}{1+\tau^{\theta}\|v_{t_{m}}\|_{\dot{H}^{\varrho}} ^2 + \tau^{\theta}\|\mathcal{O}^N_{t_m}\|_{\dot{H}^{\varrho}}^2 } ds 
	+ \int_{t_{m}}^{t_{m+1}} S_N(t_{m+1}-s) P_N d B^H(s), 
\end{equation*}
in which by $v_{t_{m}}$, $m = 0, 1, ..., M$, we denote the full-discrete numerical solutions. 
In addition, we set 
\begin{equation}\label{rho and theta}
	\frac{1}{4} \leq \varrho \leq \frac{3}{8}, \quad  \varrho - 2\theta \geq \frac{1}{4}, \quad \theta < \frac{1}{8}. 
\end{equation}
These settings are attainable, for example, taking $\varrho = \frac{3}{8}$ and $\theta = \frac{1}{16}$. 
Define $\kappa(t) := \tau \left\lfloor \frac{t}{\tau} \right\rfloor$ for $t \in [0,T]$, where $\lfloor \cdot \rfloor$ represents the floor function. Namely, 
\begin{equation*}
	\kappa(t) = t_m = m \tau, \ \ \text{for} \ t \in [t_m, t_{m+1}), \ m = 0, 1, ... , M-1.
\end{equation*} 
With $\kappa(t)$ defined above, we introduce the following continuous-time approximations:  
\begin{equation}\label{Eq:continuous version}
	v_t = S_N(t) v_{t_{0}} + \int_{0}^{t} \frac{S_N(t-s) P_N f(v_{\kappa(s)})}{1+\tau^{\theta}\|v_{\kappa(s)}\|_{\dot{H}^{\varrho}}^2 + \tau^{\theta}\|\mathcal{O}^N_{\kappa(s)}\|_{\dot{H}^{\varrho}} ^2 } ds + \mathcal{O}^N_{t}. 
\end{equation}
We note that the values of $v_t$ at gridpoints $t_m$ coincide with full-discrete numerical solutions $v_{t_m}$. 
In fact, substituting $t=t_{m+1}$ into \eqref{Eq:continuous version}, we obtain 
\begin{align*}
	v_{t_{m+1}} &= S_N(t_{m+1}) v_{t_{0}} + \int_{0}^{t_{m+1}} \frac{S_N(t_{m+1}-s) P_N f(v_{\kappa(s)})}{1+\tau^{\theta}\|v_{\kappa(s)}\|_{\dot{H}^{\varrho}}^2 + \tau^{\theta}\|\mathcal{O}^N_{\kappa(s)}\|_{\dot{H}^{\varrho}} ^2 } ds + \int_{0}^{t_{m+1}} S_N(t_{m+1}-s) P_N dB^H(s) \\
	&=S_N(\tau)\left( S_N(t_{m}) v_{t_{0}} +  \int_{0}^{t_{m}} \frac{S_N(t_{m}-s) P_N f(v_{\kappa(s)})}{1+\tau^{\theta}\|v_{\kappa(s)}\|_{\dot{H}^{\varrho}}^2 + \tau^{\theta}\|\mathcal{O}^N_{\kappa(s)}\|_{\dot{H}^{\varrho}} ^2 } ds +  \int_{0}^{t_{m}} S_N(t_{m}-s) P_N dB^H(s)\right) \\[2mm]
	&\quad + \int_{t_m}^{t_{m+1}} \frac{S_N(t_{m+1}-s) P_N f(v_{t_m})}{1+\tau^{\theta}\|v_{t_m}\|_{\dot{H}^{\varrho}}^2 + \tau^{\theta}\|\mathcal{O}^N_{t_m}\|_{\dot{H}^{\varrho}} ^2 } ds + \int_{t_m}^{t_{m+1}} S_N(t_{m+1}-s) P_N dB^H(s) \\
	&= S_N(\tau) v_{t_{m}} + \int_{t_{m}}^{t_{m+1}} \frac{S_N(t_{m+1}-s) P_N f(v_{t_{m}})}{1+\tau^{\theta}\|v_{t_{m}}\|_{\dot{H}^{\varrho}} ^2 + \tau^{\theta}\|\mathcal{O}^N_{t_m}\|_{\dot{H}^{\varrho}}^2 } ds 
	+ \int_{t_{m}}^{t_{m+1}} S_N(t_{m+1}-s) P_N d B^H(s), 
\end{align*}
which exactly matches the numerical scheme.

\subsection{Pathwise estimate of fully discrete approximations}
In the remainder of this section, we always assume that $\varrho$ and $\theta$ satisfy \eqref{rho and theta}. 
Denote $\tilde{v}_t := v_t - \mathbb{O}^{N,\eta}_{t}$, then 
\begin{equation}\label{Eq:continuous differential version}
	\frac{d}{dt} \tilde{v}_t = -A_N \tilde{v}_t + \frac{P_N f(\tilde{v}_{\kappa(t)}+\mathbb{O}^{N,\eta}_{\kappa(t)})}{1+\tau^{\theta}\|v_{\kappa(t)}\|_{\dot{H}^{\varrho}}^2 + \tau^{\theta}\|\mathcal{O}^N_{\kappa(t)}\|_{\dot{H}^{\varrho}} ^2 } + \eta \mathbb{O}^{N,\eta}_t, \qquad \tilde{v}_0=0.  
\end{equation}
For convenience, we introduce a stochastic process $G(t): [0,T] \times \Omega \rightarrow (0, 1)$, given by
$$G(t) := \frac{1}{1+\tau^{\theta}\|v_{\kappa(t)}\|_{\dot{H}^{\varrho}} ^2 + \tau^{\theta}\|\mathcal{O}^N_{\kappa(t)}\|_{\dot{H}^{\varrho}} ^2 }. $$
It can be seen that $G(t)$ is a step function, i.e., $G(t) = G(\kappa(t))$ for any $t \in [0,T]$. 
By using $\frac{1}{2} \frac{d}{dt} \|\tilde{v}_t\|^2 = \langle \tilde{v}_t , \frac{d}{dt} \tilde{v}_t \rangle$, the integration by parts, Young's inequality, and H\"older's inequality, we obtain from \eqref{Eq:continuous differential version} that 
\begin{equation}\label{Eq:derivetive of vbar}
	\begin{aligned} 
		&\frac{1}{2} \frac{d}{dt} \|\tilde{v}_t\|^2 
		= - \| \tilde{v}_t \|_{\dot{H}^1}^2 - \frac{G(t)}{2} \left\langle \frac{\partial}{\partial x} \tilde{v}_t , (v_{\kappa(t)})^2 - (v_t)^2 \right\rangle 
		- \frac{G(t)}{2} \left\langle \frac{\partial}{\partial x} \tilde{v}_t , (v_t)^2 \right\rangle 
		+ \eta \langle \tilde{v}_t , \mathbb{O}^{N,\eta}_t \rangle \\
		&\leq -\frac{1}{4} \| \tilde{v}_t \|_{\dot{H}^1}^2 + \frac{(G(t))^2}{4} \left\| v_t + v_{\kappa(t)} \right\|^2_{L^4} \left\| v_t - v_{\kappa(t)} \right\|^2_{L^4}
		+ 2\|\mathbb{O}^{N,\eta}_t\|_{L^\infty}^2\|\tilde{v}_t\|^2 + \frac{1}{2} \|\mathbb{O}^{N,\eta}_t\|_{L^4}^4 + \eta^2 \|\mathbb{O}^{N,\eta}_t\|^2  . 
	\end{aligned}
\end{equation}
On the one hand, by denoting $\bar{v}_t := v_t - \mathcal{O}^N_t$, we know that 
\begin{equation*}
	\bar{v}_t = S_N(t-\kappa(t)) \bar{v}_{\kappa(t)} + \int_{\kappa(t)}^{t} G(s) S_N(t-s) P_N f(v_{\kappa(s)}) ds, 
\end{equation*}
which, together with Sobolev embedding inequalities $\dot{H}^{\varrho} \hookrightarrow L^4(\mathcal{D})$ and $\dot{H}^{1-2\varrho} \hookrightarrow L^4(\mathcal{D})$, \eqref{Est:S B 2} with $\zeta=1-2\varrho$, and $\bar{v}_t = v_t - \mathcal{O}^N_{t}$, implies that 
\begin{equation}\label{Eq:vbar L4}
	\begin{aligned}
		\|\bar{v}_t\|_{L^4} 
		&\leq C \| S_N(t-\kappa(t)) \bar{v}_{\kappa(t)}\|_{\dot{H}^{\varrho}} + \frac{CG(s)}{2}  \int_{\kappa(t)}^{t} \left\| S_N(t-s) \frac{\partial}{\partial x} (v_{\kappa(t)})^2 \right\|_{\dot{H}^{1-2\varrho}} ds \\
		&\leq C\|\bar{v}_{\kappa(t)}\|_{\dot{H}^{\varrho}} +  \frac{C(\varrho) G(\kappa(t))}{2} \int_{\kappa(t)}^{t} (t-s)^{\varrho-1} \left\| \frac{\partial}{\partial x} (v_{\kappa(t)})^2 \right\|_{W^{-1, 2}} ds \\
		&\leq C \| v_{\kappa(t)}\|_{\dot{H}^{\varrho}} + C \|\mathcal{O}^N_{\kappa(t)}\|_{\dot{H}^{\varrho}} + C(\varrho)  G(\kappa(t)) \tau^{\varrho} \| v_{\kappa(t)} \|_{L^4}^2. 
	\end{aligned}
\end{equation}
On the other hand, we know that  
\begin{equation*}
	\bar{v}_t - \bar{v}_{\kappa(t)} = \left(S_N(t-\kappa(t)) - I\right) \bar{v}_{\kappa(t)} + \int_{\kappa(t)}^{t} G(s) S_N(t-s) P_N f(v_{\kappa(s)}) ds. 
\end{equation*}
Similarly to the estimation of \eqref{Eq:vbar L4}, by Sobolev embedding inequalities $\dot{H}^{\varrho-2\theta} \hookrightarrow L^4(\mathcal{D})$ and $\dot{H}^{1-2\varrho} \hookrightarrow L^4(\mathcal{D})$, then taking $\zeta=1-2\varrho$ in \eqref{Est:S B 2}, we obtain
\begin{equation}\label{Eq:vbar-vbark L4}
	\begin{aligned}
		&\|\bar{v}_t - \bar{v}_{\kappa(t)}\|_{L^4} \\
		\leq \ &  C \left\| \left(S_N(t-\kappa(t))-I\right) A^{-\theta} A^{\frac{\varrho}{2}} \bar{v}_{\kappa(t)} \right\| 
		+ \frac{CG(\kappa(t))}{2} \int_{\kappa(t)}^{t} \left\| S_N(t-s) \frac{\partial}{\partial x} (v_{\kappa(t)})^2 \right\|_{\dot{H}^{1-2\varrho}} ds \\
		\leq \ &  C \tau^{\theta} \|\bar{v}_{\kappa(t)} \|_{\dot{H}^{\varrho}} 
		+ C(\varrho) G(\kappa(t)) \tau^{\varrho} \| v_{\kappa(t)} \|_{L^4}^2 \\
		\leq \ &  C \tau^{\theta} \| v_{\kappa(t)} \|_{\dot{H}^{\varrho}}
		+ C \tau^{\theta} \| \mathcal{O}^N_{\kappa(t)} \|_{\dot{H}^{\varrho}}
		+ C(\varrho) G(\kappa(t)) \tau^{\varrho} \| v_{\kappa(t)} \|_{L^4}^2. 
	\end{aligned}
\end{equation}
It can be seen from \eqref{Eq:vbar L4} and \eqref{Eq:vbar-vbark L4} that  
\begin{equation}\label{Est: bound of v}
	\begin{aligned}
		&G(t) \|v_t+v_{\kappa(t)}\|_{L^4} \|v_t-v_{\kappa(t)}\|_{L^4} \\
		= \ & G(t) \|\bar{v}_t+\mathcal{O}_t^N+v_{\kappa(t)}\|_{L^4} \|\bar{v}_t-\bar{v}_{\kappa(t)}+\mathcal{O}_t^N-\mathcal{O}_{\kappa(t)}^N\|_{L^4} \\
		\leq \ & C(\varrho) G(t) \left( \| v_{\kappa(t)}\|_{\dot{H}^{\varrho}} + \| \mathcal{O}^N_{\kappa(t)}\|_{\dot{H}^{\varrho}} + \tau^{\varrho}   G(\kappa(t)) \| v_{\kappa(t)} \|_{L^4}^2 + \|\mathcal{O}_t^N\|_{L^4} \right)   \\ 
		& \times \left( \tau^{\theta} \| v_{\kappa(t)} \|_{\dot{H}^{\varrho}}
		+ \tau^{\theta} \| \mathcal{O}^N_{\kappa(t)} \|_{\dot{H}^{\varrho}}
		+ \tau^{\varrho} G(\kappa(t))\| v_{\kappa(t)} \|_{L^4}^2 + \|\mathcal{O}_t^N-\mathcal{O}_{\kappa(t)}^N\|_{L^4}\right). 
	\end{aligned}
\end{equation}
Recalling the definition of $G(t)$ and the Sobolev embedding inequality $\dot{H}^{\varrho} \hookrightarrow L^4(\mathcal{D})$, we have 
\begin{equation*}
	\tau^{\varrho} G(\kappa(t)) \| v_{\kappa(t)} \|_{L^4}^2 
	= \frac{\tau^{\varrho-\theta} \tau^{\theta} \| v_{\kappa(t)} \|_{L^4}^2}{1+\tau^{\theta}\|v_{\kappa(t)}\|_{\dot{H}^{\varrho}} ^2 + \tau^{\theta}\|\mathcal{O}^N_{\kappa(t)}\|_{\dot{H}^{\varrho}}^2 } \leq C \tau^{\varrho-\theta}. 
\end{equation*}
Substituting the above estimate into \eqref{Est: bound of v}, then by $ab \leq \frac{1}{2} a^2 + \frac{1}{2} b^2$ and noting that $\varrho - \theta > \varrho - 2\theta \geq \frac{1}{4}$ and $\tau<1$, we obtain  
\begin{equation}\label{Eq:G v vbar}
	\begin{aligned}
		& G(t) \|v_t+v_{\kappa(t)}\|_{L^4} \|v_t-v_{\kappa(t)}\|_{L^4} \\
		\leq \ &  C(\varrho) G(t) \left( \| v_{\kappa(t)}\|_{\dot{H}^{\varrho}} + \| \mathcal{O}^N_{\kappa(t)}\|_{\dot{H}^{\varrho}} + C \tau^{\varrho-\theta} + \|\mathcal{O}_t^N\|_{L^4} \right) \\
		&\times \left( \tau^{\theta} \| v_{\kappa(t)} \|_{\dot{H}^{\varrho}}
		+ \tau^{\theta} \| \mathcal{O}^N_{\kappa(t)} \|_{\dot{H}^{\varrho}}
		+ C \tau^{\varrho-\theta} 
		+ \|\mathcal{O}_t^N-\mathcal{O}_{\kappa(t)}^N\|_{L^4} \right) \\ 
		\leq \ &  \frac{C(\varrho) \tau^{\theta} G(t)}{2} \left( \| v_{\kappa(t)}\|_{\dot{H}^{\varrho}} + \| \mathcal{O}^N_{\kappa(t)}\|_{\dot{H}^{\varrho}} + \|\mathcal{O}_t^N\|_{L^4} + 1 \right)^2 \\
		&+ \frac{C(\varrho) \tau^{\theta} G(t)}{2} \left( \| v_{\kappa(t)} \|_{\dot{H}^{\varrho}} 
		+ \| \mathcal{O}^N_{\kappa(t)} \|_{\dot{H}^{\varrho}} 
		+ \tau^{-\theta}\|\mathcal{O}_t^N-\mathcal{O}_{\kappa(t)}^N\|_{L^4} + 1 \right)^2  \\ 
		\leq \ &  C(\varrho) \tau^{\theta} G(t)  \left( \| v_{\kappa(t)} \|_{\dot{H}^{\varrho}}^2 
		+ \| \mathcal{O}^N_{\kappa(t)} \|_{\dot{H}^{\varrho}}^2 
		+ 1 \right) 
		+ C(\varrho) \|\mathcal{O}_t^N\|^2_{L^4} + C(\varrho) \tau^{-\theta}\|\mathcal{O}_t^N-\mathcal{O}_{\kappa(t)}^N\|_{L^4}^2 \\
		\leq \ & C(\varrho) + C(\varrho) \|\mathcal{O}_t^N\|^2_{L^4} + C(\varrho) \tau^{-\theta}\|\mathcal{O}_t^N-\mathcal{O}_{\kappa(t)}^N\|_{L^4}^2. 
	\end{aligned}
\end{equation}
Consequently, we deduce from \eqref{Eq:derivetive of vbar} and \eqref{Eq:G v vbar} that 
\begin{align*}
	\frac{d}{dt} \|\tilde{v}_t\|^2 
	&\leq -\frac{1}{2} \|\tilde{v}_t\|_{\dot{H}^1}^2 
	+ C \left(1+\|\mathcal{O}_t^N\|^4_{L^4} +  \tau^{-2\theta}\|\mathcal{O}_t^N-\mathcal{O}_{\kappa(t)}^N\|_{L^4}^4\right) \\
	& \quad + 4\|\mathbb{O}^{N,\eta}_t\|_{L^\infty}^2\|\tilde{v}_t\|^2+ \|\mathbb{O}^{N,\eta}_t\|_{L^4}^4 + 2\eta^2 \|\mathbb{O}^{N,\eta}_t\|^2. 
\end{align*}
By Gronwall's inequality, 
\begin{align*}
	\|\tilde{v}_t\|^2 \leq C(T,\eta) e^{4\int_{0}^{T}\|\mathbb{O}^{N,\eta}_t\|_{L^\infty}^2 dt} \Big( 1+\|u_0\|^2 + \int_{0}^{T} \|\mathbb{O}^{N,\eta}_t\|_{L^4}^4 dt \\
	+ \int_{0}^{T} \|\mathcal{O}^{N}_t\|_{L^4}^4 dt
	+ \tau^{-2\theta} \int_{0}^{T} \|\mathcal{O}_t^N-\mathcal{O}_{\kappa(t)}^N\|_{L^4}^4 dt \Big). 
\end{align*}
Taking $p/2$ moment on the above inequality, then by H\"older's inequality, Lemmas \ref{Le:exponential int O}, \ref{Le:O eta}, and \ref{Le:O L4}, we get 
\begin{equation}\label{Est:estimate of bar v} 
	\mathbb{E} \big[\sup_{t \in [0,T]} \|\tilde{v}_t\|^2 \big] + \int_{0}^{T} \mathbb{E}[\|\tilde{v}_t\|_{\dot{H}^1}^2] dt
	\leq C(T,\eta,H,u_0),  
\end{equation}
This gives the estimate of numerical approximations $\tilde{v}_t$, $t \in [0, T]$. 

\subsection{Convergence of the full discretization}
In order to investigate the convergence of $v_t$ to $u^N(t)$, we denote $\mathcal{E}^N(t) := u^N(t) - v_t$, which solves
\begin{equation*} 
	\frac{d}{dt} \mathcal{E}^N(t) = -A_N \mathcal{E}^N(t) + P_N f(u^N(t)) - G(t) P_N f(v_{\kappa(t)}), \quad  \mathcal{E}^N(0)=0. 
\end{equation*}
By using the identity $\frac{1}{2} \frac{d}{dt} \|\mathcal{E}^N(t)\|^2 = \langle \mathcal{E}^N(t), \frac{d}{dt}\mathcal{E}^N(t) \rangle$, $0 \leq 1-G(t) < 1$ and $ab \leq \frac{1}{2} a^2 + \frac{1}{2} b^2$, we have 
\begin{align*} 
	\frac{1}{2} \frac{d}{dt} \|\mathcal{E}^N(t)\|^2 
	&= -\|\mathcal{E}^N(t)\|_{\dot{H}^1}^2 - \frac{1}{2} \left\langle \frac{\partial}{\partial x} \mathcal{E}^N(t) , (u^N(t))^2 - (v_t)^2 \right\rangle \\
	&\quad - \frac{1}{2} \left\langle \frac{\partial}{\partial x} \mathcal{E}^N(t) , (v_t)^2 - (v_{\kappa(t)})^2 \right\rangle
	- \frac{1}{2} \left\langle \frac{\partial}{\partial x} \mathcal{E}^N(t) , (v_{\kappa(t)})^2 - G(t) (v_{\kappa(t)})^2 \right\rangle \\
	&\leq -\|\mathcal{E}^N(t)\|_{\dot{H}^1}^2 + \frac{1}{4} \|\mathcal{E}^N(t)\|_{\dot{H}^1}^2 + \frac{1}{4} \|(u^N(t) + v_t) \mathcal{E}^N(t)\|^2
	+ \frac{1}{4} \|\mathcal{E}^N(t)\|_{\dot{H}^1}^2 \\
	&\quad + \frac{1}{4} \left\| (v_t + v_{\kappa(t)} ) (v_t - v_{\kappa(t)}) \right\|^2 
	+ \frac{1}{4} \left(1-G(t)\right) \|\mathcal{E}^N(t)\|_{\dot{H}^1}^2 + \frac{1}{4} \left(1-G(t)\right) \| (v_{\kappa(t)})^2 \|^2 \\
	&\leq \frac{1}{2}\left( \|u^N(t)\|_{L^\infty}^2 + \|v_t\|_{L^\infty}^2\right) \|\mathcal{E}^N(t)\|^2 
	+ \frac{1}{2} \left( \|v_t\|_{L^4}^2 + \|v_{\kappa(t)}\|_{L^4}^2 \right) \left\| v_t - v_{\kappa(t)}\right\|_{L^4}^2 \\
	&\quad + \frac{1}{4} \left( \tau^{\frac{\theta}{2}}\|v_{\kappa(t)}\|_{\dot{H}^{\varrho}} + \tau^{\frac{\theta}{2}}\|\mathcal{O}^N_{\kappa(t)}\|_{\dot{H}^{\varrho}} \right) \| v_{\kappa(t)} \|_{L^4}^4, 
\end{align*}
in which $\tau^{\frac{\theta}{2}}a \leq \tau^{\theta} a^2 + \frac{1}{4}$ and the following estimate is used,  
\begin{equation*}
	1-G(t) \leq  \frac{\tau^{\theta}\|v_{\kappa(t)}\|_{\dot{H}^{\varrho}} ^2 + \tau^{\theta}\|\mathcal{O}^N_{\kappa(t)}\|_{\dot{H}^{\varrho}}^2}{1+\tau^{\frac{\theta}{2}}\|v_{\kappa(t)}\|_{\dot{H}^{\varrho}} - \frac{1}{4} + \tau^{\frac{\theta}{2}}\|\mathcal{O}^N_{\kappa(t)}\|_{\dot{H}^{\varrho}} - \frac{1}{4} } 
	\leq \tau^{\frac{\theta}{2}}\|v_{\kappa(t)}\|_{\dot{H}^{\varrho}} + \tau^{\frac{\theta}{2}}\|\mathcal{O}^N_{\kappa(t)}\|_{\dot{H}^{\varrho}}. 
\end{equation*}
For convenience, we denote
\begin{align*}
	\mathcal{G}_t 
	&:= \int_{0}^{t} \left( \|v_s\|_{L^4}^2 + \|v_{\kappa(s)}\|_{L^4}^2 \right) \left\| v_s - v_{\kappa(s)}\right\|_{L^4}^2 ds
	+ \frac{1}{2} \int_{0}^{t} \left( \tau^{\frac{\theta}{2}}\|v_{\kappa(s)}\|_{\dot{H}^{\varrho}} + \tau^{\frac{\theta}{2}}\|\mathcal{O}^N_{\kappa(s)}\|_{\dot{H}^{\varrho}} \right) \| v_{\kappa(s)} \|_{L^4}^4 ds. 
\end{align*}
Then it follows from Gronwall's inequality and $\mathcal{E}^N(0) = 0$ that 
\begin{equation}\label{Est:error eN}
	\|\mathcal{E}^N(t)\|^2 \leq \mathcal{G}_t \exp \left\{ \int_{0}^{t} \|u^N(s)\|_{L^\infty}^2 ds + \int_{0}^{t} \|v_s\|_{L^\infty}^2 ds \right\}. 
\end{equation}
By H\"older's inequality and $ab \leq \frac{1}{2} a^2 + \frac{1}{2} b^2$, we have 
\begin{equation}\label{Eq:mathcal G 1}
	\begin{aligned}
		\mathcal{G}_t
		&\leq \sqrt{2} \left( \int_{0}^{t} \left( \|v_s\|_{L^4}^4 + \|v_{\kappa(s)}\|_{L^4}^4 \right) ds \right)^{\frac{1}{2}} 
		\left( \int_{0}^{t}  \left\| v_s - v_{\kappa(s)}\right\|_{L^4}^4 ds  \right)^{\frac{1}{2}} \\
		&\quad + \frac{\tau^{\frac{\theta}{2}}}{2} \int_{0}^{t} \left( \|v_{\kappa(s)}\|_{\dot{H}^{\varrho}}^2 + \|\mathcal{O}^N_{\kappa(s)}\|_{\dot{H}^{\varrho}}^2 \right) ds + 
		\frac{\tau^{\frac{\theta}{2}}}{4} \int_{0}^{t} \| v_{\kappa(s)} \|_{L^4}^8 ds. 
	\end{aligned}
\end{equation}
Before making further estimations of $\mathcal{E}^N(t)$, we first give three lemmas involving the estimates of $v_t - v_{\kappa(t)}$, $v_t$, $v_{\kappa(t)}$, and the stochastic convolutions. 
We denote 
\begin{equation}\label{Theta}
	\Theta_1(t) := \int_{\kappa(t)}^{t} S_N(t-s) P_N dB^H(s), \quad \Theta_2(t) := \int_{0}^{\kappa(t)} \left( S_N(t-s) - S_N(\kappa(t)-s) \right) P_N dB^H(s). 
\end{equation}

\begin{Lemma}\label{Le:v1}
	For $v_t$ given by \eqref{Eq:continuous version} and $\theta$ given by \eqref{rho and theta}, it holds that 
	\begin{align*}
		\|v_t - v_{\kappa(t)}\|_{L^4} \leq C \tau^{2\theta} \|v_{t_{0}}\|_{\dot{H}^{\frac{1}{4}+4\theta}} 
		+ C(\theta, T) \tau^{2\theta}
		+ C \tau^{\frac{3}{8} - \theta} + \left\| \Theta_1(t) \right\|_{L^4} 
		+ \left\| \Theta_2(t) \right\|_{L^4},  
	\end{align*}
	where the constants $C$ and $C(\theta, T)$ are independent of $N$ and $\tau$. 
\end{Lemma}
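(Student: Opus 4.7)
The plan is to expand $v_t - v_{\kappa(t)}$ via the mild formulation \eqref{Eq:continuous version} and bound each resulting piece in $L^4(\mathcal{D})$. Concretely, write
\begin{align*}
v_t - v_{\kappa(t)} &= \bigl(S_N(t) - S_N(\kappa(t))\bigr) v_{t_0} + \int_{\kappa(t)}^t G(s) S_N(t-s) P_N f(v_{\kappa(s)}) ds \\
&\quad + \int_0^{\kappa(t)} \bigl(S_N(t-s) - S_N(\kappa(t)-s)\bigr) G(s) P_N f(v_{\kappa(s)}) ds + \bigl(\mathcal{O}^N_t - \mathcal{O}^N_{\kappa(t)}\bigr),
\end{align*}
and observe that the last bracket is precisely $\Theta_1(t) + \Theta_2(t)$ of \eqref{Theta}, producing the two stochastic terms in the bound.

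For the initial-data piece, I would invoke the one-dimensional Sobolev embedding $\dot{H}^{1/4} \hookrightarrow L^4(\mathcal{D})$ together with the factorization $A^{1/8}\bigl(S_N(h) - I\bigr) = \bigl(S_N(h) - I\bigr) A^{-2\theta} \cdot A^{1/8 + 2\theta}$, where $h := t - \kappa(t) \leq \tau$. The second estimate of \eqref{Est:semigroup 1} (with $\rho = 2\theta \in [0,1]$) then yields $\|\bigl(S_N(t) - S_N(\kappa(t))\bigr) v_{t_0}\|_{L^4} \leq C\tau^{2\theta}\|v_{t_0}\|_{\dot{H}^{1/4 + 4\theta}}$, which is the first summand.

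For the two deterministic convolution integrals, I would rewrite $P_N f(v_{\kappa(s)}) = \tfrac{1}{2} P_N\partial_x(v_{\kappa(s)})^2$, use $\|\partial_x(v_{\kappa(s)})^2\|_{W^{-1,2}} \leq \|v_{\kappa(s)}\|_{L^4}^2$, and combine \eqref{Est:S B 2} with the one-step taming bound
\[
G(s)\|v_{\kappa(s)}\|_{L^4}^2 \leq C\, G(s)\|v_{\kappa(s)}\|_{\dot{H}^{\varrho}}^2 \leq C\tau^{-\theta},
\]
which follows from $\dot{H}^{\varrho} \hookrightarrow L^4$ and $x/(1+\tau^{\theta} x) \leq \tau^{-\theta}$. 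For the short-interval integral on $[\kappa(t), t]$, I would use $\dot{H}^{1-2\varrho} \hookrightarrow L^4$ and \eqref{Est:S B 2} with $\zeta = 1-2\varrho$, giving $C \int_{\kappa(t)}^t (t-s)^{\varrho-1} \tau^{-\theta}\, ds \leq C\tau^{\varrho - \theta} = C\tau^{3/8 - \theta}$ for $\varrho = 3/8$. For the difference integral on $[0, \kappa(t)]$, I would split $S_N(t-s) - S_N(\kappa(t)-s) = \bigl(S_N(h) - I\bigr) S_N(\kappa(t) - s)$ and reapply the $(S_N(h)-I) A^{-2\theta}$ factorization to extract $\tau^{2\theta}$, treating the remaining convolution by \eqref{Est:S B 2} combined with the taming bound; the constraint $\varrho - 2\theta \geq 1/4$ from \eqref{rho and theta} keeps the resulting singular time integral convergent and yields the $C(\theta, T)\tau^{2\theta}$ term.

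The main delicate point is the interaction between the smoothing gain $\tau^{2\theta}$ extracted via $(S_N(h) - I) A^{-2\theta}$, the taming cost $\tau^{-\theta}$ unavoidably produced by $G(s)\|v_{\kappa(s)}\|_{L^4}^2$, and the integrability of the kernels $(t-s)^{-\alpha}$ coming from \eqref{Est:S B 2}: balancing these three constraints is exactly what forces the parameter restrictions $\theta < 1/8$, $1/4 \leq \varrho \leq 3/8$ and $\varrho - 2\theta \geq 1/4$ codified in \eqref{rho and theta}.
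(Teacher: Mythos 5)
Your decomposition is exactly the paper's: the same four pieces, the identification of $\mathcal{O}^N_t - \mathcal{O}^N_{\kappa(t)}$ with $\Theta_1(t)+\Theta_2(t)$, the factorization $(S_N(h)-I)A^{-2\theta}\cdot A^{1/8+2\theta}$ for the initial-data term, and the taming bound $G(s)\|v_{\kappa(s)}\|_{L^4}^2 \leq C\tau^{-\theta}$. The initial-data term and the stochastic terms are handled correctly.

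There is, however, a bookkeeping gap in the difference integral over $[0,\kappa(t)]$. You extract $\tau^{2\theta}$ from $(S_N(h)-I)A^{-2\theta}$ and then must still pay the taming cost $\tau^{-\theta}$, because the bound is pathwise with deterministic constants and $\|v_{\kappa(s)}\|_{L^4}^2$ can only be controlled through the denominator of $G(s)$. The net power is therefore $\tau^{2\theta-\theta}=\tau^{\theta}$, which does not give the claimed $C(\theta,T)\tau^{2\theta}$. The paper avoids this by extracting $\tau^{3\theta}$ instead, writing $A^{-3\theta}(S_N(h)-I)\cdot A^{3\theta+1/8}S_N(\kappa(t)-s)$; after paying $\tau^{-\theta}$ the net is $\tau^{2\theta}$, and the resulting kernel $(\kappa(t)-s)^{-(3\theta+5/8)}$ is integrable precisely because $\theta<\tfrac18$ — that, not $\varrho-2\theta\geq\tfrac14$, is the constraint doing the work here. (The weaker $\tau^{\theta}$ would still suffice for the qualitative convergence downstream, but it does not prove the lemma as stated.) A secondary point: for the short-interval integral you take $\zeta=1-2\varrho$ and obtain $\tau^{\varrho-\theta}$, which equals the claimed $\tau^{3/8-\theta}$ only at $\varrho=3/8$; the paper takes $\zeta=\tfrac14$ (via $\dot H^{1/4}\hookrightarrow L^4$), yielding the kernel $(t-s)^{-5/8}$ and hence $\tau^{3/8-\theta}$ uniformly over the admissible range $\varrho\in[\tfrac14,\tfrac38]$.
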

\begin{proof}
	From \eqref{Eq:continuous version} and by the Sobolev embedding inequality $\dot{H}^{\frac{1}{4}} \hookrightarrow L^4(\mathcal{D})$, we have  
	\begin{align*}
		\|v_t - v_{\kappa(t)}\|_{L^4} 
		&\leq C \left\| S_N(\kappa(t)) \left( S_N(t-\kappa(t)) - I \right) v_{t_{0}} \right\|_{\dot{H}^{\frac{1}{4}}} \\
		&\quad + C \int_{0}^{\kappa(t)} G(s) \left\| S_N(\kappa(t)-s) \left( S_N(t-\kappa(t)) - I \right) f(v_{\kappa(s)}) \right\|_{\dot{H}^{\frac{1}{4}}} ds \\
		&\quad + C \int_{\kappa(t)}^{t} G(s) \left\| S_N(t-s) f(v_{\kappa(s)}) \right\|_{\dot{H}^{\frac{1}{4}}} ds + \| \Theta_1(t) \|_{L^4} + \| \Theta_2(t) \|_{L^4} \\
		& =: I_1 + I_2 + I_3 + \| \Theta_1(t) \|_{L^4} + \| \Theta_2(t) \|_{L^4}. 
	\end{align*}
	For $I_1$, through \eqref{Est:semigroup 1} with $\rho = 2 \theta$, we have 
	\begin{align*}
		I_1
		= C \left\| A^{-2 \theta} \left( S_N(t-\kappa(t)) - I \right) S_N(\kappa(t)) A^{\frac{1}{8}+2\theta} v_{t_{0}} \right\| 
		\leq C \tau^{2\theta} \|v_{t_{0}}\|_{\dot{H}^{\frac{1}{4}+4\theta}} . 
	\end{align*}
	For $I_2$, by \eqref{Est:S B 2} with $\zeta = 6 \theta + \frac{1}{4}$, $\dot{H}^{\varrho} \hookrightarrow L^4(\mathcal{D})$ and noting that  $\theta < \frac{1}{8}$, we have 
	\begin{align*}
		I_2&= \frac{C}{2} \int_{0}^{\kappa(t)} G(s) \left\| A^{-3\theta} \left( S_N(t-\kappa(t)) - I \right) A^{3\theta+\frac{1}{8}} S_N(\kappa(t)-s) \frac{\partial}{\partial x} (v_{\kappa(s)})^2 \right\| ds \\
		&\leq C(\theta) \tau^{3\theta} \int_{0}^{\kappa(t)} G(s) \left( \kappa(t)-s \right)^{-(3\theta + \frac{1}{8} + \frac{1}{2})} \left\| \frac{\partial}{\partial x} (v_{\kappa(s)})^2 \right\|_{W^{-1, 2}} ds \\
		&= C(\theta) \tau^{2\theta} \int_{0}^{\kappa(t)} \left( \kappa(t)-s \right)^{-(3\theta + \frac{5}{8})} \frac{\tau^{\theta} \| v_{\kappa(s)} \|_{L^4}^2}{1+\tau^{\theta}\|v_{\kappa(s)}\|_{\dot{H}^{\varrho}} ^2 + \tau^{\theta}\|\mathcal{O}^N_{\kappa(s)}\|_{\dot{H}^{\varrho}} ^2 } ds \\
		&\leq C(\theta) \tau^{2\theta} \int_{0}^{\kappa(t)} \left( \kappa(t)-s \right)^{-(3\theta + \frac{5}{8})}  ds 
		\leq C(\theta, T) \tau^{2\theta}. 
	\end{align*}
	For $I_3$, taking $\zeta=\frac{1}{4}$ in \eqref{Est:S B 2}, we have 
	\begin{align*}
		I_3 &\leq C \int_{\kappa(t)}^{t} (t-s)^{-\frac{5}{8}} \frac{ \|v_{\kappa(s)}\|_{L^4}^2}{1+\tau^{\theta}\|v_{\kappa(s)}\|_{\dot{H}^{\varrho}} ^2 + \tau^{\theta}\|\mathcal{O}^N_{\kappa(s)}\|_{\dot{H}^{\varrho}} ^2 } ds \\
		&= C \tau^{-\theta} \int_{\kappa(t)}^{t} (t-s)^{-\frac{5}{8}} \frac{\tau^{\theta} \|v_{\kappa(s)}\|_{L^4}^2}{1+\tau^{\theta}\|v_{\kappa(s)}\|_{\dot{H}^{\varrho}}^2 + \tau^{\theta}\|\mathcal{O}^N_{\kappa(s)}\|_{\dot{H}^{\varrho}} ^2 } ds 
		\leq C \tau^{\frac{3}{8} - \theta}, 
	\end{align*}
	which completes the proof. 
\end{proof}

\begin{Lemma}\label{Le:O L4}
	For $\Theta_1(t)$ and $\Theta_2(t)$ given by \eqref{Theta} and for any $t \in [0,T]$, it holds that 
	\begin{align*}
		\mathbb{E} \left[\left\| \Theta_1(t) \right\|_{L^4}^4\right] \leq C(H) \tau^{4H - 1 - 2\epsilon} \quad \text{and} \quad \mathbb{E} \left[\left\| \Theta_2(t) \right\|_{L^4}^4\right] \leq C(H) \tau^{4H - 1 - 2\epsilon},
	\end{align*}
	where $\epsilon$ is a small positive number. 
\end{Lemma}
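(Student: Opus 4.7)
The plan is to reduce the fourth-moment bound to a pointwise second-moment estimate via the Gaussian structure of the stochastic convolutions, and then to invoke Lemma \ref{Le:S(t)} mode by mode. Since each $\Theta_i(t)$ is a $U$-valued centered Gaussian random variable, the evaluation $\Theta_i(t)(x)$ is a centered Gaussian scalar for every $x\in\mathcal{D}$, so $\mathbb{E}|\Theta_i(t)(x)|^4 = 3(\mathbb{E}|\Theta_i(t)(x)|^2)^2$, and Fubini gives
\begin{equation*}
\mathbb{E}\|\Theta_i(t)\|_{L^4}^4 = 3\int_0^1 \bigl(\mathbb{E}|\Theta_i(t)(x)|^2\bigr)^2 dx.
\end{equation*}
It therefore suffices to prove the pointwise bound $\sup_{x\in\mathcal{D}}\mathbb{E}|\Theta_i(t)(x)|^2 \le C(H)\tau^{2H-\frac12-\epsilon'}$ for any small $\epsilon'>0$; squaring then yields the desired $\tau^{4H-1-2\epsilon'}$ after integration in $x$, and one sets $\epsilon=\epsilon'$ at the end.

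For $\Theta_1(t)$, I will use the independence of $\{w_k^H\}_{k\in\mathbb{N}}$, the It\^o isometry for fBm, and the eigen-relation $A_N\phi_k=\lambda_k\phi_k$ to write
\begin{equation*}
\mathbb{E}|\Theta_1(t)(x)|^2 = \sum_{k=1}^N |\phi_k(x)|^2 \int_{\kappa(t)}^t\!\!\int_{\kappa(t)}^t e^{-\lambda_k(t-u)}e^{-\lambda_k(t-v)}\phi(u-v)\,du\,dv.
\end{equation*}
Lemma \ref{Le:S(t)} with $\psi=\phi_k$, $s=\kappa(t)$, and $\vartheta=\tfrac14+\tfrac{\epsilon'}{2}$ (which is admissible since $H>\tfrac12$) bounds each summand by $C\lambda_k^{-(\frac12+\epsilon')}\tau^{2H-\frac12-\epsilon'}$. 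Using $|\phi_k(x)|^2\le 2$ and $\sum_k\lambda_k^{-(\frac12+\epsilon')}<\infty$ yields the claimed pointwise estimate, hence the bound on $\mathbb{E}\|\Theta_1(t)\|_{L^4}^4$.

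For $\Theta_2(t)$, the factorisation $S_N(t-s)-S_N(\kappa(t)-s) = S_N(\kappa(t)-s)(S_N(t-\kappa(t))-I)$ produces the modal factor $(e^{-\lambda_k(t-\kappa(t))}-1)\,e^{-\lambda_k(\kappa(t)-s)}$. Pulling out the squared prefactor $(1-e^{-\lambda_k(t-\kappa(t))})^2$ from the It\^o isometry, I will apply Lemma \ref{Le:S(t)} with $\psi=\phi_k$, $s=0$, $t$ replaced by $\kappa(t)$, and $\vartheta=H$ to bound the remaining double integral by $C\lambda_k^{-2H}$. The elementary inequality $(1-e^{-x})^2\le x^{2\rho}$ for $x\ge 0$ and $\rho\in[0,1]$, with the choice $\rho=H-\tfrac14-\tfrac{\epsilon'}{2}$ (which lies in $(0,1)$ for $H\in(\tfrac12,1)$ and small $\epsilon'$), then gives
\begin{equation*}
\mathbb{E}|\Theta_2(t)(x)|^2 \le C(H)\,\tau^{2H-\frac12-\epsilon'}\sum_{k=1}^N |\phi_k(x)|^2\,\lambda_k^{-\frac12-\epsilon'},
\end{equation*}
with the sum uniformly bounded in $N$ and $x$.

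The main technical point is the coupled choice of $\vartheta$ and the interpolation exponent $\rho$: summability in $k$ requires $\vartheta>\tfrac14$ and $\rho<\vartheta-\tfrac14$, while Lemma \ref{Le:S(t)} restricts $\vartheta\in[0,H]$. The condition $H>\tfrac12$ is precisely what leaves enough room to balance these constraints against the temporal exponent, at the cost of the arbitrarily small loss $\epsilon$ recorded in the statement. No tools beyond Lemma \ref{Le:S(t)} and Gaussian moment equivalence are needed.
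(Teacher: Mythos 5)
Your proposal is correct and follows essentially the same route as the paper's proof: the Gaussian fourth-moment identity reduces everything to $\sup_x\mathbb{E}|\Theta_i(t)(x)|^2$, which is then bounded mode by mode via It\^o's isometry and Lemma \ref{Le:S(t)} with $\vartheta=\tfrac14+\tfrac{\epsilon}{2}$ for $\Theta_1$, and via the factorisation $S_N(t-s)-S_N(\kappa(t)-s)=S_N(\kappa(t)-s)(S_N(t-\kappa(t))-I)$ together with $\vartheta=H$ and the interpolation bound $\lambda_k^{-a}(1-e^{-\lambda_k\tau'})\le(\tau')^{a}$ for $\Theta_2$. The coupled choice of exponents you highlight at the end is exactly the balance the paper strikes.
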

\begin{proof}
	Since the stochastic convolution is a $U$-valued Gaussian random variable, we have  
	\begin{equation*}
		\mathbb{E} \left[\left\| \Theta_1(t) \right\|_{L^4}^4 \right]
		= \mathbb{E} \left[|Y|^4\right] \int_{0}^{1} \left( \mathbb{E} \left[(\Theta_1(t))^2\right] \right)^2 dx, 
	\end{equation*}
	in which $Y$ stands for the standard Gaussian random variable. By the independence of $\{w_k^H\}_{k=1}^{N}$, It\^o's isometry and Lemma \ref{Le:S(t)} with $\vartheta = \frac{1}{4} + \frac{\epsilon}{2}$,  we obtain 
	\begin{align*}
		\mathbb{E} \left[(\Theta_1(t))^2\right] 
		&= \mathbb{E} \left[\left( \sum_{k=1}^{N} \phi_k(x) \int_{\kappa(t)}^{t} e^{-\lambda_k(t-r)} dw^H_k (r) \right)^2\right] 
		= \sum_{k=1}^{N} \phi_k^2(x) \mathbb{E} \left[\left( \int_{\kappa(t)}^{t} e^{-\lambda_k(t-r)} dw^H_k (r) \right)^2\right] \\
		&\leq \sum_{k=1}^{N}  \int_{\kappa(t)}^{t} \int_{\kappa(t)}^{t} \left\langle S_N(t-u) \phi_k , S_N(t-v) \phi_k \right\rangle \phi(u-v) du dv  \\
		&\leq C(H) \sum_{k=1}^{N} \lambda_k^{-\frac{1}{2}-\epsilon}  \left( t-\kappa(t) \right)^{2(H-\frac{1}{4}-\frac{\epsilon}{2})}
		\leq C(H) \tau^{2H - \frac{1}{2} - \epsilon},  
	\end{align*}
	where $\epsilon$ is an arbitrarily small positive number. 
	A similar derivation leads to 
	\begin{equation*}
		\mathbb{E} \left[\left\|\Theta_2(t)\right\|_{L^4}^4\right] = \mathbb{E} \left[|Y|^4\right] \int_{0}^{1} \left( \mathbb{E} \left[(\Theta_2(t))^2\right] \right)^2 dx. 
	\end{equation*}
	Through the independence of $\{w_k^H\}_{k=1}^{N}$, It\^o's isometry, Lemma \ref{Le:S(t)}, and the fact that $x^{-a}(1-e^{-x}) \leq 1$, $\forall \, a \in (0,1)$, we have 
	\begin{align*}
		&\mathbb{E} \left[(\Theta_2(t))^2\right] \\
		= \ &  \sum_{k=1}^{N} \int_{0}^{\kappa(t)} \int_{0}^{\kappa(t)} e^{-\lambda_k(\kappa(t)-u)} \left( e^{-\lambda_k(t-\kappa(t))} - 1 \right) e^{-\lambda_k(\kappa(t)-v)} \left( e^{-\lambda_k(t-\kappa(t))} - 1 \right) \phi(u-v) du dv \\
		= \ &  \sum_{k=1}^{N} \lambda_k^{-2H} \left( e^{-\lambda_k(t-\kappa(t))} - 1 \right)^2 \int_{0}^{\kappa(t)} \int_{0}^{\kappa(t)} \left\langle A^{H} S_N(\kappa(t)-u)  \phi_k, A^{H} S_N(\kappa(t)-v) \phi_k \right\rangle  \phi(u-v) du dv \\
		\leq \ &  C(H) \sum_{k=1}^{N} \left( e^{-\lambda_k(t-\kappa(t))} - 1 \right)^2 \lambda_k^{-2H} \|\phi_k\|^2  \\
		= \ &  C(H) \sum_{k=1}^{N} \lambda_k^{-\frac{1}{2}-\epsilon} \left[\lambda_k^{-(H-\frac{1}{4}-\frac{\epsilon}{2})} \left( 1-e^{-\lambda_k(t-\kappa(t))} \right)\right]^2 \leq C(H) \tau^{2H-\frac{1}{2}-\epsilon}, 
	\end{align*}
	which completes the proof.  
\end{proof}

\begin{Lemma}\label{Le:v2}
	For $v_t$ given by \eqref{Eq:continuous version} and $\varrho$ given by \eqref{rho and theta}, it holds that 
	\begin{equation}\label{Est:H varrho 2 of v}
		\begin{aligned}
			\|v_{\kappa(t)}\|_{\dot{H}^{\varrho}} 
			&\leq C \|v_{t_{0}}\|_{\dot{H}^{\varrho}} + C(\varrho) \int_{0}^{\kappa(t)} (\kappa(t)-s)^{-\frac{2\varrho+3}{4}} \|\bar{v}_{\kappa(s)}\|^2 ds \\
			&\quad + C(\varrho, T)
			\left( \int_{0}^{t}  \|\mathcal{O}^N_{\kappa(s)}\|^{\frac{4\varrho+14}{1-2\varrho}} ds \right)^{\frac{1-2\varrho}{2\varrho+7} }
			+ \| \mathcal{O}^N_{\kappa(t)} \|_{\dot{H}^{\varrho}},  
		\end{aligned}
	\end{equation}
	and 
	\begin{equation}\label{Est:L4 of v}
		\|v_t\|_{L^4} 
		\leq C \|v_{t_0}\|_{\dot{H}^{\frac{1}{4}}} +  C \int_{0}^{t} (t-s)^{-\frac{7}{8}} \|v_{\kappa(s)} \|^2  ds + \| \mathcal{O}^N_{t} \|_{L^4},  
	\end{equation}
	in which the constants $C$, $C(\varrho)$ and $C(\varrho, T)$ are independent of $N$ and $\tau$. 
\end{Lemma}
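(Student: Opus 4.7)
The strategy is to insert the mild representation \eqref{Eq:continuous version} of $v_t$, make the decomposition $v_{\kappa(t)}=\bar v_{\kappa(t)}+\mathcal{O}^N_{\kappa(t)}$ where useful, and then estimate the nonlinear convolution term by means of the smoothing property \eqref{Est:S B 1}. The taming factor $G(s)\in(0,1]$ plays no active role beyond being bounded by $1$, since we only seek upper bounds.

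For \eqref{Est:H varrho 2 of v}, I first split
\[
\|v_{\kappa(t)}\|_{\dot H^\varrho}\le\|\bar v_{\kappa(t)}\|_{\dot H^\varrho}+\|\mathcal{O}^N_{\kappa(t)}\|_{\dot H^\varrho},
\]
which already accounts for the last term in the claimed bound. The initial contribution is handled by contractivity, $\|S_N(\kappa(t))P_N u_0\|_{\dot H^\varrho}\le\|u_0\|_{\dot H^\varrho}$. For the drift convolution I apply \eqref{Est:S B 1} with $\zeta=\varrho$ to $f(v_{\kappa(s)})=\tfrac12\partial_x v_{\kappa(s)}^2$, together with $\|\partial_x v_{\kappa(s)}^2\|_{W^{-1,1}}\le\|v_{\kappa(s)}^2\|_{L^1}=\|v_{\kappa(s)}\|^2$, producing the kernel $(\kappa(t)-s)^{-(2\varrho+3)/4}$. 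Writing $\|v_{\kappa(s)}\|^2\le 2\|\bar v_{\kappa(s)}\|^2+2\|\mathcal{O}^N_{\kappa(s)}\|^2$, the $\bar v$-piece is already in the target form; for the $\mathcal{O}^N$-piece I apply H\"older's inequality with the carefully chosen conjugate pair $q=\frac{2\varrho+7}{1-2\varrho}$ and $p=\frac{2\varrho+7}{4\varrho+6}$, so that $2q=\frac{4\varrho+14}{1-2\varrho}$ matches the announced integrability order with outer exponent $\frac{1-2\varrho}{2\varrho+7}$, and the deterministic kernel exponent becomes $\frac{(2\varrho+3)p}{4}=\frac{2\varrho+7}{8}$, which is strictly below $1$ because $\varrho\le\tfrac38<\tfrac12$, so its time integral collapses to a constant $C(\varrho,T)$.

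For \eqref{Est:L4 of v}, the Sobolev embedding $\dot H^{1/4}\hookrightarrow L^4$ converts the $L^4$-norm into the $\dot H^{1/4}$-norm, after which I treat the three terms of the mild form separately. The initial term is bounded by $C\|v_{t_0}\|_{\dot H^{1/4}}$ using the contractivity of $S_N(t)$ on $\dot H^{1/4}$, the stochastic term contributes the standalone $\|\mathcal{O}^N_t\|_{L^4}$, and the drift convolution is estimated via \eqref{Est:S B 1} with $\zeta=\tfrac14$, giving the kernel $(t-s)^{-7/8}$ against $\|v_{\kappa(s)}\|^2$, which matches the stated inequality term by term.

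The principal subtlety is the alignment of the H\"older exponents in the first estimate so that the deterministic kernel remains integrable; this is precisely where the restriction $\varrho\le\tfrac38$ (equivalently $\varrho<\tfrac12$) from \eqref{rho and theta} is used in an essential way. Both kernel exponents $(2\varrho+3)/4$ and $7/8$ stay strictly below $1$ on the allowed range of $\varrho$, so all time integrals are locally integrable and the residual prefactors are absorbed into $C(\varrho,T)$.
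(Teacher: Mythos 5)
Your proposal is correct and follows essentially the same route as the paper: both use the mild form \eqref{Eq:continuous version}, bound the taming factor $G$ by $1$, estimate the drift convolution via \eqref{Est:S B 1} with $\zeta=\varrho$ (resp.\ $\zeta=\tfrac14$) together with $\|\partial_x v_{\kappa(s)}^2\|_{W^{-1,1}}=\|v_{\kappa(s)}\|^2$, split $v=\bar v+\mathcal{O}^N$, and apply H\"older with exactly the conjugate pair $\left(\tfrac{2\varrho+7}{4\varrho+6},\tfrac{2\varrho+7}{1-2\varrho}\right)$, so that the kernel exponent $\tfrac{2\varrho+7}{8}<1$ for $\varrho\le\tfrac38$. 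No gaps; the exponent bookkeeping matches the paper's computation.
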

\begin{proof}
	From \eqref{Eq:continuous version} and by taking $\zeta=\varrho$ in \eqref{Est:S B 1}, we know that
	\begin{align*}
		\|v_t\|_{\dot{H}^{\varrho}} 
		&\leq C \|v_{t_{0}}\|_{\dot{H}^{\varrho}} + C(\varrho) \int_{0}^{t} G(s) (t-s)^{-\frac{\varrho}{2}-\frac{3}{4}} \left\| \frac{\partial}{\partial x} (v_{\kappa(s)})^2 \right\|_{W^{-1, 1}} ds + \| \mathcal{O}^N_{t} \|_{\dot{H}^{\varrho}} \\
		&= C \|v_{t_{0}}\|_{\dot{H}^{\varrho}} + C(\varrho) \int_{0}^{t} G(s) (t-s)^{-\frac{\varrho}{2}-\frac{3}{4}} \|v_{\kappa(s)} \|^2 ds + \| \mathcal{O}^N_{t} \|_{\dot{H}^{\varrho}}. 
	\end{align*}
	Substituting $t = \kappa(t)$ into the above estimate, then recalling $G(s)<1$, $\bar{v}_t = v_t - \mathcal{O}^N_{t}$, $\frac{1}{4} \leq \varrho \leq \frac{3}{8}$ and using H\"older's inequality, we have  
	\begin{align*}
		\|v_{\kappa(t)}\|_{\dot{H}^{\varrho}} 
		&\leq C \|v_{t_{0}}\|_{\dot{H}^{\varrho}} + C(\varrho) \int_{0}^{\kappa(t)}  (\kappa(t)-s)^{-\frac{2\varrho+3}{4}} \|v_{\kappa(s)}\|^2   ds + \| \mathcal{O}^N_{\kappa(t)} \|_{\dot{H}^{\varrho}} \\
		&\leq C \|v_{t_{0}}\|_{\dot{H}^{\varrho}} + 2C(\varrho) \int_{0}^{\kappa(t)} (\kappa(t)-s)^{-\frac{2\varrho+3}{4}} \|\bar{v}_{\kappa(s)}\|^2 ds \\ 
		&\quad + 2C(\varrho) \left(\frac{8}{1-2\varrho}\right)^{\frac{4\varrho+6}{2\varrho+7}} T^{\frac{(1-2\varrho)(2\varrho+3)}{4(2\varrho+7)}}
		\left( \int_{0}^{t}  \|\mathcal{O}^N_{\kappa(s)}\|^{\frac{4\varrho+14}{1-2\varrho}} ds \right)^{\frac{1-2\varrho}{2\varrho+7} } 
		+ \| \mathcal{O}^N_{\kappa(t)} \|_{\dot{H}^{\varrho}}, 
	\end{align*}
	which verifies \eqref{Est:H varrho 2 of v}. 
	Finally, by $\dot{H}^{\frac{1}{4}} \hookrightarrow L^4(\mathcal{D})$ and taking $\zeta=\frac{1}{4}$ in \eqref{Est:S B 1}, it is easy to get \eqref{Est:L4 of v}. 	
\end{proof}
Recalling $\tilde{v}_t = v_t - \mathcal{O}^N_{t}$ and using H\"older's inequality, then through \eqref{Est:L4 of v}, we obtain 
\begin{equation}\label{Est:L4 4 of v}
	\begin{aligned}
		\|v_t\|_{L^4}^4 
		&\leq C \|v_{t_0}\|_{\dot{H}^{\frac{1}{4}}}^4 + C \left( \int_{0}^{t} (t-s)^{-\frac{7}{8}} \|\tilde{v}_{\kappa(s)}\|^2  ds \right)^4 \\
		&\quad + C \left( \int_{0}^{t} (t-s)^{-\frac{15}{16}} ds \right)^{\frac{56}{15}}
		\left( \int_{0}^{t}  \|\mathbb{O}^{N,\eta}_{\kappa(s)} \|^{30} ds \right)^{\frac{4}{15}} 
		+ C \| \mathcal{O}^N_{t} \|_{L^4}^4, 
	\end{aligned}
\end{equation}
and 
\begin{equation}\label{Est:L4 8 of v}
	\begin{aligned}
		\|v_{\kappa(t)}\|_{L^4}^8 
		&\leq C \|v_{t_0}\|_{\dot{H}^{\frac{1}{4}}}^8 + C \left( \int_{0}^{\kappa(t)} (\kappa(t)-s)^{-\frac{7}{8}} \|\tilde{v}_{\kappa(s)}\|^2  ds \right)^8 \\
		&\quad + C \left( \int_{0}^{\kappa(t)} (\kappa(t)-s)^{-\frac{15}{16}} ds \right)^{\frac{112}{15}}
		\left( \int_{0}^{\kappa(t)}  \|\mathbb{O}^{N,\eta}_{\kappa(s)} \|^{30} ds \right)^{\frac{8}{15}} 
		+ C \| \mathcal{O}^N_{\kappa(t)} \|_{L^4}^8. 
	\end{aligned}
\end{equation}

For $\mathcal{R}>0$ and $\varrho$ given by \eqref{rho and theta}, and as usual setting  $\inf \varnothing = + \infty$, we introduce a sequence of stopping times $\sigma_i$ as follows: 
\begin{align*}
	\sigma_1 &:= T \wedge \inf \{ t \in [0,T] : \int_{0}^{t} \|u^N(s)\|_{L^\infty}^2 ds \geq \mathcal{R} \} \\
	\sigma_2 &:= T \wedge \inf \{ t \in [0,T] : \int_{0}^{t} \|v_s\|_{L^\infty}^2 ds \geq \mathcal{R} \}, \\
	\sigma_3 &:= T \wedge \inf \{ t \in [0,T] : \|\tilde{v}_t\|^2 \geq \mathcal{R} \}, \\
	\sigma_4 &:= T \wedge \inf \{ t \in [0,T] : \int_{0}^{t} \| \mathcal{O}_{\kappa(s)}^N \|_{\dot{H}^{\varrho}}^2 ds \geq \mathcal{R} \}, \\
	\sigma_5 &:= T \wedge \inf \{ t \in [0,T] : \int_{0}^{t} \| \mathbb{O}_{\kappa(s)}^{N,\eta} \|^{\frac{4\varrho+14}{1-2\varrho}} ds \geq \mathcal{R} \}, \\
	\sigma_6 &:= T \wedge \inf \{ t \in [0,T] : \int_{0}^{t} \| \mathcal{O}_{\kappa(s)}^N \|_{L^4}^{8} ds \geq \mathcal{R} \}, \\
	\sigma_7 &:= T \wedge \inf \{ t \in [0,T] : \int_{0}^{t} \| \mathcal{O}_{s}^N \|_{L^4}^{4} ds \geq \mathcal{R} \}. 
\end{align*}
Denote $\sigma := \min\limits_{i = 1, 2, ..., 7} \left\{ \sigma_i \right\}$. Obviously $\sigma$ depends on $N$, $\mathcal{R}$ and $\tau$. 
Through \eqref{Est:L4 4 of v} we have 
\begin{align*}
	\sup\limits_{t \leq \sigma} \int_{0}^{t} \|v_s\|_{L^4}^4 ds
	&\leq C(T) \|v_{t_0}\|_{\dot{H}^{\frac{1}{4}}}^4 + C(\mathcal{R}) \sup\limits_{t \leq \sigma} \int_{0}^{t} \left(  \int_{0}^{s} (s-r)^{-\frac{7}{8}} dr \right)^4 ds \\
	&\quad + C(\mathcal{R}) \sup\limits_{t \leq \sigma} \int_{0}^{t}  \left( \int_{0}^{s} (s-r)^{-\frac{15}{16}} dr \right)^{\frac{56}{15}} ds
	+ C \sup\limits_{t \leq \sigma} \int_{0}^{t} \| \mathcal{O}^N_{s} \|_{L^4}^4 ds \\
	&\leq C(T) \|v_{t_0}\|_{\dot{H}^{\frac{1}{4}}}^4 + C(\mathcal{R}, T). 
\end{align*}
Similarly, through \eqref{Est:L4 8 of v} and \eqref{Est:H varrho 2 of v}, we obtain 
\begin{align*}
	\sup\limits_{t \leq \sigma} \int_{0}^{t} \|v_{\kappa(s)}\|_{L^4}^8 ds
	&\leq C(T) \|v_{t_0}\|_{\dot{H}^{\frac{1}{4}}}^8 + C(\mathcal{R}, T),  \\
	\sup\limits_{t \leq \sigma} \int_{0}^{t} \|v_{\kappa(s)}\|_{\dot{H}^{\varrho}}^2 ds
	&\leq C(T) \|v_{t_0}\|_{\dot{H}^{\varrho}}^2 + C(\mathcal{R}, T, \varrho).  
\end{align*}
Substituting the above estimates into \eqref{Eq:mathcal G 1}, we obtain 
\begin{equation}\label{Eq:mathcal G 2}
	\begin{aligned}
		\sup\limits_{t \leq \sigma} \mathcal{G}_t
		&\leq \left( C(T)   \|v_{t_0}\|_{\dot{H}^{\frac{1}{4}}}^2 + C(\mathcal{R}, T) \right)
		\left( \int_{0}^{\sigma}  \left\| v_s - v_{\kappa(s)}\right\|_{L^4}^4 ds  \right)^{\frac{1}{2}} \\
		&\quad + \tau^{\frac{\theta}{2}} \left( C(T) \|v_{t_0}\|_{\dot{H}^{\varrho}}^2 + C(\mathcal{R}, T, \varrho) + C(T) \|v_{t_0}\|_{\dot{H}^{\frac{1}{4}}}^8 + C(\mathcal{R}, T) \right).  
	\end{aligned}
\end{equation}
Through Lemma \ref{Le:v1}, we know that 
\begin{equation}\label{Eq:int vs-vks}
	\begin{aligned}
		\int_{0}^{\sigma} \|v_s - v_{\kappa(s)}\|_{L^4}^4 ds &\leq C(\theta, T)\left( \tau^{8\theta} \|v_{t_{0}}\|^4_{\dot{H}^{\frac{1}{4}+4\theta}} 
		+ \tau^{8\theta}
		+ \tau^{\frac{3}{2} - 4\theta} \right)
		+ C \int_{0}^{\sigma} \left(\left\| \Theta_1(s) \right\|_{L^4}^4 + \left\| \Theta_2(s) \right\|_{L^4}^4\right) ds.  
	\end{aligned}
\end{equation}
Accordingly, it follows from \eqref{Est:error eN}, \eqref{Eq:mathcal G 2}, and \eqref{Eq:int vs-vks} that 
\begin{equation}\label{Est:estimate of eN}
	\begin{aligned}
		\sup\limits_{t \leq \sigma} \|\mathcal{E}^N(t)\|^2 
		\leq \ &  \tau^{\frac{\theta}{2}} C(\mathcal{R}, T, \varrho) \left( \|v_{t_0}\|_{\dot{H}^{\varrho}}^2
		+ \|v_{t_0}\|_{\dot{H}^{\frac{1}{4}}}^8 +  1 \right) 
		+ C(\mathcal{R}, T, \theta) \left( \|v_{t_0}\|_{\dot{H}^{\frac{1}{4}}}^2 + 1 \right) \times \\
		& \left( \tau^{\min\{ 8\theta , \frac{3}{2} - 4\theta \}} \left(\|v_{t_0}\|_{\dot{H}^{\frac{1}{4}+4\theta}}^4+1\right) 
		+ \int_{0}^{\sigma} \left(\left\| \Theta_1(s) \right\|_{L^4}^4 + \left\| \Theta_2(s) \right\|_{L^4}^4\right) ds \right)^{\frac{1}{2}}. 
	\end{aligned}
\end{equation}

Next we show that $\lim\limits_{\mathcal{R} \rightarrow \infty} \mathbb{P} (\sigma < T) = 0$ holds for any $N$ and $\tau$. It suffices to prove $\lim\limits_{\mathcal{R} \rightarrow \infty} \mathbb{P} (\sigma_i < T) = 0$, $i = 1, 2, ..., 7$,  for any $N$ and $\tau$. 
The stopping time $\sigma_1$ does not involve the step size $\tau$, so it can be proved, similarly to the case of $\tau_1^N$, that $\mathbb{P}(\sigma_1<T) \rightarrow 0$ as $\mathcal{R} \rightarrow \infty$. 
The proofs of $\mathbb{P}(\sigma_i < T) \rightarrow 0$ for $i = 4, 5, 6, 7$ follow directly from the moment boundedness of stochastic convolutions. 
For $\sigma_2$, through \eqref{Est:estimate of bar v} and $\dot{H}^1 \hookrightarrow L^{\infty}(\mathcal{D})$, we know that 
\begin{align*} 
	\int_{0}^{T} \mathbb{E}[\|\tilde{v}_s\|_{L^\infty}^2] ds 
	\leq \int_{0}^{T} \mathbb{E} [\|\tilde{v}_s\|_{\dot{H}^1}^2] ds 
	\leq C(T,\eta,H,u_0). 
\end{align*}
Recalling $\tilde{v}_t = v_t - \mathbb{O}^{N,\eta}_{t}$, then by Markov's inequality and Lemma \ref{Le:O eta}, we obtain 
\begin{align*}
	\mathbb{P}(\sigma_2 < T) 
	&= \mathbb{P}\left( \int_{0}^{T} \|v_s\|_{L^\infty}^2 ds \geq \mathcal{R} \right) 
	\leq \frac{\int_{0}^{T} \mathbb{E} \left[\|v_s\|_{L^\infty}^2\right] ds}{\mathcal{R}}  \\
	&\leq \frac{C(T,\eta,H,u_0)}{\mathcal{R}} + \frac{C \int_{0}^{T} \mathbb{E} \left[\|\mathbb{O}_s^{N,\eta}\|_{L^\infty}^2\right] ds}{\mathcal{R}} \xrightarrow{\mathcal{R} \rightarrow \infty} 0, 
\end{align*}
which implies that for any $N$ and $\tau$, $\mathbb{P}(\sigma_2 < T) \rightarrow 0$ as $\mathcal{R} \rightarrow \infty$. 
For $\sigma_3$, by \eqref{Est:estimate of bar v}, we have 
\begin{align*}
	\mathbb{P}(\sigma_3 < T) &= \mathbb{E}\left[ \chi_{\{\sigma_3 < T\}} \right]
	\leq \mathbb{E}\left[ \chi_{\{\sigma_3 < T\}} \frac{\|\tilde{v}\left(\sigma_3 \wedge T\right)\|^2}{\mathcal{R}} \right] \\
	&\leq \frac{1}{\mathcal{R}} \mathbb{E} \left[ \sup\limits_{t \in [0,T]} \|\tilde{v}_t\|^2 \right]
	\leq \frac{ C(T,\eta,H,u_0) }{\mathcal{R}} 
	\xrightarrow[]{\mathcal{R} \rightarrow \infty} 0. 
\end{align*}

Now we are ready to prove that $\sup\limits_{t \in [0,T]} \|\mathcal{E}^N(t)\|^2 \rightarrow 0$, as $\tau \rightarrow 0$, holds in probability. 
In fact, for any $a>0$, we have 
\begin{equation*}
	\mathbb{P} \left( \sup\limits_{t \in [0,T]}  \|\mathcal{E}^N(t)\|^2 > a \right)  
	\leq \mathbb{P} \left( \sup\limits_{t \in [0,T]}  \|\mathcal{E}^N(t)\|^2 > a \ \text{and} \ \sigma = T \right) 
	+ \mathbb{P} \left( \sigma < T \right). 
\end{equation*}
For any $N \in \mathbb{N}$, $\tau>0$, and $\epsilon>0$, there exists a positive number $\mathcal{R}$ which depends on $\epsilon$ but is independent of $N$ and $\tau$ such that 
\begin{equation*}
	\mathbb{P} (\sigma < T) < \frac{\epsilon}{2}. 
\end{equation*}
We claim that for any $N \in \mathbb{N}$, $\epsilon>0$, and a fixed $\mathcal{R}>0$, there exists $\tau$ depending on $\epsilon$ and $\mathcal{R}$ such that 
\begin{equation}\label{claim1}
	\mathbb{P} \left( \sup\limits_{t \in [0,T]}  \|\mathcal{E}^N(t)\|^2 > a \ \text{and} \ \sigma = T \right) < \frac{\epsilon}{2}. 
\end{equation}
In fact, replacing $\sigma$ by $T$ in \eqref{Est:estimate of eN}, then applying Markov's inequality and Lemma \ref{Le:O L4}, we obtain 
\begin{align*}
	&\mathbb{P} \left( \sup\limits_{t \in [0,T]}  \|\mathcal{E}^N(t)\|^2 > a \ \text{and} \ \sigma = T \right) 
	\leq \frac{1}{a} \mathbb{E} \left[ \sup\limits_{t \in [0,T]}  \|\mathcal{E}^N(t)\|^2 \right] \\
	\leq \ & \frac{\tau^{\frac{\theta}{2}} C(\mathcal{R}, T, \varrho)}{a} \left( \mathbb{E} \left[\|v_{t_0}\|_{\dot{H}^{\varrho}}^2\right]
	+ \mathbb{E} \left[\|v_{t_0}\|_{\dot{H}^{\frac{1}{4}}}^8\right] +  1 \right) 
	+ \frac{C(\mathcal{R}, T, \theta)}{a} \left( \mathbb{E}\left[ \|v_{t_0}\|_{\dot{H}^{\frac{1}{4}}}^4\right] + 1 \right)^{\frac{1}{2}} \times \\
	& \left( \tau^{\min\{8\theta, \frac{3}{2}-4\theta\}} \left(\mathbb{E}\left[\|v_{t_0}\|_{\dot{H}^{\frac{1}{4}+4\theta}}^4\right]+1\right) + \int_{0}^{T} \left(\mathbb{E} \left[\left\| \Theta_1 \right\|_{L^4}^4\right] + \mathbb{E} \left[\left\| \Theta_2 \right\|_{L^4}^4\right] \right) ds \right)^{\frac{1}{2}}\\
	\leq \ &  \frac{\tau^{\frac{\theta}{2}} C(\mathcal{R}, T, \varrho)}{a} + \frac{C(\mathcal{R}, T, \theta, H)}{a} \left( \tau^{\min\{4\theta, \frac{3}{4}-2\theta\}} + \tau^{2H-\frac{1}{2}-\epsilon} \right). 
\end{align*}
This verifies the claim \eqref{claim1}, and thus for $p \geq 1$ and $N \in \mathbb{N}$, 
\begin{equation*}
	\sup\limits_{t \in [0,T]}  \| u^N(t) - v_t \|^p  \rightarrow 0, \quad \text{as } \tau \rightarrow 0, 
\end{equation*}
holds in probability. By virtue of the estimate of $\bar{v}_t$, i.e. \eqref{Est:estimate of bar v}, we know that 
\begin{equation*}
	\mathbb{E} \left[\|v_t\|^p\right] \leq 2^{p-1} \mathbb{E} \left[\|\bar{v}_t\|^p\right] + 2^{p-1} \mathbb{E} \left[\|\mathcal{O}_t^N\|^p\right] 
	\leq C(p, T) \left(\mathbb{E} \left[\|u_{0}\|^p\right] + \mathbb{E} \left[\|\mathcal{O}_t^N\|^p\right] + 1 \right) < \infty.  
\end{equation*}
Through Lemma \ref{Le:probability to expectation}, for $p \geq 1$, we have 
\begin{equation*}
	\sup\limits_{t \in [0,T]} \sup\limits_{N \in \mathbb{N}} \mathbb{E} \left[\left\| v_t - u^N(t) \right\|^p\right] \rightarrow 0, \quad \text{as} \ \tau \rightarrow 0. 
\end{equation*}
Finally, combined with this result and Theorem \ref{thm:convergence of  semi-discrete}, then using the triangle inequality, we obtain the main convergence result of this paper, shown in the following theorem. 
\begin{Theorem}
	Under Assumption \ref{Asp:initial value}, for $p>0$, it holds that 
	\begin{equation*}
		\sup\limits_{t \in [0,T]} \mathbb{E} \left[\| u(t) - v_t \|^p\right] \rightarrow 0, \quad \text{as } N \rightarrow \infty \ \text{and} \ \tau \rightarrow 0,  
	\end{equation*}
	where $u(t)$ and $v_t$ are given by \eqref{Eq:mild solution u1} and \eqref{Eq:continuous version}.  
\end{Theorem}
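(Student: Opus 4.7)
The plan is to deduce this from the two convergence results that have already been assembled just above. By the triangle inequality and the elementary inequality $(a+b)^p\le 2^{p-1}(a^p+b^p)$, for any $N\in\mathbb{N}$ and $\tau>0$ one has
\begin{equation*}
\sup_{t\in[0,T]}\mathbb{E}\bigl[\|u(t)-v_t\|^p\bigr]
\le 2^{p-1}\sup_{t\in[0,T]}\mathbb{E}\bigl[\|u(t)-u^N(t)\|^p\bigr]
+ 2^{p-1}\sup_{t\in[0,T]}\mathbb{E}\bigl[\|u^N(t)-v_t\|^p\bigr].
\end{equation*}
The first term on the right is exactly the spatial semi-discretization error, which by Theorem \ref{thm:convergence of  semi-discrete} tends to zero as $N\to\infty$ (with no dependence on $\tau$). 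The second term is the fully discrete error controlled by the result established in the paragraph just preceding the theorem, which asserts $\sup_{t\in[0,T]}\sup_{N\in\mathbb{N}}\mathbb{E}[\|v_t-u^N(t)\|^p]\to 0$ as $\tau\to 0$.

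The essential point, which makes the joint limit $N\to\infty$, $\tau\to 0$ clean, is precisely the uniformity in $N$ of the second bound. Given $\varepsilon>0$, I would first use this uniformity to choose $\tau_0>0$ (independent of $N$) such that the second term is less than $\varepsilon/2$ for every $\tau\in(0,\tau_0]$ and every $N$. Then, for each such $\tau$, invoke Theorem \ref{thm:convergence of  semi-discrete} to choose $N_0$ with the first term below $\varepsilon/2$ for $N\ge N_0$. Combining the two bounds gives $\sup_t\mathbb{E}[\|u(t)-v_t\|^p]<\varepsilon$ whenever $N\ge N_0$ and $\tau\le\tau_0$, which is the claim.

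There is essentially no remaining obstacle: the genuinely hard work has already been carried out in the previous sections, namely the exponential integrability of the stochastic convolution (Lemma \ref{Le:exponential int O}), the pathwise bound \eqref{Est:estimate of bar v}, the stopping-time arguments controlling $\mathbb{P}(\sigma<T)$ and $\mathbb{P}(\tau<T)$, and the passage from convergence in probability plus moment boundedness to strong convergence via Lemma \ref{Le:probability to expectation}. Once those are in place, only the triangle inequality and the two-step $\varepsilon$-argument above are needed.
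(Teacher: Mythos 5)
Your proposal is correct and follows essentially the same route as the paper: the authors likewise obtain the theorem by combining Theorem \ref{thm:convergence of  semi-discrete} with the uniform-in-$N$ temporal convergence $\sup_{t\in[0,T]}\sup_{N\in\mathbb{N}}\mathbb{E}[\|v_t-u^N(t)\|^p]\to 0$ established just before the statement, via the triangle inequality. Your explicit two-step $\varepsilon$-argument simply spells out the joint limit that the paper leaves implicit.
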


\section{Numerical experiment}\label{Sec:numerical example}
In this section we conduct a numerical experiment to verify the theoretical results. Since the exact solution of the SBE \eqref{Burgers} is not available, we use numerical approximations with very small step size for reference. The errors are measured in the mean-square sense at the endpoint $T=1$, and the expectations are obtained by the Monte--Carlo method computing average over 1000 trajectories. 
It can be seen from Table \ref{table1} that for different $H$, the strong errors between the full-discrete solution and the mild solution become small for more refined step size $\tau$, which verify the strong convergence of our scheme. Furthermore, convergence rates can be observed, which seem to be related to the Hurst parameter $H$, though it does not proved theoretically. 
\begin{table*}[htbp!]
	\centering
	\caption{ Errors and convergence rates of the fully discrete scheme in time direction by taking $N=1000$ ($N$ is the dimension of the spectral Galerkin projection space). Reference solution is obtained by taking $\tau = \frac{1}{1024}$.}\label{table1}
	\renewcommand\arraystretch{1.4} 
	\resizebox{0.6\textwidth}{!}{\begin{tabular}{c|cc|cc|cc}
			\hline
			\multirow{2}{*}{$\tau$} & \multicolumn{2}{c|}{$H=0.9$} & \multicolumn{2}{c|}{$H=0.7$} & \multicolumn{2}{c}{$H=0.6$} \\  
			& error           & rate     & error           & rate     & error          & rate     \\
			\hline
			\raisebox{0.15ex}{1}/\raisebox{-0.35ex}{4}                 & 2.0513e-02      &          & 4.9763e-02      &          & 6.7938e-02     &          \\[1mm]
			\raisebox{0.15ex}{1}/\raisebox{-0.35ex}{8}                 & 1.4643e-02      & 0.49     & 4.0368e-02      & 0.30     & 5.8234e-02  & 0.22     \\[1mm]
			\raisebox{0.15ex}{1}/\raisebox{-0.35ex}{16}                 & 8.5205e-03      & 0.78     & 2.6524e-02      &  0.61  & 4.0781e-02     &  0.51    \\[1mm]
			\raisebox{0.15ex}{1}/\raisebox{-0.35ex}{32}                 & 4.6832e-03      & 0.86     & 1.6631e-02      & 0.67     & 2.7754e-02     & 0.56     \\
			\hline
	\end{tabular}}
\end{table*}

We also plot the trajectories of the reference solutions when $t=1$ with different Hurst parameter $H$. It can be seen from Fig. \ref{figure1} that when $H$ tends to small, the trajectories become rough and the fluctuations increase. This observation meets the theoretical expectation since the small $H$ corresponds to the driven noise with low regularity. 
\begin{figure}[!htp]
	\begin{center}
		\includegraphics[scale = 0.33]{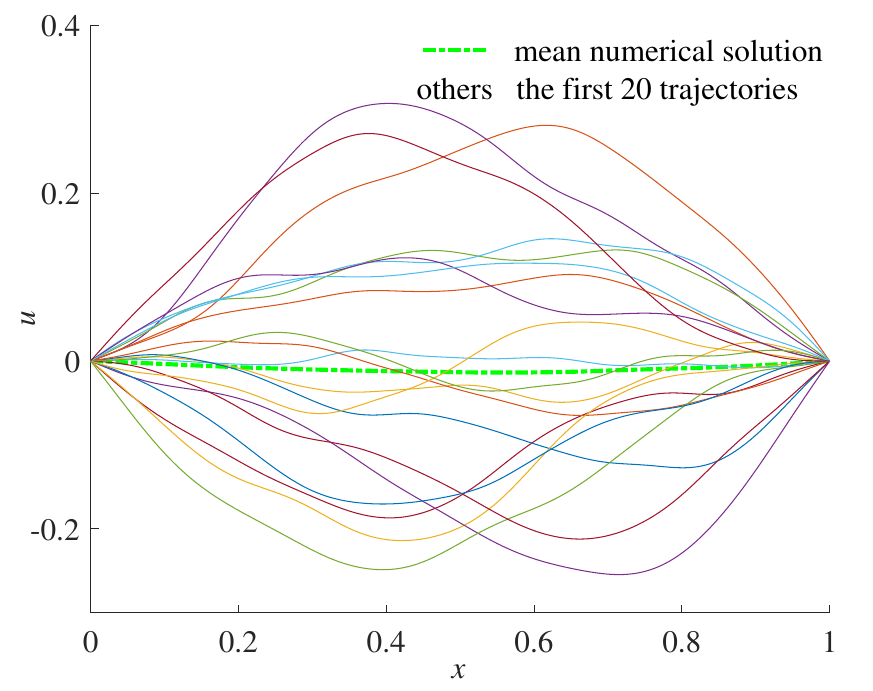} 
		\includegraphics[scale = 0.33]{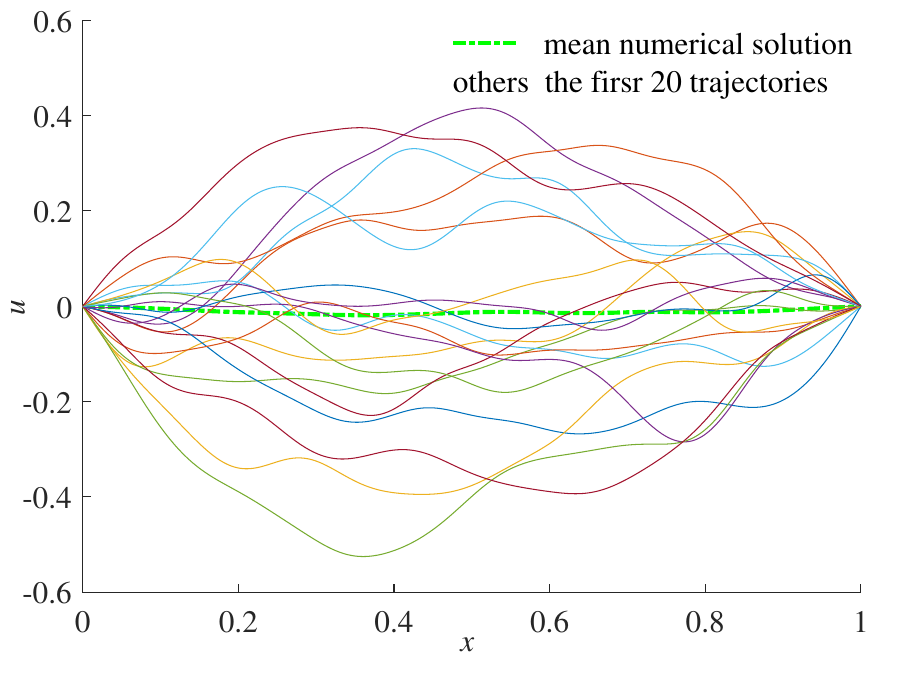}
		\includegraphics[scale = 0.33]{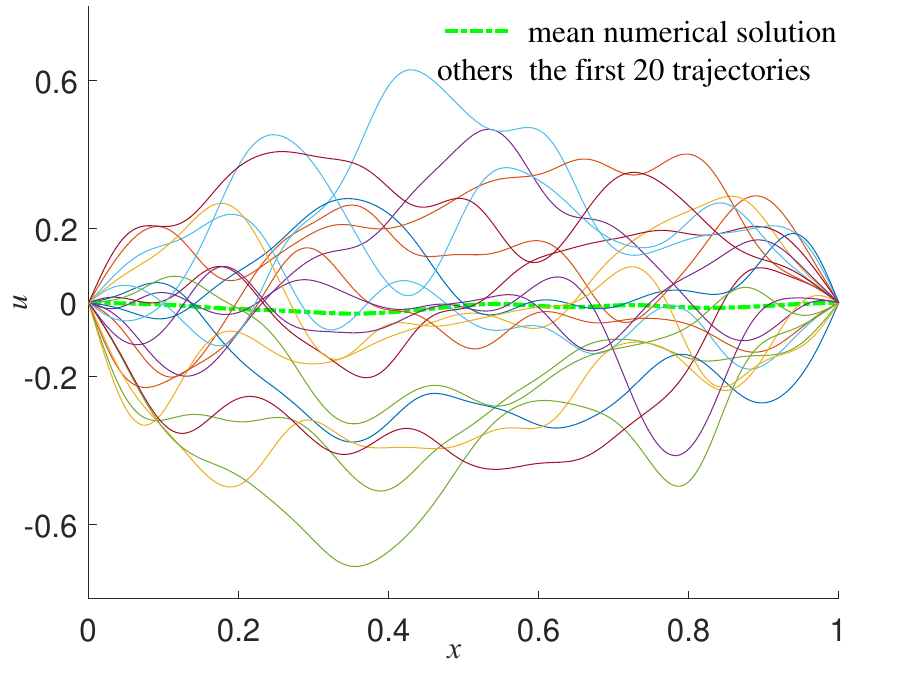}
	\end{center}
	\caption{Left: $H=0.95$. Middle: $H=0.7$. Right: $H=0.55$.  }\label{figure1}
\end{figure}

\section{Summary and discussion}\label{Sec:conclusion}
In this paper, we propose an explicit fully discrete numerical scheme to solve the SBE with cylindrical fBm. Our scheme uses the spectral method for spatial discretization and the tamed exponential integrator method for temporal discretization. To analyze the strong convergence of the scheme, we first prove its convergence in probability by using the stopping time technique. We then show that the full-discrete solution is strongly convergent to the mild solution of the original problem, thanks to the moment boundedness of the full-discrete solution guaranteed by the exponential integrability of the stochastic convolution.

Although the convergence rate is not determined theoretically, it can be observed in numerical tests. The lack of global monotonicity conditions and the low regularity of the cylindrical fBm type noise make it a challenge to determine the strong convergence rate of numerical methods for the SBE. However, for practical applications, numerical solutions obtained by convergent schemes are sufficient to understand the physical meaning behind mathematical models intuitively. Rather than providing the strong convergence rate theoretically, it is more important to demonstrate whether the numerical schemes can preserve the long-time behaviors of the SBE, such as the invariant measure and the ergodicity, which will be the focus of our next effort.

\bibliographystyle{plain}

\end{document}